\newcommand{\I}{\mathcal I}
\newcommand{\T}{\mathcal T}
\newcommand{\F}{\mathcal F}
\newcommand{\id}{{\rm id}}
\newcommand{\V}{\mathcal V}
\newcommand{\Vreg}{V_{\text{reg},h}}
\newcommand{\Vregphi}{V_{\text{reg}}}
\newcommand{\rr}{\mathbb{R}}
\newcommand{\jumpleft}{[\![}
\newcommand{\jumpright}{]\!]}
\newcommand{\jump}[1]{\jumpleft #1 \jumpright}
\newcommand{\spacejump}[1]{\jump{#1}}
\newcommand{\averageleft}{\{\!\!\{}
\newcommand{\averageright}{\}\!\!\}}
\newcommand{\average}[1]{\averageleft #1 \averageright}
\renewcommand{\div}{\textrm{div}\ \!}
\newcommand{\hphi}{\hat \phi_h}
\newcommand{\lin}{{\text{\tiny lin}}}
\newcommand{\Omegalin}{\Omega^{ \lin}}
\newcommand{\Gammalin}{\Gamma^{ \lin}}
\newcommand{\OGamma}{\Omega^{\Gamma}}
\newcommand{\OGammaplus}{\Omega^{\Gamma}_+}
\newcommand{\TGamma}{{\mathcal T}^{\Gamma}}
\newcommand{\TGammaplus}{{\mathcal T}^{\Gamma}_+}
\newcommand{\thetah}{\Theta_h}
\newcommand{\thetahGamma}{\Theta_h^\Gamma}
\newcommand{\PsiGamma}{\Psi^\Gamma}
\newtheorem{remark}{Remark}
\title{Analysis of a high order unfitted finite element method for elliptic interface problems}
\author{Christoph Lehrenfeld\thanks{Institut f\"ur Numerische und Angewandte Mathematik, University of G\"ottingen, D-37083 G\"ottingen,
Germany; email: {\tt lehrenfeld@math.uni-goettingen.de}}
\and Arnold Reusken\thanks{Institut f\"ur
Geometrie und Praktische  Mathematik, RWTH Aachen University, D-52056 Aachen,
Germany; email: {\tt reusken@igpm.rwth-aachen.de}}
}
\begin{document}
\maketitle
\begin{abstract}
In the context of unfitted finite element discretizations the realization of high order methods is challenging due to the fact that the geometry approximation has to be sufficiently accurate. 
We consider a new  unfitted finite element method which achieves a high order approximation of the geometry for domains which are  implicitly described by smooth level set functions. The method is based on  a parametric mapping which transforms a piecewise planar interface reconstruction to a high order approximation. Both components, the piecewise planar interface reconstruction and the parametric mapping are easy to implement. In this paper we present an  a priori error analysis of the method applied to an interface problem. The analysis reveals optimal order error bounds for the geometry approximation and for the finite element approximation,  for arbitrary high order discretization. The theoretical results are confirmed in numerical experiments.
\end{abstract}

\begin{keywords} 
unfitted finite element method,
isoparametric finite element method,
high order finite element methods,
geometry errors,
interface problems,
Nitsche's method
 \end{keywords}
\begin{AMS} 65M06, 65D05
 \end{AMS}

\section[Introduction]{Introduction}\label{sec:introduction}
\subsection{Motivation}
In  recent years there has been a strong increase in the research on  development and analysis of \emph{unfitted finite element methods}. In unfitted finite element methods 
a geometry description which is separated from the computational mesh is used to provide a more flexible handling of the geometry compared to traditional conforming mesh descriptions. 
Often a level set function is used to describe geometries implicitly. 
Recently, significant progress has been made in the construction, analysis and application of methods of this kind, see for instance the papers \cite{burman2014cutfem,burman2012fictitious,fries2010extended,grossreusken07,hansbo2002unfitted,olshanskii2009finite}. Despite these  achievements, the development and rigorous error analysis of \emph{high order} accurate unfitted finite element methods is still challenging.
This is mainly due to the fact that efficient, highly accurate numerical integration on domains that are implicitly described is not straight-forward.
In the recent paper \cite{lehrenfeld15}, a new approach based on isoparametric mappings of the underlying mesh (outlined in section~\ref{secm} below) has been introduced which allows for an efficient, robust and highly accurate numerical integration on domains that are described  implicitly by a level set function. The main contribution of this paper is an error analysis of this method applied to a model interface problem.

\subsection{Literature}
We review the state of the art with respect to higher order geometry approximations in the literature to put the approach proposed in \cite{lehrenfeld15} and analysed in this paper into its context.

For discretizations based on piecewise linear (unfitted) finite elements a numerical integration approach which is second order accurate suffices to preserve the overall order of accuracy. This is why an approximation with piecewise planar geometries is an established approach. Piecewise planar approximations allow for a tesselation into simpler geometries, e.g. simplices, on which standard quadrature rules are applicable. Hence, a robust realization is fairly simple. It can be applied to quadrilateral and hexahedral meshes, as in the marching cube algorithm \cite{lorensen1987marching} as well as to simplex meshes, cf. (among others) \cite[Chapter 4]{lehrenfeld2015diss}, \cite{mayer2009interface}, \cite[Chapter 5]{naerland2014geometrychap5} for triangles and tetrahedra and  \cite{lehrenfeld2015nitsche},\cite[Chapter 4]{lehrenfeld2015diss} for 4-prisms and pentatopes (4-simplices). Such strategies are used in many simulation codes, e.g. \cite{burman2014cutfem,carraro2015implementation,engwer2012dune,DROPS,renard2014getfem++} and are often combined with a geometrical refinement in the quadrature. 
Especially on octree-based meshes this can be done very efficiently \cite{chernyshenko2015adaptive}.
However, by construction, the tesselation approach is only second order accurate and hence limits the overall accuracy when combined with higher order finite elements.

Many unfitted discretizations based on piecewise linear finite elements have a natural extension to higher order finite element spaces, see for instance \cite{bastian2009unfitted,johanssonhigh2013,massjung12,parvizianduesterrank07}. 
It requires, however, additional new techniques to obtain also higher order accuracy when errors due to the geometry approximation and quadrature are taken into account. We review such techniques, starting with approaches from the engineering literature.

One such a technique is based on  moment fitting approaches, resulting in special quadrature rules  \cite{muller2013highly,sudhakar2013quadrature}.
This approach provides (arbitrary) high order accurate integration on implicit domains. However, the construction of these quadrature rules is fairly involved and the positiveness of quadrature weights can not be guaranteed which can lead to stability problems. Furthermore, as far as we know there is no rigorous complexity and error analysis.
For special cases also other techniques have been proposed in the literature to obtain high order accurate approximations of integrals on unfitted domains.
In \cite{saye2015hoquad} a new algorithm is presented which can achieve arbitrary high order accuracy and guarantee positivity of integration weights. However, it  is applicable only on tensor-product elements. The approach is based on the idea of locally interpreting the interface as a graph over a hyperplane. 

In the community of the extended finite element method (XFEM) approaches applying a parametric mapping of the sub-trianguation are often used, e.g., \cite{cheng2010higher,dreau2010studied,fries2015}.  The realization of such strategies is technically involved, especially in three (or higher) dimensions. Moreover, ensuring robustness  (theoretically and practically) of these approaches is difficult. 
Concerning the finite cell method (FCM) \cite{parvizianduesterrank07}, which is also a (high order) unfitted finite element method, we refer to \cite{abedian2013performance} for a comparison of different approaches for the numerical integration on unfitted geometries.

We note that  the aforementioned papers on numerical integration in higher order unfitted finite element methods, which are all from the engineering literature, only present new approaches for the numerical integration and carry out numerical convergence studies. Rigorous error analyses of these methods are not known. 

We now discuss techniques  and corresponding rigorous error analyses  that have been considered in the mathematical literature.
In the recent papers \cite{boiveau2016fictitious,burman2015cut} the piecewise planar approximation discussed before has been used and combined with a correction in the imposition of boundary values (based on Taylor expansions) to allow for higher order accuracy in fictitious domain methods applied to Dirichlet problems. Optimal order a priori error bounds are derived.
For a higher order unfitted discretization of partial differential equations on surfaces, a parametric mapping of a piecewise planar interface approximation has been developed and analyzed in \cite{grande2014highorder}. These are the only papers, that we know of, in which geometry errors in higher order unfitted finite element methods are included in the error analysis.

We also mention relevant numerical analysis papers using low order unfitted or higher order geometrically conforming (``fitted'') methods.
For unfitted piecewise linear  finite element discretizations of partial differential equations on surfaces, errors caused  by geometry approximation are analyzed in \cite{deckelnick2014unfitted,cutDG}.
Error analyses for fitted isoparametric finite elements for the accurate approximation of curved boundaries have been developed in the classical contributions 
\cite{bernardi1989optimal,CiarletHandbook,lenoir1986optimal}. In the context of interface problems and surface-bulk coupling isoparametric fitted techniques are analyzed in  \cite{elliott2012finite}.

Recently, these isoparametric fitted methods have also been used
to adjust a simple background mesh to curved implicitly defined geometries in a higher order accurate fashion \cite{Basting2013228,demlow2009higher,gawlik2014high,omerovicconformal2016}.

The approach analyzed in this paper is similar to above-mentioned approaches \cite{Basting2013228,cheng2010higher,dreau2010studied,gawlik2014high,grande2014highorder,lenoir1986optimal,omerovicconformal2016} in that it is also based on a piecewise planar geometry approximation which is significantly improved using a parametric mapping. 
The important difference compared to \cite{Basting2013228,gawlik2014high,lenoir1986optimal,omerovicconformal2016} is that we consider an \emph{un}fitted discretization and compared to \cite{cheng2010higher,dreau2010studied,grande2014highorder} is that we consider a \emph{parametric mapping of the underlying mesh} rather than the sub-triangulation or only the interface.

The discretization on the higher order geometry approximation that we obtain from a parametric mapping is based on a variant of Nitsche's method \cite{Nitsche71} for the imposition of interface conditions in non-conforming unfitted finite element spaces \cite{hansbo2002unfitted}. To our knowledge, there is no literature in which geometrical errors for isoparamteric or higher order unfitted Nitsche-type discretizations has been considered.

\subsection{The problem setting} 
On a bounded connected polygonal domain $\Omega \subset \Bbb{R}^d$, $d=2,3$, we consider the model interface problem
\begin{subequations} \label{eq:ellmodel}
\begin{align}
- \mathrm{div} (\alpha_i \nabla {u}) &= \, f_i 
\quad \text{in}~~ \Omega_i , ~i=1,2, \label{eq:ellmodel1} \\
\spacejump{{\alpha} \nabla {u} }_{\Gamma} \cdot n_\Gamma &= \, 0, \quad \spacejump{{u}}_{\Gamma} = 0 \quad \text{on}~~\Gamma, \label{eq:ellmodel2} \\
u &= 0 \quad \text{ on } \partial \Omega.
 \end{align}
\end{subequations}
Here, $\Omega_1 \cup \Omega_2= \Omega $ is a nonoverlapping partitioning of the domain, $\Gamma = \bar \Omega_1 \cap \bar \Omega_2$ is the interface and  $\spacejump{\cdot}_{\Gamma}$ denotes the usual jump operator across  $\Gamma$. 
$f_i$, $i=1,2$ are domain-wise described sources. In the remainder we will also use the source term $f$ on $\Omega$ which we define as $f|_{\Omega_i} = f_i$, $i=1,2$.
The diffusion coefficient $\alpha$ is assumed to be piecewise constant, i.e. it has a constant value $\alpha_i$  on each sub-domain $\Omega_i$.
 We assume simplicial triangulations of $\Omega$ which are \emph{not} fitted to $\Gamma$. 
Furthermore, the interface is characterized as the zero level of a given level set function $\phi$.

\subsection{Basic idea of the isoparametric unfitted discretization} \label{secm}
The new idea of the method introduced in \cite{lehrenfeld15} is to construct a parametric mapping of the underlying triangulation based on a \emph{higher order} (i.e. degree at least 2) finite element approximation $\phi_h$ of the level set function $\phi$ which characterizes the interface. This mapping, which is easy to construct, uses  information extracted from the level set function  to map a \emph{piecewise planar} interface approximation of the interface to a \emph{higher order} approximation, cf. the sketch in Figure \ref{fig:idea}.
\begin{figure}[h!]
  % \vspace*{-0.1cm}
  \small
  \begin{center}
    \hspace*{-0.25cm}
    \begin{tabular}{c@{}c@{}c@{}c@{}c}
        \begin{minipage}{0.27\textwidth}
          \includegraphics[width=\textwidth]{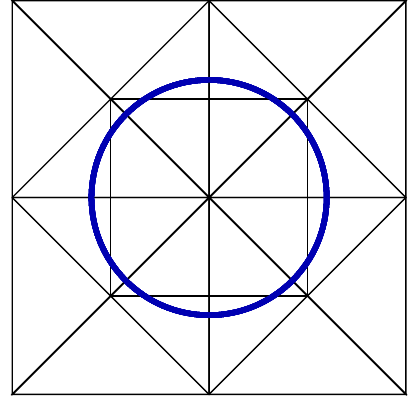} 
        \end{minipage}
      &
        \begin{minipage}{0.038\textwidth}
          \centering +
        \end{minipage}
      &
        \begin{minipage}{0.27\textwidth}
          \includegraphics[width=\textwidth]{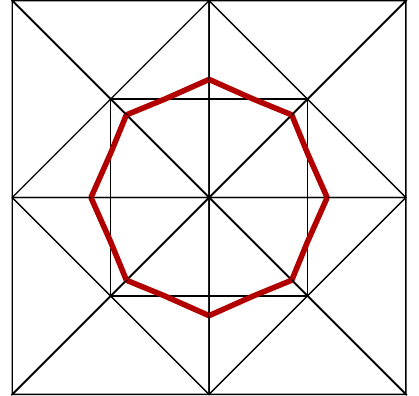} 
        \end{minipage}
      &
        \begin{minipage}{0.052\textwidth}
          \large
          \centering
          $
          \displaystyle
          \stackrel{\Theta_h}{\longrightarrow}
          $
          \small
        \end{minipage}
      &
        \begin{minipage}{0.27\textwidth}
          \includegraphics[width=\textwidth]{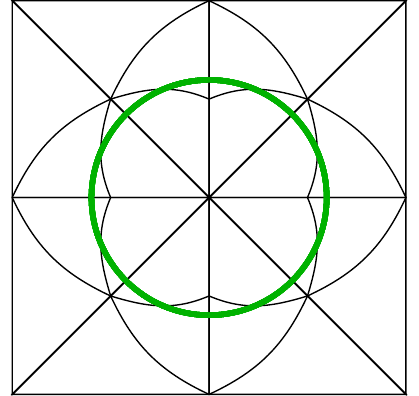} 
        \end{minipage}
    \end{tabular}
  \end{center}
  % \vspace*{-0.05cm}
  \caption{Basic idea of the method in \cite{lehrenfeld15}: The geometry description with the level set function approximation $\phi_h$ is highly accurate but implicit (left). The zero level  $\Gamma^{\text{lin}}$ of the piecewise linear interpolation $\hat{\phi}_h$ has an  explicit representation but is only second order accurate (center). $\Gamma^{\text{lin}}$ is mapped approximately to the implicit interface $\{\phi_h=0\}$ using a  mesh transformation $\Theta_h$, resulting in a highly accurate and explicit representation $\Gamma_h = \Gamma^{\text{lin}} \circ \Theta_h^{-1}$ (right).}
  \vspace*{-0.2cm}
  \label{fig:idea} 
\end{figure}

A higher order isoparametric unfitted finite element method is then obtained by first formulating a higher order discretization with respect to the low order geometry approximation which serves as a reference configuration. The parametric mapping is then applied to improve the accuracy of the geometry approximation. The application of the transformation also induces a change of the finite element space which renders the space an isoparametric finite element space.  
This approach is geometry-based and can be applied to unfitted interface or boundary value problems as well as to partial differential equations on surfaces.  
All volume and interface integrals that occur in the implementation of the method can be formulated in terms of integrals on the reference configuration, i.e. on simplices cut by the piecewise \emph{planar} approximation of the interface. Hence, quadrature is straightforward. 

For the definition of the discretization of the model problem \eqref{eq:ellmodel} on the reference geometry we take an \emph{un}fitted higher order ``cut'' finite element space, where the cut occurs at the piecewise \emph{planar} zero level of the piecewise linear finite element approximation of $\phi_h$. 
Applying the mesh transformation to this space induces an isoparametric unfitted finite element space for the discretization of \eqref{eq:ellmodel}. In the same way as in the seminal paper \cite{hansbo2002unfitted} the continuity of the discrete solution across the (numerical) interface is enforced in a weak sense using Nitsche's method. 
The resulting discrete problem has a unique solution $u_h$ in the isoparametric unfitted finite element space  ($h$ is related in the usual way to the size of the simplices in the triangulation). 

\subsection{Content and structure of the paper}
% {\bf AR: we have to slightly modify this section}\\
In this paper we present a rigorous error analysis of the method introduced in \cite{lehrenfeld15} applied to the interface problem \eqref{eq:ellmodel}. 
The main result of the paper is the discretization error bound in Theorem~\ref{mainthm}. 
As a corollary of that result we obtain for isoparametric unfitted finite elements of degree $k \geq 1$ an  error bound of the form
 \[
   h^{-\frac12} \Vert \spacejump{ \tilde{u}_h}_\Gamma \Vert_{L^2(\Gamma)} + 
   \|u- \tilde{u}_h \|_{H^1(\Omega_1 \cup \Omega_2)} \leq c h^k\big( \|u\|_{H^{k+1}(\Omega_1 \cup \Omega_2)}  + \|f\|_{H^{1,\infty}(\Omega_1 \cup \Omega_2)} \big),
\]
with a constant $c>0$ that is independent of $h$ and the interface position within the computational mesh. $\tilde{u}_h := u_h \circ \Phi_h^{-1}$ is the mapped discrete solution where 
$\Phi_h$ is a smooth transformation which maps from the approximated to the exact domains $\Omega_i,~i=1,2$ and is close to the identity (precise definition given in section~\ref{sectmappsi}).
Here, $\|\cdot\|_{H^1(\Omega_1 \cup \Omega_2)}=\|\cdot\|_{H^{1,2}(\Omega_1 \cup \Omega_2)}$ and  $\|\cdot\|_{H^{1,\infty}(\Omega_1 \cup \Omega_2)}$ are broken Sobolev norms 
with 
$
\|\cdot\|_{W(\Omega_1 \cup \Omega_2)}^2 
= \sum_{i=1,2} \|\cdot\|_{W(\Omega_i)}^2 
$, $W \in \{ H^1, H^{k+1}, H^{1,\infty} \}$.
As far as we know this is the first rigorous \emph{higher order} error bound for an \emph{un}fitted finite element method applied to a problem with an \emph{implicitly} given interface.

The paper is organized as follows. In Section \ref{prelim} we introduce notation and assumptions and define a finite element projection operator. The parametric mapping $\Theta_h$ is introduced in Section \ref{sectmeshtransform}. In Section \ref{unfittedFEM} the isoparametric unfitted finite element method is presented and numerical examples are given. The main contribution of this paper is the error analysis of the  method given in Section \ref{sec:erroranalysis}.
\newpage

\section{Preliminaries} \label{prelim}
\subsection{Notation and assumptions} \label{notassumption}
We introduce notation and assumptions. 
The simplicial triangulation of $\Omega$ is denoted by $\T$ and the standard  finite element space of continuous piecewise polynomials up to degree $k$ by $V_h^k$. 
The nodal interpolation operator in $V_h^k$ is denoted by $I_k$. 

For ease of presentation we assume quasi-uniformity of the mesh, s.t. $h$ denotes a characteristic mesh size with $h\sim h_T:={\rm diam}(T)$, $~T\in\T$.

We assume that the smooth interface $\Gamma$ is the zero level of a smooth level set function $\phi$, i.e.,  $\Gamma= \{\, x \in \Omega~|~\phi(x)=0\,\}$.
This level set function is not necessarily close to a distance function, but has the usual properties of a level set function: 
\begin{equation} \label{LSdef}
\|\nabla \phi(x)\| \sim 1~,~~\|D^2 \phi(x)\| \leq c \quad \text{for all}~x~~ \text{in a neighborhood $U$ of $\Gamma$}.
\end{equation}
We assume that the level set function has the smoothness property  $\phi \in C^{k+2}(U)$. The assumptions on the level set function \eqref{LSdef} imply the following relation, which is fundamental in the analysis below
\begin{equation} \label{eq1}
 |\phi(x+\epsilon \nabla \phi(x)) - \phi(x+\tilde \epsilon \nabla \phi(x))| \sim |\epsilon - \tilde \epsilon| \quad x \in U,
\end{equation}
for $|\epsilon|, |\tilde \epsilon|$ sufficiently small.

As input for the parametric mapping we need an approximation $\phi_h \in V_h^k$ of $\phi$, and we assume that this approximation satisfies the error estimate
\begin{equation} \label{err2}
  \max_{T\in \T} |\phi_h - \phi|_{m,\infty,T \cap U} \lesssim h^{k+1-m},\quad 0 \leq m \leq k+1.
\end{equation}
Here $|\cdot|_{m,\infty,T\cap U}$ denotes the usual semi-norm on the Sobolev space $H^{m,\infty}(T\cap U)$.
Note that  \eqref{err2} holds for the nodal interpolation $\phi_h = I_k \phi$ and implies the estimate
\begin{equation} \label{err1}
 \|\phi_h - \phi\|_{\infty, U} + h\|\nabla(\phi_h - \phi)\|_{\infty, U} \lesssim h^{k+1}. 
\end{equation}
Here and in the remainder we use the notation $\lesssim$ (and $\sim$ for $\lesssim$ and $\gtrsim$), which denotes an  inequality with a constant that is independent of $h$ and of how the interface $\Gamma$ intersects the triangulation $\T$. This  constant may depend on $\phi$ and on the diffusion coefficient $\alpha$, cf.~\eqref{eq:ellmodel1}. In particular, the estimates that we derive are not uniform in the jump in the diffusion coefficient $\alpha_1/\alpha_2$.

The zero level of the finite element function $\phi_h$ (implicitly) characterizes the discrete interface. 
\emph{The piecewise linear nodal interpolation of $\phi_h$ is denoted by $\hphi = I_1 \phi_h$.} Hence, $\hphi(x_i)=\phi_h(x_i)$ at all vertices $x_i$ in the triangulation $\T$. 
The low order geometry approximation of the interface, which is needed in our discretization  method, is the zero level of this function, $\Gammalin := \{ \hphi = 0\}$. 
The corresponding sub-domains are denoted as $\Omegalin_i = \{ \hphi \lessgtr 0 \}$. 
All elements in the triangulation $\T$ which are cut by $\Gammalin$ are collected in the set $\TGamma := \{T \in \T, T \cap \Gammalin \neq \emptyset \}$. The corresponding domain is $\OGamma := \{ x \in T, T\in \TGamma\}$. The extended set which includes all neighbors that share at least one vertex with elements in $\TGamma$ is $\TGammaplus := \{T \in \T, T\cap \OGamma \neq \emptyset \}$ with the corresponding domain $\OGammaplus := \{ x \in T, T \in \TGamma\}$.
% The set of all faces of all simplices $T \in \TGammaplus$ ($T \in \TGamma$) is denoted by $\FGammaplus$ ($\FGamma$) and $\spacejump{\cdot}_F$ denotes the usual jump of a quantity across $F$.

\subsection{Projection operator onto the finite element space $V_h^k(\Omega^\Gamma)^d$} \label{sec:proj}
% \todo[inline]{This section needs an update!}
In the construction of the isoparametric mapping $\Theta_h$ we need a projection step from a function which is piecewise polynomial but discontinuous (across element interfaces) to the space of continuous finite element functions. 
Let $C(\TGamma) = \bigoplus\limits_{T\in \TGamma} C(T)$ and $V_h^k(\OGamma) := V_h^k|_{\OGamma}$. We introduce a projection operator $P_h^\Gamma: C(\TGamma)^d \rightarrow V_h^k(\OGamma)^d$. 
The projection operator relies on a nodal representation of the finite element space $V_h^k(\OGamma)$.
%However, similar operators can also be defined for different bases.
The set of finite element nodes $x_i$ in $\TGamma$ is denoted by $N(\TGamma)$, and $N(T)$ denotes the set of finite element nodes associated to $T \in \TGamma $. All elements $T \in \TGamma$ which contain the same finite element node $x_i$ form the set denoted by $\omega(x_i)$: 
\begin{align*}
\omega(x_i) & := \{ T \in \TGamma | x_i \in N(T) \}, ~ x_i \in N(\TGamma).
\end{align*}
For each finite element node we define the local average as 
\begin{equation} \label{average}
  A_{x_i}(v):= \frac{1}{|\omega(x_i)|} \sum_{T \in \omega(x_i)} v_{|T}(x_i), ~~x_i \in N(\TGamma).
\end{equation}
where $|\cdot|$ denotes the cardinality of the set $\omega(x_i)$.
The projection operator $P_h^\Gamma: C(\TGamma)^d \to V_h^k(\OGamma)^d$ is defined as
\begin{equation*} 
P_h^\Gamma v:= \sum_{x_i \in N(\TGamma)} A_{x_i}(v) \, \psi_i, \quad v \in C(\TGamma)^d,
\end{equation*}
where $\psi_i$ is the nodal basis function corresponding to $x_i$. This is a simple and well-known projection operator considered also in e.g., \cite[Eqs.(25)-(26)]{oswald} and \cite{ernguermond15}.
Note that 
\begin{equation} P_h^\Gamma w = I_k w ~~\forall~ w \in C(\OGamma)^d, \quad  \|P_h^\Gamma w\|_{\infty,\Omega} \lesssim \max_{T \in\TGamma} \| w\|_{\infty,T}~~\forall~ w \in C(\TGamma)^d.
\end{equation} 

\section{The isoparametric mapping $\Theta_h$} \label{sectmeshtransform}
In this section we introduce the transformation $\Theta_h \in (V_h^k)^d$ which is a bijection on $\Omega$ and satisfies $\Theta_h = \id$ on $\Omega \setminus \OGammaplus$. This mapping is constructed in two steps. First a local mapping $\Theta_h^\Gamma$, which is defined on $\OGamma$, is derived and this local mapping is then extended to the whole domain. We also need another bijective mapping on $\Omega$, denoted by $\Psi$, which is constructed in a similar two-step procedure. This mapping $\Psi$ is needed in the error analysis and in the derivation of important properties of $\Theta_h$.  In the higher order finite element method, which is presented in Section~\ref{unfittedFEM},  the mapping $\Theta_h$ is a key component, and the mapping $\Psi$ is \emph{not} used.  In this section we introduce both $\Theta_h$ and $\Psi$, because the construction of both mappings has strong similarities. Since the construction is not standard and consists of a two-step procedure, we outline our approach, cf. Fig.~\ref{fig:trafos}:

%An important property of the transformation that is required to obtain optimal approximation properties of isoparametric finite element space is a uniform bound on higher derivatives of the transformation, $\Vert \Theta_h \Vert_{m,\infty,\Omega} \lesssim 1$, $m \leq k$, cf. \cite{ciarlet1972interpolation}.
\begin{itemize}
 \item We start with the relatively simple definition of the local mapping $\Psi^\Gamma$, which has the property $\Psi^\Gamma(\Gammalin)=\Gamma$ (Section~\ref{subsctionconstr0}), but is only defined in $\Omega^\Gamma$. In general this mapping can not (efficiently) be constructed in practice.
 \item The definition of $\Psi^\Gamma$ is slightly modified, to allow for a computationally efficient construction, which then results in the local isoparametric mapping $\thetahGamma$ with the property $\thetahGamma(\Gammalin)=\Gamma_h \approx \Gamma$ (Section~\ref{subsctionconstr1}). $\thetahGamma$ is only defined in $\Omega^\Gamma$.
 \item The extension of the mappings $\Psi^\Gamma$ and $\thetahGamma$ is based on the same general procedure, which is derived  from extension techniques that are standard in the literature on isoparametric finite element methods, cf. \cite{lenoir1986optimal, bernardi1989optimal}. This general procedure is explained in Section~\ref{sectextension}.
 \item The general extension procedure is applied to the mapping $\Psi^\Gamma$, resulting in the global bijection $\Psi$. Important properties of $\Psi$ are derived (Section~\ref{sec:globaltrafoPsi}).
 \item Finally the general extension procedure is applied to the mapping $\thetahGamma$, resulting in the global bijection $\thetah$. Important properties of $\thetah$ are derived (Section~\ref{sec:globaltrafo}).
\end{itemize}

\begin{figure}[h!]
  \vspace*{-0.3cm}
  \begin{center}
    \includegraphics[width=0.86\textwidth]{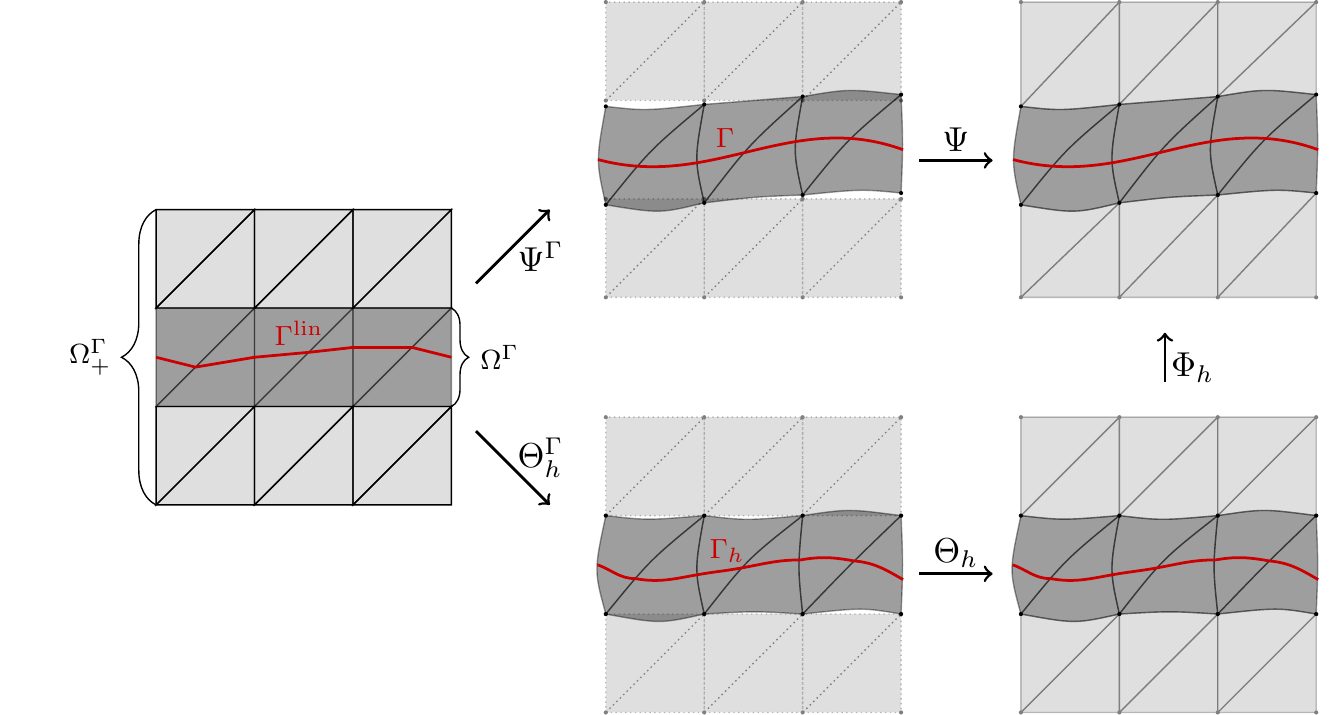}
  \end{center}
  \vspace*{-0.2cm}
   \caption{Sketch of different mesh transformations $\Psi^\Gamma$, $\thetahGamma$, $\Psi$, $\Theta_h$ and $\Phi_h := \Psi \circ \Theta_h^{-1}$.}
   \label{fig:trafos}
 \end{figure}

\subsection{Construction of $\Psi^\Gamma$}\label{subsctionconstr0}

 We introduce  the search direction $G:=\nabla \phi$ 
and a  function $d: \OGamma \to \rr$ defined as follows:  $d(x)$ is the (in absolute value) smallest number such that
\begin{equation} \label{cond1}
  \phi(x + d(x) G(x))=\hat \phi_h(x)  \quad \text{for}~~x \in \OGamma.
\end{equation}
(Recall that $\hat\phi_h$ is the piecewise linear nodal interpolation of $\phi_h$.)
We summarize a few properties of the function $d$.
\begin{lemma} \label{lem6}
For $h$ sufficiently small, the relation \eqref{cond1} defines a unique $d(x)$ and $d \in C(\OGamma) \cap H^{1,\infty}(\OGamma) \cap C^{k+1}(\TGamma)$. Furthermore there holds
\begin{subequations}
\begin{align}
        |d(x_i)| & \lesssim h^{k+1} \quad \text{for all {\rm vertices} $x_i$ of}~ T \in \TGamma ,\label{resd4a0}\\
       \|d\|_{\infty,\OGamma} &\lesssim h^2, \label{resd4a} \\
         \|d\|_{H^{1,\infty}(\OGamma)} &\lesssim h, \label{resd4b} \\
      \max_{T \in \TGamma} \|d\|_{H^{l,\infty}(T)} &\lesssim 1,~ ~\text{for}~~ l \leq k+1. \label{resd4c}
\end{align}
\end{subequations}
\end{lemma}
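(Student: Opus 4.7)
The plan is to study the defining equation $\phi(x + d\,G(x)) = \hphi(x)$ as an implicit equation for $d$ by freezing $x$ and looking at the one-dimensional map $\varphi_x(\epsilon):=\phi(x+\epsilon G(x))$. By \eqref{eq1} this map is strictly monotone with $|\varphi_x(\epsilon)-\varphi_x(\tilde\epsilon)|\sim|\epsilon-\tilde\epsilon|$ for small arguments, so existence and uniqueness of the smallest-in-modulus solution $d(x)$ will follow as soon as I know $|\phi(x)-\hphi(x)|$ is sufficiently small. I would estimate $\phi-\hphi$ by the splitting $\phi-\hphi=(\phi-I_1\phi)+I_1(\phi-\phi_h)$: the first term is $O(h^2)$ by the standard linear nodal interpolation estimate (using $\phi\in C^{k+2}(U)$), and the second is $O(h^{k+1})$ by the stability of $I_1$ in $L^\infty$ together with~\eqref{err1}. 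This immediately yields~\eqref{resd4a} via $|d(x)|\sim|\hphi(x)-\phi(x)|$. For the vertex estimate~\eqref{resd4a0} I would use that $\hphi(x_i)=\phi_h(x_i)$ at any vertex, so $\hphi(x_i)-\phi(x_i)=\phi_h(x_i)-\phi(x_i)$, which is $O(h^{k+1})$ directly from~\eqref{err1}.

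For regularity I would apply the implicit function theorem to $F(x,\epsilon):=\phi(x+\epsilon G(x))-\hphi(x)$. On the interior of a single $T\in\TGamma$, $\hphi$ is affine and $\phi, G$ are $C^{k+1}$, so $F$ is jointly $C^{k+1}$; since $\partial_\epsilon F(x,d(x))=\nabla\phi(x+dG)\cdot G(x)\sim 1$ for small $h$, $d$ is $C^{k+1}$ on each $T$. Global continuity on $\OGamma$ follows because $\hphi$ is continuous across element interfaces and the smallest-modulus solution of $F(x,d)=0$ depends continuously on $(x,\hphi(x))$. Then differentiating the defining relation gives the identity
\begin{equation*}
\partial_j d(x)=\frac{\partial_j\hphi(x)-\nabla\phi(x+dG)\cdot e_j - d\,\nabla\phi(x+dG)\cdot\partial_j G(x)}{\nabla\phi(x+dG)\cdot G(x)}.
\end{equation*}
The denominator is $\sim 1$; in the numerator, Taylor expansion yields $\nabla\phi(x+dG)=\nabla\phi(x)+O(|d|)=\nabla\phi(x)+O(h^2)$, while $\|\nabla(\hphi-\phi)\|_{\infty,T}\lesssim h$ follows from $\nabla\hphi-\nabla\phi=\nabla I_1(\phi_h-\phi)+(\nabla I_1\phi-\nabla\phi)$ using $H^{1,\infty}$-stability of $I_1$ on each element together with~\eqref{err1}. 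This gives $\|\nabla d\|_{\infty,T}\lesssim h$ on each element, and combined with the global continuity from the implicit function theorem yields $d\in H^{1,\infty}(\OGamma)$ with $\|d\|_{H^{1,\infty}(\OGamma)}\lesssim h$, establishing~\eqref{resd4b}. The higher-derivative bound~\eqref{resd4c} will be obtained by differentiating the implicit relation repeatedly: since $\hphi$ is affine on each $T$ its derivatives of order $\geq 2$ vanish, so the expression for $\nabla^l d$ depends only on derivatives of $\phi$ and $G$ up to order $l$, all bounded by constants under $\phi\in C^{k+2}(U)$.

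The main obstacle I foresee is the $H^{1,\infty}$ step: one must verify that $\nabla\hphi-\nabla\phi=O(h)$ in $L^\infty$ element by element even though $\hphi=I_1\phi_h$ is the piecewise linear interpolation of a high order finite element function, not of a smooth function. The cleanest route, which I sketched above, is to split off $I_1(\phi_h-\phi)$ and then invoke inverse estimates or direct nodal comparison (each nodal value of $\phi_h-\phi$ is $O(h^{k+1})$ on a patch of diameter $h$, giving an $O(h^k)$ contribution to the gradient of $I_1(\phi_h-\phi)$), leaving $\nabla(I_1\phi-\phi)=O(h)$ from the standard interpolation estimate for smooth $\phi$. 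A secondary technical point is patching the local $C^{k+1}$ regularity on each element into global $C(\OGamma)$-regularity; this relies on $\hphi$ being globally continuous and on the selection of the smallest-modulus root, for which uniqueness on a neighborhood of $\OGamma$ of uniform size (independent of $h$) is needed and follows from~\eqref{eq1}.
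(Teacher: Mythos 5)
Your proposal is correct and follows essentially the same route as the paper's proof: a one-dimensional monotonicity argument based on \eqref{eq1} for existence/uniqueness and \eqref{resd4a}, the nodal identity $\hphi(x_i)=\phi_h(x_i)$ combined with \eqref{err1} for \eqref{resd4a0}, implicit differentiation of \eqref{cond1} together with $\|\nabla\hphi-\nabla\phi\|_{\infty,\OGamma}\lesssim h$ for \eqref{resd4b}, and the implicit function theorem with repeated differentiation for \eqref{resd4c}. Your explicit splitting $\phi-\hphi=(\phi-I_1\phi)+I_1(\phi-\phi_h)$ just makes precise a step the paper asserts directly from \eqref{err1}.
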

\begin{proof} 
For $|\alpha| \leq \alpha_0 h$, with a fixed $\alpha_0 >0 $, we introduce for a fixed $x \in \OGamma$  the continuous function
\[
 g(\alpha):=\phi(x+ \alpha G(x))- \hat \phi_h(x).
\] 
From \eqref{err1} one obtains $\|\hat \phi_h - \phi\|_{\infty,\OGamma} \lesssim h^2$. Using this, a triangle inequality and \eqref{LSdef} we get
\begin{equation} \label{lpp}
 g(\alpha)= \alpha \|\nabla \phi(x)\|_2^2 + \mathcal{O}(h^2).
\end{equation}
Hence, there exists $h_0 >0$ such that for all $0< h\leq h_0$ and all $x\in \OGamma$ the equation $g(\alpha)=0$ has a unique solution $\alpha =:d(x)$ in $[-\alpha_0 h,\alpha_0 h]$. The result in \eqref{resd4a} follows from \eqref{lpp}. Using  \eqref{eq1} and \eqref{err1} we get, for a vertex $x_i$ of $ T \in \TGamma$,
\[
 |d(x_i)| \sim |\phi(x_i+d(x_i)G(x_i))-\phi(x_i)|= |\hat\phi_h(x_i)-\phi(x_i)|=|\phi_h(x_i)-\phi(x_i)| \lesssim h^{k+1},
\]
which is the estimate in \eqref{resd4a0}. The continuity of $d$ on $\OGamma$ follows from the continuity of $\phi, G$ and $\hat \phi$.

For \eqref{resd4b} we differentiate \eqref{cond1} and write $y = x + d(x) G(x)$:
\begin{align*}
  \nabla \phi(y) + \nabla d \nabla \phi(y)^T G + d (D G^T)  \nabla \phi(y) & = \nabla \hphi(x).
\end{align*}
This yields
\begin{align*}
  \nabla d ( \Vert \nabla \phi \Vert_2^2 + \mathcal{O}(h^2)) + \mathcal{O}(h^2) & = \nabla \hphi(x) - \nabla \phi(y) 
 = \nabla \hphi(x) - \nabla \phi(x) +  \mathcal{O}(h^2),
\end{align*}
which, in combination with $\|\nabla \hphi - \nabla \phi\|_{\infty,\OGamma} \lesssim h$,  implies $\Vert \nabla d(x)  \Vert \lesssim h$ a.e. in $\OGamma$. 

For $T\in \TGamma$ we consider the function $F(x,y) = \phi(x+y G(x))-\hphi(x),~(x,y)\in T \times (-\alpha_0 h,\alpha_0 h)$. The function $d(x)=y(x)$ solves the implicit equation $F(x,y(x))=0$ on $T$. We recall the regularity assumption $\phi\in C^{k+2}(U)$. From the implicit function theorem we know that $y \in C^{k+1}(T)$, because $F \in C^{k+1}\big(T \times (- \alpha_0 h, \alpha_0 h)\big)$.  From $y \in C^{k+1}(\TGamma)$ and $y \in C(\OGamma)$ we conclude $y \in H^{1,\infty}(\OGamma)$. Note that $D^\alpha \hat \phi_h=0 $ for $|\alpha| \geq 2$ and $\|D^\alpha \hat \phi_h\|_{\infty,T} \lesssim \| \phi \|_{H^{2,\infty}(T)}$ for $|\alpha| \leq 1$. Hence,
\begin{equation} \label{lp}
 \|D^\alpha_{(x,y)}F\|_{\infty, T \times (\-\alpha_0h, \alpha_0 h)} \lesssim \|\phi\|_{H^{l+1,\infty}(U)}\quad \text{for}~|\alpha| \leq k+1.
\end{equation}
 Differentiating $F(x,y(x))=0$ yields
\begin{equation}\label{eq:impl:1}
  D^\alpha y(x) = -D_y F(x,y(x))^{-1} D_x^{\alpha} F(x,y(x)) = -A(x) D_x^{\alpha} F(x,y(x)),  \quad |\alpha|=1,
\end{equation}
with $A(x) = S(x)^{-1}$, $S(x) = D_y F(x,y(x))= \nabla \phi(x+yG(x))^T\nabla \phi(x) \in [c_0,c_1]$ with $c_0, c_1 > 0$ independent of $h,x,T$.   Differentiating $S(x) A(x) = I$ we get 
\begin{equation}\label{eq:impl:2}
  D^\alpha A(x) = - A(x)^2 D^\alpha S(x), \quad |\alpha|=1. 
\end{equation}
From the relations \eqref{eq:impl:1} and \eqref{eq:impl:2} it follows  that $|D^\alpha y(x)|$, $|\alpha| = l$ can be bounded in terms of $\|A(x)\|$ and $|D^\alpha_{(x,y)} F(x,y(x))|$, $|\alpha|\leq l$. Combining this with \eqref{lp} proves \eqref{resd4c}.
\end{proof}
\\[1ex]
Given the function $dG \in C(\OGamma)^d \cap H^{1,\infty}(\OGamma)^d$ we define:
\begin{equation} \label{psi1}
 \PsiGamma(x):= x + d(x) G(x), \quad x \in \OGamma.
\end{equation}
Note that the function $d$ and mapping $\Psi^\Gamma$ depend on $h$, through $\hat \phi_h$ in \eqref{cond1}. We do not show this dependence in our notation.
As a direct consequence of the estimates derived in Lemma~\ref{lem6} and the smoothness assumption $G=\nabla \phi \in C^{k+1}(U)$ we have the following uniform bounds on (higher) derivatives of $\Psi^\Gamma$.
\begin{corollary} \label{lem:boundpsig}
  The following holds:
  \begin{align}
 \|\PsiGamma - \id \|_{\infty,\OGamma} +h \|D\PsiGamma - I\|_{\infty,\OGamma} & \lesssim h^2, \label{fistderPsi}\\
   \max_{T \in \TGamma} \Vert D^l \PsiGamma \Vert_{\infty,T} & \lesssim 1, \quad l \leq  k+1.\label{fistderPsiA}
    % \Vert D^l \PsiGammainv \Vert_{\infty,\PsiGamma(T)} \lesssim 1, \quad l = 1, .., k+1.
  \end{align}
  % \todo[inline]{Do we really need the bounds for the inverse trafo?}
\end{corollary}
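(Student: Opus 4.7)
The plan is to read off all three bounds directly from $\Psi^\Gamma(x) = x + d(x) G(x)$ by combining the pointwise estimates for $d$ in Lemma~\ref{lem6} with the assumed smoothness of $G = \nabla\phi \in C^{k+1}(U)$, so $\|G\|_{H^{l,\infty}(U)} \lesssim 1$ for $l \leq k+1$. No new analytic input is needed; the corollary is essentially an exercise in the Leibniz rule applied to the product $dG$.

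For the zeroth-order bound in \eqref{fistderPsi}, I would write $\Psi^\Gamma - \id = dG$ and estimate
\[
\|\Psi^\Gamma - \id\|_{\infty,\OGamma} \leq \|d\|_{\infty,\OGamma}\,\|G\|_{\infty,\OGamma} \lesssim h^2
\]
using \eqref{resd4a}. For the first-order piece of \eqref{fistderPsi}, I would differentiate to get $D\Psi^\Gamma - I = G \otimes \nabla d + d\, DG$ and apply \eqref{resd4a} and \eqref{resd4b} together with boundedness of $G$ and $DG$ to obtain $\|D\Psi^\Gamma - I\|_{\infty,\OGamma} \lesssim h + h^2 \lesssim h$, which yields the claimed $h$-weighted estimate.

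For \eqref{fistderPsiA}, I would work elementwise on $T \in \TGamma$, where $d \in C^{k+1}(T)$ by Lemma~\ref{lem6}, and use the Leibniz rule
\[
D^l(dG) = \sum_{m=0}^{l} \binom{l}{m}\, D^m d \cdot D^{l-m} G.
\]
Applying \eqref{resd4c} to bound each $\|D^m d\|_{\infty,T}$ by a constant for $m \leq k+1$, and the $C^{k+1}$-regularity of $G$ to bound each $\|D^{l-m} G\|_{\infty,T}$, gives $\|D^l(dG)\|_{\infty,T} \lesssim 1$ for $2 \leq l \leq k+1$. Since $D^l \id = 0$ for $l \geq 2$ and the case $l=1$ was already handled above, \eqref{fistderPsiA} follows. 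There is no real obstacle here: the only thing to be careful about is that the higher-derivative bound \eqref{resd4c} is stated elementwise (since $d$ is only piecewise smooth across elements of $\TGamma$), which is why \eqref{fistderPsiA} is also formulated elementwise with the $\max_{T \in \TGamma}$.
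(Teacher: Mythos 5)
Your proposal is correct and matches the paper's intent exactly: the paper gives no written proof, stating the corollary as a ``direct consequence'' of Lemma~\ref{lem6} and the smoothness $G=\nabla\phi\in C^{k+1}(U)$, and your argument simply spells out that consequence via the product structure $\Psi^\Gamma-\id=dG$, the bounds \eqref{resd4a}--\eqref{resd4c}, and the Leibniz rule applied elementwise. Nothing further is needed.
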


\subsection{Construction of $\thetahGamma$} \label{subsctionconstr1}
The construction of $\thetahGamma$ consists of two steps. In the first step we introduce a discrete analogon of $\Psi^\Gamma$ defined in \eqref{psi1}, denoted by $\Psi_h^\Gamma$. 
% On each element $T$ we will have that $\Vert \Psi_h \Vert_{m,\infty,T} \lesssim 1$, $m \leq k$, $T \in \TGamma$.
Based on this $\Psi_h^\Gamma$, which can be discontinuous across element interfaces,  we obtain a continuous transformation $\thetahGamma \in C(\OGamma)^d$ by averaging with the projection operator $P_h^\Gamma$ from Section \ref{sec:proj}.

For the construction of $\Psi_h^\Gamma$ we need an efficiently  computable good approximation of $G=\nabla \phi$ on $\TGamma$.
For this we consider the following two options
  \begin{equation} \label{eq:gh}
    G_h(x) = \nabla \phi_h(x), \quad \text{or}~~G_h(x) = (P_h^\Gamma \nabla \phi_h)(x), \quad x \in T \in \TGamma.
  \end{equation}
\begin{lemma} \label{lem2} Both for  $G_h = P_h^\Gamma \nabla \phi_h$ and $G_h= \nabla \phi_h$ the estimate
\begin{equation}\label{eqlem2}
  \|G_h - G\|_{\infty, \OGamma} + h \max_{T\in \TGamma} \| D (G_h - G) \|_{\infty, T} \lesssim h^{k}
\end{equation}
holds with $G = \nabla \phi$.
\end{lemma}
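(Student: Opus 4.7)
The plan is to treat the two options in \eqref{eq:gh} separately, using in both cases that $\phi\in C^{k+2}(U)$ implies $G=\nabla\phi\in C^{k+1}(U)$ and that for $h$ small enough $\OGamma\subset U$, so \eqref{err2} applies on every $T\in\TGamma$.

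\textbf{Case $G_h=\nabla\phi_h$.} This is immediate from \eqref{err2}. Taking $m=1$ and $m=2$ gives respectively
\[
\|\nabla(\phi_h-\phi)\|_{\infty,T}\lesssim h^k,\qquad \|D^2(\phi_h-\phi)\|_{\infty,T}\lesssim h^{k-1},
\]
for every $T\in\TGamma$, which are exactly the two contributions to $\|G_h-G\|_{\infty,\OGamma}+h\|D(G_h-G)\|_{\infty,T}\lesssim h^k$.

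\textbf{Case $G_h=P_h^\Gamma\nabla\phi_h$.} Since $\nabla\phi\in C(\OGamma)^d$, by the projection property $P_h^\Gamma\nabla\phi=I_k\nabla\phi$. This gives the splitting
\[
G_h-G=P_h^\Gamma(\nabla\phi_h-\nabla\phi)+(I_k\nabla\phi-\nabla\phi)=:E_1+E_2.
\]
For $E_1$, I would use the $L^\infty$ stability of $P_h^\Gamma$ (an immediate consequence of the nodal average definition \eqref{average} and $\sum_i\|\psi_i\|_{\infty,T}\lesssim 1$): on any $T\in\TGamma$,
\[
\|E_1\|_{\infty,T}\lesssim \max_{T'\in\TGamma}\|\nabla(\phi_h-\phi)\|_{\infty,T'}\lesssim h^k,
\]
by \eqref{err2} with $m=1$. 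Since $E_1|_T$ is a polynomial of degree $\leq k$ on a simplex of diameter $\sim h$, the standard inverse inequality yields
\[
\|DE_1\|_{\infty,T}\lesssim h^{-1}\|E_1\|_{\infty,T}\lesssim h^{k-1}.
\]
For $E_2$, since $\nabla\phi\in C^{k+1}(T)$ with uniformly bounded derivatives, classical nodal interpolation estimates give
\[
\|E_2\|_{\infty,T}\lesssim h^{k+1},\qquad \|DE_2\|_{\infty,T}\lesssim h^{k}.
\]
Adding the two contributions produces $\|G_h-G\|_{\infty,\OGamma}\lesssim h^k$ and $\|D(G_h-G)\|_{\infty,T}\lesssim h^{k-1}$, i.e.\ the claim.

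The only point requiring some care is the $L^\infty$ stability $\|P_h^\Gamma v\|_{\infty,T}\lesssim\max_{T'\in\TGamma,\,\bar T'\cap\bar T\neq\emptyset}\|v\|_{\infty,T'}$; this follows directly from the definition of the nodal averages $A_{x_i}$ together with the fact that for each $x_i\in N(T)$ the neighbourhood $\omega(x_i)$ is a finite set of elements of $\TGamma$ sharing the vertex $x_i$, whose cardinality is uniformly bounded by shape regularity. This is the step where the averaging structure is essential; once it is in place the rest is a split into a projector-stability term plus a standard nodal interpolation error, combined with a single inverse inequality to handle the first-derivative bound.
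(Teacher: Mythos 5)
Your proposal is correct and follows essentially the same route as the paper: the case $G_h=\nabla\phi_h$ is read off from \eqref{err2}/\eqref{err1}, and for $G_h=P_h^\Gamma\nabla\phi_h$ both arguments use the identity $P_h^\Gamma\nabla\phi=I_k\nabla\phi$ to split off $E_1=G_h-I_k\nabla\phi$ (bounded via the $L^\infty$-stability of $P_h^\Gamma$ stated after its definition) and $E_2=I_k\nabla\phi-\nabla\phi$ (standard interpolation), with an inverse inequality on the polynomial $E_1$ for the derivative term. The only cosmetic difference is that the paper reaches the inverse-inequality step by a chain of triangle inequalities rather than by naming the decomposition up front.
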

\begin{proof}
 For $G_h= \nabla \phi_h$ the result is a direct consequence of  \eqref{err1}. For  $G_h = P_h^\Gamma \nabla \phi_h$ we use 
\begin{align*}
\Vert P_h^\Gamma \nabla \phi_h - \nabla \phi \Vert_{\infty,\OGamma} \leq \Vert P_h^\Gamma \nabla \phi_h - I_k \nabla \phi \Vert_{\infty,\OGamma} + \Vert \nabla \phi - I_k \nabla \phi \Vert_{\infty,\OGamma}, \\
  \Vert P_h^\Gamma \nabla \phi_h - I_k \nabla \phi \Vert_{\infty,\OGamma} = \Vert P_h^\Gamma (\nabla \phi_h - \nabla \phi) \Vert_{\infty,\OGamma} \lesssim \max_{T\in\TGamma} \Vert \nabla \phi_h - \nabla \phi \Vert_{\infty,T},
\end{align*}
and \eqref{err1} to estimate $  \|G_h - G\|_{\infty, \OGamma} \lesssim h^k$. For the derivative we  make use of \eqref{err2} and \eqref{err1}:
\begin{align*}
 \Vert D (G_h-G) \Vert_{\infty,T} 
& \leq \Vert D(G_h - I_k G) \Vert_{\infty,T} +  \Vert D(G - I_k G) \Vert_{\infty,T} \\
& \lesssim \frac{1}{h} \Vert G_h - I_k G \Vert_{\infty,T} +  h^{k-1} \\
& \lesssim \frac{1}{h} \Vert G_h - G \Vert_{\infty,T} + \frac{1}{h} \Vert G - I_k G \Vert_{\infty,T} + h^{k-1} \lesssim h^{k-1},
\end{align*}
with estimates that are uniform in $T \in \TGamma$. 
\end{proof}
\ \\
\begin{remark} \rm We comment on the options in \eqref{eq:gh}.
An important property of the search direction $G_h$ is the proximity to a \emph{continuous} vector field $G$ which describes the direction with respect to which the interface $\Gamma$ can be interpreted as a graph on $\Gammalin$: For each $x \in \Gamma$ there exists a unique $y \in \Gammalin$ and $d\in \rr$, s.t. $x = y + d\, G(x)$.
The choices in \eqref{eq:gh} are accurate approximations of $G(x) = \nabla \phi$ which (locally) have the graph property for sufficiently smooth interfaces $\Gamma$.
Despite the discontinuities across element interfaces, the discrete search direction $G_h = \nabla \phi_h$ is a reasonable choice as the distance to the continuous search direction $G=\nabla \phi$ is sufficiently small, cf. Lemma \ref{lem2}.
\end{remark}\\[1ex]
% We introduce an element of $V_h^k(\OGammaplus):= {\rm span}\{\, \psi_i~|~x_i \in N(\TGammaplus)\, \}$ which interpolates  between $\hat \phi_h$ and $\phi_h$ in the following sense:
% \begin{equation} \label{defI}
%  I(\hat \phi_h,\phi_h) := \sum_{x_i \in N(\TGamma)} \hphi(x_i) \psi_i + \sum_{x_i \in N(\TGammaplus)\setminus N(\TGamma)} \phi_h(x_i) \psi_i. 
% \end{equation}
% Note that $I(\hat \phi_h,\phi_h)$ is uniquely defined and $ I(\hat \phi_h,\phi_h)=\hat \phi_h$ on $\OGamma$,  $I(\hat \phi_h,\phi_h)= \phi_h $ on $\partial \OGammaplus$. 
 Let $\mathcal{E}_T \phi_h$ be the polynomial extension of $\phi_h|_T$. 
We define  a function $d_h: \TGamma \to [-\delta,\delta]
$, with $\delta > 0$ sufficiently small, as follows: $d_h(x)$ is the (in absolute value) smallest number such that  
  \begin{equation} \label{eq:psihmap}
    \mathcal{E}_T \phi_h(x + d_h(x) G_h(x)) = \hat \phi_h, \quad \text{for}~~ x\in  T \in \TGamma.
  \end{equation}
Clearly, this $d_h(x)$ is a ``reasonable'' approximation of the steplength $d(x)$ defined in \eqref{cond1}.
% \begin{equation} \label{eq:psih}
%   \Psi_h(x) := x + d_h(x) G_h(x) \quad \text{for}~x \in T \in \TGamma,
% \end{equation}
%which approximates the function $\Psi^Gamma$ defined in \eqref{psi1}.
We summarize a few properties of the function $d_h$. % and $\Psi_h$.
\begin{lemma} \label{propertiesdh}
 For $h$ sufficiently small, the relation \eqref{eq:psihmap} defines a unique $d_h(x)$ and $d_h \in C^\infty(\TGamma)$. Furthermore:
\begin{subequations}
\begin{align}
 d_h(x_i) & =0 \quad \text{for all {\rm vertices} $x_i$ of $T \in \TGamma$}, \label{resd1} \\
 %  d_h(x_i) & =0  \quad \text{for all}~x_i \in N(\TGamma) \setminus N(\TGamma),  \label{resd2} \\
 % d_h(x) & =0\quad   \text{for all}~x \in\partial \OGammaplus, \label{resd3} \\
 \max_{T \in \TGamma} \|d_h\|_{\infty,T} & \lesssim h^2, \label{resd4} \\
  \max_{T \in \TGamma} \|\nabla d_h\|_{\infty,T} & \lesssim 1, \label{resd6}
%   \\
%    & \text{\bf we do not need the two results below}\\
%  \max_{T\in\TGamma} \Vert d_h \Vert_{H^{l,\infty}(T)} \lesssim 1, ~ & ~ \max_{T\in\TGamma} \Vert \Psi_h \Vert_{H^{l,\infty}(T)} \lesssim 1, \qquad l \leq k+1,
% \label{eq:psihbound}\\
%  \max_{F \in \FGamma} \Vert \jump{d_h}_F \Vert_{\infty,F} \lesssim h^{k+1}, ~ & ~
%  \max_{F \in \FGamma} \Vert \jump{\Psi_h}_F \Vert_{\infty,F} \lesssim h^{k+1},
% \label{eq:jumpbound}
\end{align}
\end{subequations}
 \end{lemma}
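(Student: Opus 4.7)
The plan is to mirror the proof of Lemma~\ref{lem6} but with $\mathcal{E}_T\phi_h$ and $G_h$ in place of $\phi$ and $G$. On each $T\in\TGamma$ the function
\[F(x,\alpha) := \mathcal{E}_T\phi_h(x + \alpha G_h(x)) - \hphi(x)\]
is polynomial in $(x,\alpha)$ (and hence $C^\infty$), because $\mathcal{E}_T\phi_h$, $G_h|_T$ and $\hphi|_T$ are polynomials. Once I have a uniform lower bound on $\partial_\alpha F$ and a smallness estimate for $F(x,0)$, the implicit function theorem provides existence, uniqueness, and the smoothness claim $d_h\in C^\infty(\TGamma)$ all at once.

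For the lower bound a direct computation gives $\partial_\alpha F(x,\alpha) = \nabla\mathcal{E}_T\phi_h(x+\alpha G_h(x))\cdot G_h(x)$; combining Lemma~\ref{lem2}, \eqref{err1} and the nondegeneracy $\|\nabla\phi\|\sim 1$ from \eqref{LSdef} yields $\partial_\alpha F(x,\alpha) = \|\nabla\phi(x)\|^2 + \mathcal{O}(h)$, which is bounded below by a positive constant for all $h$ small. For the smallness of $F(x,0)=\phi_h(x)-\hphi(x)$ I insert $I_1\phi$ and apply the triangle inequality together with \eqref{err1} and standard piecewise linear interpolation estimates:
\[\|\phi_h-\hphi\|_{\infty,T} \leq \|\phi_h-\phi\|_{\infty,T} + \|\phi-I_1\phi\|_{\infty,T} + \|I_1(\phi-\phi_h)\|_{\infty,T} \lesssim h^{k+1}+h^2 \lesssim h^2.\]
Strict monotonicity combined with a mean value argument then gives $|d_h(x)|\lesssim h^2$, i.e.~\eqref{resd4}. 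Claim \eqref{resd1} is immediate: at any vertex $x_i$ of $T$ one has $\hphi(x_i)=\phi_h(x_i)=\mathcal{E}_T\phi_h(x_i)$, so $\alpha=0$ solves $F(x_i,\alpha)=0$ and uniqueness forces $d_h(x_i)=0$.

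For \eqref{resd6} I would differentiate $F(x,d_h(x))=0$ in $x$, and with $y:=x+d_h(x)G_h(x)$, solve the resulting relation
\[\nabla\hphi(x) = \nabla\mathcal{E}_T\phi_h(y) + \bigl(G_h(x)\cdot\nabla\mathcal{E}_T\phi_h(y)\bigr)\nabla d_h(x) + d_h(x)(DG_h(x))^T\nabla\mathcal{E}_T\phi_h(y)\]
for $\nabla d_h(x)$. The denominator $G_h\cdot\nabla\mathcal{E}_T\phi_h(y)$ is bounded below as before, while the numerator is of order $\mathcal{O}(1)$: $\|\nabla\hphi\|_{\infty,T}$ and $\|\nabla\mathcal{E}_T\phi_h\|_{\infty}$ are $\lesssim 1$ via \eqref{err1}, $\|DG_h\|_{\infty,T}\lesssim 1$ by Lemma~\ref{lem2}, and $|d_h|\lesssim h^2$ by the previous step. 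Hence $\|\nabla d_h\|_{\infty,T}\lesssim 1$ uniformly in $T\in\TGamma$. The only genuine obstacle along the way is verifying the uniform lower bound on $\partial_\alpha F$; everything else is essentially bookkeeping.
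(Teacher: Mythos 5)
Your proof follows essentially the same route as the paper's (Appendix, Section 7.1): the same per-element function $F(x,\alpha)=\mathcal{E}_T\phi_h(x+\alpha G_h(x))-\hphi(x)$, the same expansion $F(x,\alpha)=\alpha\Vert\nabla\phi(x)\Vert_2^2+\mathcal{O}(h^2)$ giving existence, uniqueness, smoothness and \eqref{resd4}, the identical vertex argument for \eqref{resd1}, and the identical implicit differentiation for \eqref{resd6}. The one step you cite too casually is precisely the one you flag as the crux: the bound $\nabla\mathcal{E}_T\phi_h(x+\alpha G_h(x))=\nabla\phi(x)+\mathcal{O}(h)$ does not follow from \eqref{err1} alone, because $y=x+\alpha G_h(x)$ may leave $T$, where $\mathcal{E}_T\phi_h$ is only the polynomial extension and \eqref{err1} says nothing; the paper closes this by Taylor-expanding $\mathcal{E}_T\phi_h-\phi$ (and its gradient) about $x\in T$ using the full seminorm bounds \eqref{err2}, cf.\ \eqref{Taylor}, and you should do the same.
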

\begin{proof}
These results can be derived using arguments very similar to the ones used in the proof of Lemma~\ref{lem6}. 
For completeness such a proof is given in the Appendix, section \ref{proof:propertiesdh}.
\end{proof}
\ \\[1ex]
Given the function $d_h G_h \in  C(\TGamma)^d$ we define
 \begin{equation} \label{eq:psih}
    \Psi_h^\Gamma(x) := x + d_h(x) G_h(x) \quad \text{for}~x \in T \in \TGamma,
  \end{equation}
which approximates the function $\Psi^\Gamma$ defined in \eqref{psi1}.
To remove possible discontinuities of $\Psi_h^\Gamma$ in $\OGamma$ we apply the projection to obtain
\begin{equation}
  \thetahGamma:= P_h^\Gamma \Psi_h^\Gamma = \id + P_h^\Gamma(d_h G_h).
\end{equation}

Using the results in the Lemmas \ref{lem6} and \ref{propertiesdh} one can  derive estimates on the difference $\Psi^\Gamma- \Psi_h^\Gamma$  and $\thetahGamma- \Psi_h^\Gamma$. Such results, which are needed in the error analysis, are presented in the next two lemmas.

\begin{lemma} \label{lem3}
The estimate
\begin{equation} \label{estPsi}
  \max_{T \in \TGamma} \|\Psi^\Gamma - \Psi_h^\Gamma\|_{\infty,T} + h \max_{T \in \TGamma} \|D (\Psi^\Gamma - \Psi_h^\Gamma) \|_{\infty,T}  \lesssim h^{k+1}
\end{equation}
holds.
\end{lemma}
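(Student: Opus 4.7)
The plan is to split the difference as
\[
  \Psi^\Gamma - \Psi_h^\Gamma = (d-d_h)\,G + d_h\,(G-G_h),
\]
handle the easy term using Lemma~\ref{propertiesdh}\eqref{resd4} and Lemma~\ref{lem2}, which gives $\|d_h(G-G_h)\|_{\infty,T}\lesssim h^2\cdot h^k=h^{k+2}$, and then reduce everything to pointwise (and gradient) control of $d-d_h$. Analogously, for the derivative I will differentiate the splitting and bound the four resulting terms, all of which are of order $h^k$ once a bound $\|\nabla(d-d_h)\|_{\infty,T}\lesssim h^k$ is available: $\|\nabla d_h\otimes(G-G_h)\|_\infty\lesssim h^k$ from Lemma~\ref{lem2} and \eqref{resd6}; $\|(d-d_h)DG\|_\infty\lesssim h^{k+1}$; $\|d_h(DG-DG_h)\|_\infty\lesssim h^2\cdot h^{k-1}=h^{k+1}$; finally $\|(\nabla d-\nabla d_h)\otimes G\|_\infty \lesssim h^k$.

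The main work is therefore in estimating $d-d_h$ and $\nabla(d-d_h)$. For the pointwise bound, I subtract the defining equations \eqref{cond1} and \eqref{eq:psihmap} and insert $\pm\,\phi(x+d_h(x) G_h(x))$ to write
\[
0=\phi(x+dG)-\phi(x+d_hG_h) \;+\; \bigl(\phi-\mathcal{E}_T\phi_h\bigr)\!\bigl(x+d_hG_h(x)\bigr).
\]
A Taylor expansion of the first difference around $x+d_hG_h$ gives
\[
\nabla\phi(\xi)\cdot\bigl[(d-d_h)G+d_h(G-G_h)\bigr]+\bigl(\phi-\mathcal{E}_T\phi_h\bigr)(x+d_hG_h)=0,
\]
where $\xi$ lies on the segment between $x+dG$ and $x+d_hG_h$. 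Since $\nabla\phi\cdot G=\|\nabla\phi\|^2\sim 1$ by \eqref{LSdef}, we can solve for $d-d_h$, obtaining
\[
|d-d_h|\lesssim |d_h|\,\|G-G_h\|_\infty + \|\phi-\mathcal{E}_T\phi_h\|_{\infty,T\cap U}\lesssim h^{k+2}+h^{k+1}\lesssim h^{k+1},
\]
by Lemmas~\ref{propertiesdh} and \ref{lem2} together with \eqref{err1} applied to the polynomial extension $\mathcal{E}_T\phi_h$ on a slightly enlarged neighborhood of $T$ (whose existence is guaranteed because $|d_h|\lesssim h^2$ keeps $x+d_hG_h$ within a distance $\mathcal{O}(h^2)$ of $T$).

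For $\nabla(d-d_h)$ I differentiate \eqref{cond1} and \eqref{eq:psihmap}. As in the derivation of \eqref{resd4b}, the identity for $\nabla d$ reads $\nabla d\,\bigl(\|\nabla\phi\|^2+\mathcal{O}(h^2)\bigr)=\nabla\hat\phi_h-\nabla\phi(x)+\mathcal{O}(h^2)$, and the analogous identity for $\nabla d_h$ reads $\nabla d_h\,\bigl(G_h^{\top}\nabla\mathcal{E}_T\phi_h(x+d_hG_h)\bigr)+\nabla\mathcal{E}_T\phi_h(x+d_hG_h)+d_h(DG_h)^{\top}\nabla\mathcal{E}_T\phi_h(\cdots)=\nabla\hat\phi_h$. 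Subtracting the two identities and using $\nabla\hat\phi_h-\nabla\phi=\mathcal{O}(h)$, together with $\|G-G_h\|_\infty\lesssim h^k$, $\|D(G-G_h)\|_\infty\lesssim h^{k-1}$ (Lemma~\ref{lem2}), $|d-d_h|\lesssim h^{k+1}$ just proved, $\|d_h\|_\infty\lesssim h^2$, $\|\nabla d_h\|_\infty\lesssim 1$, and $\|\nabla\phi-\nabla\mathcal{E}_T\phi_h\|_\infty\lesssim h^k$ on the relevant neighborhood (a consequence of \eqref{err2}), one isolates $\nabla(d-d_h)$ from a coefficient matrix that is uniformly invertible by \eqref{LSdef}, giving $\|\nabla(d-d_h)\|_{\infty,T}\lesssim h^k$.

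Combining these pointwise and gradient bounds with the splitting above yields \eqref{estPsi}. The main obstacle is the second step: rigorously tracking the polynomial extension $\mathcal{E}_T\phi_h$ off $T$ (to a neighborhood of width $\mathcal{O}(h^2)$ around $T\cap U$) and showing that the interpolation-type estimate \eqref{err1}/\eqref{err2} still applies there with the same order, so that the consistency error $\phi-\mathcal{E}_T\phi_h$ and $\nabla\phi-\nabla\mathcal{E}_T\phi_h$ may be controlled at the shifted evaluation points $x+d_hG_h$.
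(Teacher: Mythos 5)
Your proposal is correct and follows essentially the same route as the paper's proof: the same splitting $\Psi^\Gamma-\Psi_h^\Gamma=(d-d_h)G+d_h(G-G_h)$, the same reduction to the bounds $|d-d_h|\lesssim h^{k+1}$ and $\|\nabla(d-d_h)\|\lesssim h^k$ obtained by subtracting and then differentiating the two defining equations, and the same use of $\nabla\phi\cdot G\sim 1$ to solve for the difference. The obstacle you flag at the end is not a real one: the paper controls $(\mathcal{E}_T\phi_h-\phi)$ and its gradient at the shifted point $x+d_hG_h$ by a degree-$k$ Taylor expansion of $\mathcal{E}_T\phi_h-\phi$ about $x\in T$ together with the elementwise seminorm bounds \eqref{err2} and the smoothness of $\phi$, yielding exactly the $h^{k+1}$ and $h^k$ estimates you need (these are already established in the proof of Lemma~\ref{propertiesdh}).
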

\begin{proof}
In the Appendix, section \ref{proof:lem3}, we give a proof which is based on the definitions of $\Psi_h^\Gamma$, $\Psi^\Gamma$ and the properties of $d_h$ and $d$ in Lemmas \ref{propertiesdh} and \ref{lem6}.
\end{proof}
\\[1ex]
Next, we compare $\Psi^\Gamma$ with the isoparametric mapping $\Theta_h^\Gamma= P_h \Psi_h^\Gamma$. 
\begin{lemma}\label{lem4}
The estimate
\begin{equation}
  \sum_{r=0}^{k+1} h^r \max_{T \in \TGamma}\Vert D^r( \thetahGamma - \Psi^\Gamma) \Vert_{\infty,T} 
\lesssim h^{k+1}
\end{equation}
holds.
\end{lemma}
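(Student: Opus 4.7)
The plan is to split the error into an interpolation part and a projection part that can be controlled by the estimates already established. Using that $P_h^\Gamma \id=\id$ and that $\Psi^\Gamma \in C(\OGamma) \cap C^{k+1}(\TGamma)$ (Lemma~\ref{lem6}), so that $P_h^\Gamma \Psi^\Gamma = I_k \Psi^\Gamma$, I would write
\[
\thetahGamma - \Psi^\Gamma
\;=\; P_h^\Gamma \Psi_h^\Gamma - \Psi^\Gamma
\;=\; P_h^\Gamma(\Psi_h^\Gamma - \Psi^\Gamma) + \bigl(I_k \Psi^\Gamma - \Psi^\Gamma\bigr).
\]
The two terms are then treated separately for each $r \in \{0,1,\dots,k+1\}$.

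For the interpolation term I would apply standard elementwise polynomial interpolation estimates: for $T \in \TGamma$ and $0 \le r \le k$,
\[
\|D^r(I_k \Psi^\Gamma - \Psi^\Gamma)\|_{\infty,T}
\;\lesssim\; h^{k+1-r}\,\|\Psi^\Gamma\|_{H^{k+1,\infty}(T)} \;\lesssim\; h^{k+1-r},
\]
using the uniform bound \eqref{fistderPsiA} from Corollary~\ref{lem:boundpsig}. For $r=k+1$, simply $D^{k+1} I_k\Psi^\Gamma = 0$ and $\|D^{k+1}\Psi^\Gamma\|_{\infty,T}\lesssim 1$ by the same corollary. In both cases, multiplying by $h^r$ yields a contribution of order $h^{k+1}$.

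For the projection term I would exploit the fact that $P_h^\Gamma(\Psi_h^\Gamma - \Psi^\Gamma) \in V_h^k(\OGamma)^d$, i.e.\ it is a polynomial of degree at most $k$ on each $T \in \TGamma$. The $L^\infty$-stability of $P_h^\Gamma$ combined with Lemma~\ref{lem3} gives
\[
\|P_h^\Gamma(\Psi_h^\Gamma-\Psi^\Gamma)\|_{\infty,T}
\;\lesssim\; \max_{T'\in\TGamma}\|\Psi_h^\Gamma-\Psi^\Gamma\|_{\infty,T'}
\;\lesssim\; h^{k+1}.
\]
A standard inverse estimate on $P_k$ polynomials then yields, for $0\le r \le k$,
\[
\|D^r P_h^\Gamma(\Psi_h^\Gamma-\Psi^\Gamma)\|_{\infty,T}
\;\lesssim\; h^{-r}\|P_h^\Gamma(\Psi_h^\Gamma-\Psi^\Gamma)\|_{\infty,T}
\;\lesssim\; h^{k+1-r},
\]
while $D^{k+1}P_h^\Gamma(\Psi_h^\Gamma-\Psi^\Gamma)|_T = 0$. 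Multiplied by $h^r$ and summed this again gives $\lesssim h^{k+1}$.

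The main conceptual obstacle is that Lemma~\ref{lem3} only controls the first two derivatives of $\Psi_h^\Gamma - \Psi^\Gamma$, whereas the claim requires control up to order $k+1$. The trick is to hide the derivatives inside the finite element function $P_h^\Gamma(\Psi_h^\Gamma-\Psi^\Gamma)$ so that an inverse estimate converts the strong $L^\infty$ bound $h^{k+1}$ into bounds for higher derivatives, while the non-polynomial difference $I_k\Psi^\Gamma - \Psi^\Gamma$ is handled by classical interpolation theory using the uniform derivative bounds on $\Psi^\Gamma$. Adding the two contributions over $r=0,\dots,k+1$ yields the claimed estimate.
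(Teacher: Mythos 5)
Your proposal is correct and follows essentially the same route as the paper: the decomposition $\thetahGamma-\Psi^\Gamma = P_h^\Gamma(\Psi_h^\Gamma-\Psi^\Gamma)+(I_k\Psi^\Gamma-\Psi^\Gamma)$ is identical to the paper's splitting via $I_k\Psi^\Gamma$ (since $\thetahGamma-I_k\Psi^\Gamma=P_h^\Gamma(\Psi_h^\Gamma-\Psi^\Gamma)$), with the polynomial part handled by the $L^\infty$-stability of $P_h^\Gamma$, Lemma~\ref{lem3} and an inverse estimate, and the interpolation part by \eqref{fistderPsiA}. No gaps.
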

\begin{proof} We have with $P_h v = I_k v $ for $v \in C(\OGamma)$ 
\begin{align*}
\|\thetahGamma - \Psi^\Gamma\|_{\infty, \OGamma} & \leq  
\|P_h (\Psi_h^\Gamma - \Psi^\Gamma) \|_{\infty, \OGamma}  + \| I_k \Psi^\Gamma - \Psi^\Gamma\|_{\infty, \OGamma} \\
& \lesssim \| \Psi_h^\Gamma - \Psi^\Gamma \|_{\infty, \OGamma}  + h^{k+1} \max_{T \in \TGamma}\| d G\|_{H^{k+1,\infty}(T)}. 
\end{align*}
With \eqref{resd4c} and the smoothness of $G$ we have that $\max_{T \in \TGamma} \| d G\|_{H^{k+1,\infty}(T)}$ is uniformly bounded. Further we have due to \eqref{estPsi} $\| \Psi_h^\Gamma - \Psi^\Gamma \|_{\infty, \OGamma} \lesssim h^{k+1}$, which proves the estimate for the $r=0$ term in the sum.  For $r=1, \ldots,k+1$ terms we note
\begin{align*}
h^r \Vert D^r(\thetahGamma - \Psi^\Gamma) \Vert_{\infty,T} & \leq  h^r \Vert D^r(\thetahGamma - I_k \Psi^\Gamma) \Vert_{\infty,T} + h^r \Vert D^r (I_k \Psi^\Gamma - \Psi^\Gamma) \Vert_{\infty,T} \\
 & \lesssim \Vert \thetahGamma - I_k \Psi^\Gamma \Vert_{\infty,T} + h^r \Vert D^r (I_k \Psi^\Gamma - \Psi^\Gamma) \Vert_{\infty,T} \\
 & \lesssim \Vert \thetahGamma - \Psi^\Gamma \Vert_{\infty,T} + \Vert \Psi^\Gamma - I_k \Psi^\Gamma \Vert_{\infty,T} + h^r \Vert D^r (I_k \Psi^\Gamma - \Psi^\Gamma) \Vert_{\infty,T}, 
\end{align*}
with estimates that are uniform in $T \in \TGamma$. 
The bound for the first term on the right hand side follows from the previous result. The other terms are also uniformly bounded by $h^{k+1}$ due to the regularity of $\Psi^\Gamma$, cf.~\eqref{fistderPsiA}.
\end{proof}
\ \\[1ex]
From the result in Lemma~\ref{lem4} we obtain an optimal convergence order result for the distance between the approximate interface $\Gamma_h = \thetahGamma(\Gammalin)$ and $\Gamma$.
 \begin{lemma}\label{estGradA}
  The estimate 
 \[
  {\rm dist}(\Gamma_h,\Gamma) \lesssim h^{k+1}
 \]
 holds.
 \end{lemma}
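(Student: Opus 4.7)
The plan is to bound the Hausdorff distance by leveraging the fact that $\Psi^\Gamma$ sends $\Gammalin$ exactly onto $\Gamma$ (this is precisely how $\Psi^\Gamma$ was constructed via the condition $\phi(x+d(x)G(x))=\hphi(x)$, which gives $\phi(\Psi^\Gamma(x))=0$ whenever $\hphi(x)=0$). Since $\Gamma_h$ is defined as $\thetahGamma(\Gammalin)$, the two sets $\Gamma$ and $\Gamma_h$ are images of the same set $\Gammalin$ under two mappings whose difference is controlled by Lemma~\ref{lem4}.

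Concretely, I would argue in two directions. First, for any $y \in \Gamma_h$, write $y = \thetahGamma(x)$ for some $x \in \Gammalin$, and set $z := \Psi^\Gamma(x) \in \Gamma$. Then
\[
{\rm dist}(y,\Gamma) \le \|y - z\| = \|\thetahGamma(x) - \Psi^\Gamma(x)\| \le \max_{T \in \TGamma} \|\thetahGamma - \Psi^\Gamma\|_{\infty,T} \lesssim h^{k+1},
\]
by the $r=0$ term in Lemma~\ref{lem4}. Second, for any $z \in \Gamma$, use that $\Psi^\Gamma:\Gammalin \to \Gamma$ is surjective (for $h$ sufficiently small this follows from Corollary~\ref{lem:boundpsig}, which makes $\Psi^\Gamma$ a small perturbation of the identity, hence a homeomorphism onto its image) to write $z = \Psi^\Gamma(x)$ with $x \in \Gammalin$; then $y := \thetahGamma(x) \in \Gamma_h$ satisfies the same bound. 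Combining the two estimates yields ${\rm dist}(\Gamma_h,\Gamma) \lesssim h^{k+1}$.

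There is no real obstacle here: the work has already been done in Lemma~\ref{lem4}. The only subtle point worth flagging is the surjectivity of $\Psi^\Gamma:\Gammalin \to \Gamma$, which could be handled either by appealing to the construction of $d(x)$ (for each point of $\Gamma$, the search line through it along $G$ meets $\Gammalin$ uniquely for small $h$, by the same implicit-function argument as in Lemma~\ref{lem6}) or by noting that $\Psi^\Gamma = \id + O(h^2)$ in $C^1$-norm, so it is a bi-Lipschitz homeomorphism of $\OGamma$ and maps the boundary-like set $\Gammalin$ onto $\Gamma$. Either observation is a one-line remark, after which the Hausdorff bound is immediate.
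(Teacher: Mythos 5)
Your argument is correct and rests, as it must, on Lemma~\ref{lem4}, but it is organized differently from the paper's proof. You pair up $\Gamma$ and $\Gamma_h$ pointwise through their common parametrization over $\Gammalin$ by $\PsiGamma$ and $\thetahGamma$; the paper instead routes everything through the level set function: it bounds $\|\phi\|_{\infty,\Gamma_h}=\|\phi\circ\thetahGamma-\phi\circ\PsiGamma\|_{\infty,\Gammalin}\lesssim \|\thetahGamma-\PsiGamma\|_{\infty,\OGamma}\lesssim h^{k+1}$, using $\phi\circ\PsiGamma=\hphi=0$ on $\Gammalin$ and the Lipschitz continuity of $\phi$, and then converts smallness of $|\phi|$ into smallness of distance via \eqref{eq1}. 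Your first direction, $\sup_{y\in\Gamma_h}{\rm dist}(y,\Gamma)\lesssim h^{k+1}$, is airtight and needs only the inclusion $\PsiGamma(\Gammalin)\subseteq\Gamma$; this is in fact the quantity the paper measures numerically (cf.\ ${\rm dist}(\Gamma_h,\Gamma)\leq \sqrt[4]{2}\,d_{\Gamma_h}$ with $d_{\Gamma_h}=\|\phi\|_{\infty,\Gamma_h}$ in Section~\ref{sec:numex}), so for the one-sided distance you are already done, and without invoking \eqref{eq1}. Your second direction does hinge on surjectivity of $\PsiGamma|_{\Gammalin}$ onto $\Gamma$, and neither of your two justifications is quite the advertised one-liner: a homeomorphism onto its image does not by itself give that the image of $\Gammalin$ is all of $\Gamma$ (one needs, e.g., that $z\in\Gamma\cap\PsiGamma(\OGamma)$ forces $\hphi(\PsiGamma^{-1}(z))=\phi(z)=0$, plus the fact that $\Gamma$ does not escape $\PsiGamma(\OGamma)$, which is delicate near $\partial\OGamma$ since $\Gamma$ may protrude from $\OGamma$ by $O(h^2)$); and the search line through $z\in\Gamma$ is anchored at the unknown preimage $x$ (the direction is $G(x)$, not $G(z)$), so solving $x+d(x)G(x)=z$ is a small fixed-point argument rather than a direct application of Lemma~\ref{lem6}. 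That said, the paper's own proof makes a symmetric unstated assumption --- that the line $x+\xi\nabla\phi(x)$ through $x\in\Gamma$ actually meets $\Gamma_h$ at some small $|\xi|$ --- so your proposal is at a comparable level of rigor and is, if anything, more explicit about where the topological step sits.
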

 \begin{proof}
 Take $x \in \Gamma$.
 Let $y=x+\xi \nabla \phi(x) \in \Gamma_h=\thetahGamma(\Gammalin)$ be the point closest to $x$.
 Using \eqref{eq1} and $\|\nabla \phi(x)\|_2 \sim 1$ we get, using $\phi(\PsiGamma(\Gammalin))=0$,
 \begin{align*}
  \|x-y\|_2 & \sim |\xi| \sim |\phi(x+\xi \nabla \phi(x)) - \phi(x)|= |\phi(x+\xi \nabla \phi(x))| \leq \|\phi\|_{\infty,\Gamma_h} \\
  & = \|\phi \circ\thetahGamma\|_{\infty, \Gammalin} = \|\phi\circ\thetahGamma- \phi\circ\PsiGamma\|_{\infty, \Gammalin} \lesssim \|\thetahGamma -\PsiGamma\|_{\infty, \OGamma} \lesssim h^{k+1}.
 \end{align*}
 \end{proof}

\subsection{Extension procedure} \label{sectextension}
In this section we explain and analyze a general extension procedure for extending piecewise smooth functions given on $\partial \OGamma$ to the domain $\Omega \setminus \OGamma$, in  such a way that the extension is zero on $\Omega \setminus \OGammaplus$ and piecewise smooth on $\OGammaplus \setminus \OGamma$. 
We use a \emph{local} extension procedure, introduced and analyzed in  \cite{lenoir1986optimal,bernardi1989optimal}, which is a standard tool in isoparametric finite element methods. In the sections~\ref{sec:globaltrafoPsi} and \ref{sec:globaltrafo} this procedure is applied to construct extensions of $\Psi^\Gamma$ and $\thetahGamma$.

We first describe an extension operator, introduced in \cite{lenoir1986optimal}, which handles the extension of a function from (only) one edge or face to a triangle or tetrahedron. The presentation here is simpler as in \cite{lenoir1986optimal}, because we restrict to the case $d=2,3$ (although this is not essential) and  we only treat extension of functions given on the piecewise linear boundary $\partial\OGamma$, hence we do not have to consider the issue of boundary parametrizations. 
Let $F$ be an edge or a face of $T\in \TGammaplus\setminus \TGamma$ from which we want to extend a function $w \in C(F)$, which is zero at the vertices of $F$,  to the interior of $T$. ${F}$ is called a $1$-face of $T$  if $F$ is an edge of the triangle $T$ ($d=2$) or an edge  the tetrahedron ${T}$ ($d=3$) and it is called a $2$-face if $F$ is a face of the tetrahedron ${T}$. Applying affine linear transformations $\hat \Phi_F : \hat{F} \rightarrow F$ and  $\hat \Phi_T : \hat{T} \rightarrow T$ we consider the extension problem in the reference configuration with $\hat{T}$  and $\hat{F}$ the reference element and a corresponding face (or edge) and $\hat{w} := w \circ \hat\Phi_F^{-1}$.
  By $\lambda_i,~i=1,..,d+1$, we denote the barycentric coordinates of $\hat{T}$ and  we assume that the vertices corresponding to the coordinates $\lambda_1, \ldots, \lambda_{p+1}$ with coordinates $a_1,...,a_{p+1} \in \rr^d$ are also the vertices of the $p$-face $\hat{F}$.
   The linear scalar weight function $\omega$ and the vector function $Z$,  mapping from $\hat{T}$ to $\hat{F}$, are defined by
\begin{equation} \label{defomega} 
  \omega := \sum_{i=1}^{p+1} \lambda_i \quad \text{and} \quad
  Z := \left( \sum_{i=1}^{p+1} \lambda_i a_i \right) / \omega.
\end{equation}
Furthermore, given the interpolation operator $\Lambda_l : C(\hat{F}) \rightarrow \mathcal{P}^l(\hat{F})$ we define
\begin{equation} \label{defA} 
  A_l^\ast := \id - \Lambda_l, \quad  A_l := \Lambda_l - \Lambda_{l-1} = - A_{l}^\ast + A_{l-1}^\ast.
\end{equation}
The interpolation operator $\Lambda_l$  in \cite{lenoir1986optimal} is the usual nodal interpolation operator. We note however that this choice is not crucial. Given these components we define the following 
extension operator from \cite{lenoir1986optimal,bernardi1989optimal}:
\begin{equation} \label{eq:lenoirextref}
  \mathcal{E}^{\hat{F}\rightarrow \hat{T}} \hat w :=\omega^{k+1} A_k^\ast(\hat w)\circ Z+ \sum_{l=2}^k \omega^l  A_l( \hat{w}) \circ Z , \quad \hat{w} \in C(\hat{F}).
\end{equation}
The extension in physical coordinates is then given by
\begin{equation}\label{eq:lenoirext}
  \mathcal{E}^{ F \rightarrow T} w := (\mathcal{E}^{\hat{F}\rightarrow \hat{T}} (w \circ \hat{\Phi}_F^{-1})) \circ \hat{\Phi}_T^{-1}.
\end{equation}
A few elementary properties of this extension operator are collected in the following lemma.
These results easily follow from Remarks 6.4-6.6 in \cite{bernardi1989optimal}. We use the notation $C_0^m(F):=\{\, v \in C^m(F)~|~v|_{\partial F}=0\}$.
% $ \quad \text{at all vertices $x_i$ of $F$}\,\}$.
We write $C_0(F):=C_0^0(F)$.
\begin{lemma} \label{propextension}
The following holds:
\begin{align}
  (\mathcal{E}^{ F \rightarrow T} w)_{|F} & = w_{|F} &\hspace*{-1.5cm}& \text{for all}~w \in C_0(F),\\
  \mathcal{E}^{ F \rightarrow T} w & \in \mathcal{P}^k(T)&\hspace*{-1.5cm}& \text{for all}~w\in \mathcal{P}^k(F), \label{proppol} \\
  \mathcal{E}^{ F \rightarrow T} w & = 0 &\hspace*{-1.5cm}& \text{for all}~w\in \mathcal{P}^1(F). \label{proppone} \\
  \intertext{
For all $p$-faces $F, \tilde F$ of $T$ with $\tilde F \neq F$:}
(\mathcal{E}^{ F \rightarrow T}w)_{|\tilde F} & = 0 &\hspace*{-1.5cm}& \text{for all $w \in C_0(F)$}.
\label{proponeface} \\
% For all 2-faces $F, \tilde F$ of $T$ with $\tilde F \neq F$:
% \begin{equation} \label{rescon1}
% (\mathcal{E}^{ F \rightarrow T}w)_{|\tilde F} = 0 \quad \text{for all $w \in C_0(F)$ with $w_{|\partial F}=0$}.
% \end{equation}
\intertext
{
  Let $T,\tilde T$ be two tetrahedra with a common face $F_2= T \cap \tilde T$ and $F_1$ an edge of $F_2$. Then:
  }
  (\mathcal{E}^{ F_1 \rightarrow T}w)_{|F_2} & =  (\mathcal{E}^{ F_1 \rightarrow \tilde T}w)_{|F_2} &\hspace*{-1.5cm}& \text{for all $w \in C_0(F_1)$}. \label{propconsist}
\end{align}
\end{lemma}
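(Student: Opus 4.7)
The plan is to verify each assertion on the reference element $\hat T$ and then transfer by the affine change of variables in \eqref{eq:lenoirext}. Two observations drive the proof. First, on the face $\hat F$ one has $\omega \equiv 1$ and $Z = \id_{\hat F}$, so the trace of $\mathcal{E}^{\hat F \to \hat T}\hat w$ on $\hat F$ collapses by telescoping to $A_k^\ast \hat w + \sum_{l=2}^k A_l \hat w = (\id-\Lambda_k)\hat w + (\Lambda_k-\Lambda_1)\hat w = \hat w - \Lambda_1 \hat w$. Second, the vector $\omega Z = \sum_{i=1}^{p+1}\lambda_i a_i$ in \eqref{defomega} is affine in the barycentric coordinates of $\hat T$; so for any polynomial $q \in \mathcal{P}^l(\hat F)$, expanding $q(y)=\sum_{|\alpha|\leq l} c_\alpha y^\alpha$ gives $\omega^l q(Z) = \sum_{|\alpha|\leq l} c_\alpha\, \omega^{l-|\alpha|}(\omega Z)^\alpha \in \mathcal{P}^l(\hat T)$.

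With these two facts the first three identities are immediate. For \eqref{proppol}, each summand $\omega^l A_l(\hat w)\circ Z$ lies in $\mathcal{P}^l(\hat T)\subset \mathcal{P}^k(\hat T)$ and the $A_k^\ast$ term vanishes because $\Lambda_k$ is exact on $\mathcal{P}^k$. For \eqref{proppone}, both $A_k^\ast \hat w = \hat w - \Lambda_k \hat w$ and $A_l \hat w = \Lambda_l \hat w - \Lambda_{l-1} \hat w$ ($l\geq 2$) vanish for $\hat w \in \mathcal{P}^1$ because $\Lambda_l$ reproduces linears for every $l \geq 1$. For the trace identity one uses that $\hat w \in C_0(\hat F)$ vanishes at the vertices of $\hat F$, so $\Lambda_1 \hat w = 0$ and the trace equals $\hat w$.

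For \eqref{proponeface}, let $\tilde F \neq F$ be a $p$-face of $T$ and set $F' := F \cap \tilde F \subset \partial F$. For $\hat w \in C_0(\hat F)$ the interpolant $\Lambda_l \hat w$ vanishes on $\partial \hat F$ for every $l \geq 1$, because its restriction to each edge of $\partial \hat F$ is a polynomial of degree $l$ agreeing with $\hat w \equiv 0$ at $l+1$ nodes; hence $A_l \hat w$ and $A_k^\ast \hat w$ vanish on $\partial \hat F$, in particular on $F'$. On $\tilde F$ the barycentric coordinates $\lambda_i$ indexed by vertices of $F$ not belonging to $F'$ vanish identically, so at any $x \in \tilde F$ with $\omega(x) > 0$ the image $Z(x)$ is a convex combination of the vertices of $F'$ and thus lies in $F'$; consequently every summand in \eqref{eq:lenoirextref} vanishes pointwise. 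The uniform bounds $|\omega^l A_l(\hat w)\circ Z|\leq \omega^l \|A_l \hat w\|_{\infty,\hat F}$ and $|\omega^{k+1} A_k^\ast(\hat w)\circ Z|\leq \omega^{k+1}\|A_k^\ast \hat w\|_{\infty,\hat F}$ then imply, by continuity, that the products also vanish at points with $\omega(x)=0$; this simultaneously covers the degenerate case $F'=\emptyset$ where $\omega \equiv 0$ on $\tilde F$.

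The consistency property \eqref{propconsist} follows because on the shared face $F_2$ the barycentric coordinates of $T$ and of $\tilde T$ coincide for every vertex of $F_2$, and the two endpoints of $F_1 \subset F_2$ are vertices of $F_2$. Therefore $\omega|_{F_2}$ and $Z|_{F_2}$, and with them $A_l(\hat w)\circ Z$ and $A_k^\ast(\hat w)\circ Z$, depend only on $F_1$ and $F_2$ and not on the ambient simplex, so the two traces on $F_2$ agree. The one genuinely delicate point in the whole argument is the treatment of $\{\omega=0\}$ on $\tilde F$ in \eqref{proponeface}, which we handle via the boundedness estimate above rather than by a polynomial-extension argument (the first summand contains $\hat w\circ Z$, which is not polynomial, so a direct polynomial continuation is not available).
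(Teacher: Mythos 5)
Your proof is correct, and it takes the route the paper intends: the paper gives no argument of its own for this lemma but delegates it to Remarks 6.4--6.6 of \cite{bernardi1989optimal}, which consist of exactly the elementary observations you supply (telescoping of $A_k^\ast+\sum_{l\ge2}A_l$ to $\id-\Lambda_1$ on $\hat F$ where $\omega\equiv1$ and $Z=\id$; polynomiality of $\omega^{l}q(Z)$ via $\omega Z$ being affine; $Z$ mapping $\tilde F\cap\{\omega>0\}$ into $F\cap\tilde F\subset\partial F$ where the interpolation residuals vanish; and the intrinsic dependence of $\omega|_{F_2}$, $Z|_{F_2}$ on $F_1,F_2$ only). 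Your careful handling of the set $\{\omega=0\}$ via the uniform bound $\omega^{l}\|A_l\hat w\|_{\infty,\hat F}$ is a welcome detail that the citation glosses over.
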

The following result is a key property of this extension operator. Similar results are derived in \cite{lenoir1986optimal,bernardi1989optimal}.
\begin{lemma} \label{lem:lenoirext}
  For any $p$-face $F$ of $T$  the following holds:
  $$
  \Vert D^n \mathcal{E}^{F\rightarrow T} w \Vert_{\infty,T} \lesssim 
 \sum_{r=n}^{k+1} h^{r-n} \Vert D^r w \Vert_{\infty,F}, \quad \forall ~w \in C_0^{k+1}(F),~ n=0,..,k+1.
  $$
\end{lemma}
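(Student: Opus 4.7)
The plan is to reduce to the reference element by a scaling argument and then exploit the interpolation-error structure of the operators $A_l^\ast$ and $A_l$ on the reference face together with a careful derivative count on the reference element. Since $\hat\Phi_T : \hat T\to T$ and $\hat\Phi_F : \hat F\to F$ are affine with Jacobians of order $h$, the chain rule gives $\Vert D^n v\Vert_{\infty,T} \sim h^{-n}\Vert\hat D^n(v\circ\hat\Phi_T)\Vert_{\infty,\hat T}$ and likewise on $\hat F$. Setting $\hat w = w\circ\hat\Phi_F$, it therefore suffices to prove the $h$-independent estimate
\[ \Vert \hat D^n \mathcal{E}^{\hat F\to \hat T} \hat w \Vert_{\infty,\hat T} \lesssim \sum_{r=n}^{k+1} \Vert \hat D^r \hat w \Vert_{\infty,\hat F}, \qquad \hat w\in C_0^{k+1}(\hat F), \]
for $n=0,\ldots,k+1$; multiplying by $h^{-n}$ and using $h^{-r}\Vert \hat D^r\hat w\Vert_{\infty,\hat F}\sim \Vert D^r w\Vert_{\infty,F}$ then yields the stated bound with the correct powers $h^{r-n}$.

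On $\hat F$, the fact that $\Lambda_l$ reproduces $\mathcal{P}^l$ gives, by a Bramble--Hilbert argument, the reference-face estimate $\Vert \hat D^m A_l^\ast \hat w\Vert_{\infty,\hat F} \lesssim \Vert \hat D^{l+1}\hat w\Vert_{\infty,\hat F}$ for $0\leq m\leq l+1$, and the identity $A_l = A_{l-1}^\ast - A_l^\ast$ then yields $\Vert \hat D^m A_l\hat w\Vert_{\infty,\hat F} \lesssim \Vert \hat D^{l}\hat w\Vert_{\infty,\hat F}$. On $\hat T$, I would expand $\hat D^n[\omega^l(A_l\hat w)\circ Z]$ (and the analogous leading term) via the Leibniz rule and the Faa di Bruno formula, splitting into factors that depend on derivatives of $\omega^l$, on derivatives of $A_l\hat w$ evaluated at $Z$, and on derivatives of $Z$ itself. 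The crucial structural observation, which motivates the very definition \eqref{eq:lenoirextref} and is made precise in \cite{lenoir1986optimal,bernardi1989optimal}, is that although $Z=(\sum_i\lambda_i a_i)/\omega$ has derivatives blowing up like $\omega^{-j}$ near $\{\omega=0\}$, the combination $\omega^l \hat D^j\bigl((A_l\hat w)\circ Z\bigr)$ remains uniformly bounded on $\hat T$ by $\max_{0\leq m\leq j}\Vert \hat D^m A_l\hat w\Vert_{\infty,\hat F}$ for $j\leq l$, and analogously for the $\omega^{k+1}(A_k^\ast\hat w)\circ Z$ term with $j\leq k+1$.

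Combining these two ingredients term-by-term, the $\omega^l$-contributions ($l=2,\ldots,k$) are controlled by $\Vert\hat D^l\hat w\Vert_{\infty,\hat F}$ and the leading $\omega^{k+1}$-contribution by $\Vert \hat D^{k+1}\hat w\Vert_{\infty,\hat F}$; summing and scaling back to $T$ delivers the lemma. The main obstacle is the derivative bookkeeping in the composition step, i.e.\ tracking exactly how the powers of $\omega$ cancel the singularities of $\hat D^j Z$ near the sub-simplex opposite to $\hat F$. In a full write-up I would either invoke the corresponding technical lemma from \cite{lenoir1986optimal,bernardi1989optimal} or prove it by induction on $j$, using that $\omega\nabla Z$ is smooth on $\hat T$ and that the polynomial degree of $A_l\hat w$ (and the vanishing order of $A_k^\ast\hat w$ on $\hat F$) is matched to the available power of $\omega$ in each term.
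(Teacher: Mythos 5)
Your proposal is correct and follows essentially the same route as the paper's proof: affine scaling to the reference configuration, a Bramble--Hilbert bound for $A_l^\ast$ (and hence $A_l$) on $\hat F$, and then Leibniz/Fa\`a di Bruno combined with $\Vert D^m Z\Vert_{\infty,\hat T}\lesssim \omega^{-m}$ so that the prefactor $\omega^l$ (resp.\ $\omega^{k+1}$) absorbs the singular derivatives of $Z$, using that $\omega^l A_l(\hat w)\circ Z\in\mathcal P^l$ to restrict to $n\le l$. The only cosmetic difference is that the paper carries out the chain-rule bookkeeping explicitly (citing Bernardi's Lemma~6.1 only for the bound on $D^mZ$), and its triangle inequality for $A_l$ keeps both $\Vert D^l\hat w\Vert$ and $\Vert D^{l+1}\hat w\Vert$ on the right-hand side, which is harmless since both appear in the final sum.
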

% \begin{lemma} \label{lem:lenoirext}
%   OLD: \\
%   Let $F$ be a $p$-face of $T$ and $w \in C(F)$ with $\Vert D^m w \Vert_{\infty,\hat{F}} \lesssim 1,~m \leq k+1$, then there holds
%   $$
%   \Vert D^m \mathcal{E}^{F\rightarrow T} w \Vert_{\infty,T} \lesssim 1.
%   $$
% \end{lemma}
\begin{proof}
  The proof is along the same lines as the proof of Theorem 6.2 in \cite{bernardi1989optimal}, and is given in the appendix, Section \ref{proof:lenoirext}.
\end{proof}\\[1ex]
% \newpage
We now describe how to obtain a ``global'' extension $w^{ext}$ of a function $w \in C(\partial\OGamma)$. The structure of this extension is as follows. First, we introduce a simple variant of the previous extension operator $\mathcal{E}^{F\rightarrow T}$ to extend values from vertices and obtain $w_0^{ext}$. The difference $w - w_0^{ext}$ has, by construction, zero values on all vertices, so that we can apply the extension from edges to elements, using $\mathcal{E}^{F\rightarrow T}$ on each element, to obtain $w_1^{ext}$. In two dimensions this already concludes the extension and we set $w^{ext} = w_1^{ext}$. In three dimensions we finally apply the extension from faces to elements, based on $w - w_1^{ext}$ which has zero values on all edges, and obtain the extended function $w^{ext}$.
\\[1ex] 
% \begin{remark}
% \end{remark}
{\bf The extension from vertices}\\
We define the linear interpolation $I_1^{ext}:\, C(\partial \OGamma) \to V_h^1(\OGammaplus \setminus \OGamma)$ as
\begin{equation}
 \begin{cases}
  (I_1^{ext}w)(x_i) & = w(x_i) \quad \text{for all vertices}~ x_i \in \partial \OGamma \\
  (I_1^{ext}w)(x_i) & = 0 \quad \text{at all other vertices in}~\OGammaplus \setminus \OGamma,
 \end{cases}
\end{equation}
and for $w \in C(\partial \OGamma)$ we set $w_0^{ext} := I_1^{ext} w$. 
With this extension we have $(w - w_0^{ext})(x_i) = 0$ for all vertices $x_i \in \partial \OGamma$.
On all elements $T \in \TGammaplus \setminus \TGamma$ that have no edge in $\partial \OGamma$ we set $w^{ext} = w_0^{ext}$. All other elements are treated below.\\[1ex]
{\bf The extension from edges}\\
Let $S_1$ be the set of edges in $\partial \OGamma$ and $\T(F)$ the set of elements $T \in \TGammaplus \setminus \TGamma$ which have $F$ as an edge.
We define the extension from edges as
\begin{equation} \label{eq:extfromedge}
  w_1^{ext} := w_0^{ext} + \sum_{F \in S_1} \sum_{T\in\T(F)} \mathcal{E}^{F\rightarrow T} (w - w_0^{ext}).
\end{equation}
We note that $w_1^{ext}$ is continuous due to the consistency property \eqref{propconsist}.
On all elements $T \in \TGammaplus \setminus \TGamma$ that have no face in $\partial \OGamma$, i.e. all elements in the two dimensional case, we set $w^{ext} := w_1^{ext}$. All other elements are treated below with an additional extension from faces to elements.\\[1ex]
{\bf The extension from faces}\\
Let $S_2$ be the set of faces in $\partial \OGamma$ and $\T(F)$ the set of elements $T \in \TGammaplus \setminus \TGamma$ which have $F$ as a face.
Analogously to \eqref{eq:extfromedge} we define the extension from faces as
\begin{equation}
w_2^{ext} := w_1^{ext} + \sum_{F \in S_2} \sum_{T\in\T(F)} \mathcal{E}^{F\rightarrow T} (w - w_1^{ext})
\end{equation}
With this extension we finally have $(w - w_2^{ext})|_{\partial \OGamma} = 0$ and set $w^{ext} = w_2^{ext}$.\\[2ex]
\noindent
From the properties of the extension operator listed above it follows that $w^{ext} \in C(\OGammaplus\setminus \OGamma)$, $w^{ext}$ is a continuous extension of $w$ and $w=0$ on $\partial \OGammaplus$. This defines  the linear extension operator $\mathcal{E}^{\partial\OGamma}:\, C(\partial\OGamma) \to C(\OGammaplus\setminus \OGamma)$,  by $w^{ext}= \mathcal{E}^{\partial\OGamma} w$.
This operator is suitable for the extensions of $\thetahGamma - \id$ and $\PsiGamma- \id$ and inherits the boundedness property given in Lemma~\ref{lem:lenoirext} which leads to the following result.

\begin{theorem} \label{exthm} Let $\V(\partial \OGamma)$ denote the set of vertices in $\partial \OGamma$ and $\F(\partial \OGamma)$ the set of all edges ($d=2$) or faces ($d=3$) in $\partial \OGamma$. The following estimates hold
\begin{align}
  \|D^n\mathcal{E}^{\partial\OGamma} w\|_{\infty,\OGammaplus\setminus\OGamma}   & \lesssim \max_{F \in \F(\partial \OGamma)} \sum_{r=n}^{k+1} h^{r-n} \|D^rw\|_{\infty,F} \nonumber \\ & \quad +\, h^{-n}\max_{x_i \in \V(\partial \OGamma)}|w(x_i)| ,\quad n=0,1, \label{estext1} \\
  \max_{T \in \OGammaplus\setminus\OGamma} \|D^n\mathcal{E}^{\partial\OGamma} w\|_{\infty,T} & \lesssim \sum_{r=n}^{k+1} h^{r-n} \|D^rw\|_{\infty,F}, \quad n=2,\ldots, k+1,\label{estext2}
\end{align}
for all $w \in C(\partial \OGamma)$ such that $w \in C^{k+1}(F)$ for all $F \in \F(\partial \OGamma)$.
\end{theorem}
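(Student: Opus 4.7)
My plan is to bound each of the three pieces of the three-step construction separately — the vertex extension $w_0^{ext}=I_1^{ext}w$, the edge correction $w_1^{ext}-w_0^{ext}$, and (for $d=3$) the face correction $w_2^{ext}-w_1^{ext}$ — and then sum. Each correction after the first is obtained by applying Lemma~\ref{lem:lenoirext} to a function that, by construction, vanishes on the $(p-1)$-skeleton of the face from which one extends, so the task reduces to tracing the derivatives of the intermediate $\tilde w_j := w - w_{j-1}^{ext}$ back to $\|D^rw\|_{\infty,F}$ and to the vertex values $w(x_i)$.

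For the vertex contribution, $w_0^{ext}$ is continuous piecewise linear on $\OGammaplus\setminus\OGamma$ with vertex values either $w(x_i)$ (on $\partial\OGamma$) or $0$, so an inverse estimate gives $\|D^n w_0^{ext}\|_{\infty,T}\lesssim h^{-n}\max_{x_i\in\V(\partial\OGamma)}|w(x_i)|$ for $n=0,1$ and $D^n w_0^{ext}\equiv 0$ for $n\ge 2$. This produces exactly the vertex term in \eqref{estext1}. For the edge correction, $\tilde w_1:=w-w_0^{ext}$ vanishes at both endpoints of every $F\in S_1$, hence $\tilde w_1|_F\in C_0^{k+1}(F)$, and Lemma~\ref{lem:lenoirext} yields
$$\|D^n \mathcal E^{F\to T}\tilde w_1\|_{\infty,T}\lesssim \sum_{r=n}^{k+1} h^{r-n}\|D^r \tilde w_1\|_{\infty,F}.$$
Because $w_0^{ext}|_F$ is linear, $D^r\tilde w_1 = D^r w$ on $F$ for $r\ge 2$; for $r=1$ the mean value theorem gives $\|(w_0^{ext})'\|_{\infty,F}\le\|w'\|_{\infty,F}$; for $r=0$, vanishing endpoint values together with the one-dimensional Poincar\'e inequality give $\|\tilde w_1\|_{\infty,F}\lesssim h\|w'\|_{\infty,F}$. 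So the entire edge contribution is dominated by the face sum appearing on the right-hand side of \eqref{estext1}--\eqref{estext2}.

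In three dimensions the analogous argument is applied to $\tilde w_2:=w-w_1^{ext}$. The crucial observation is that by construction of $w_1^{ext}$ and by property \eqref{proponeface}, $\tilde w_2$ vanishes on every edge of every $F\in S_2$, so $\tilde w_2|_F\in C_0^{k+1}(F)$ and Lemma~\ref{lem:lenoirext} applies. Writing $\tilde w_2|_F = w|_F - w_0^{ext}|_F - \sum_{F'\subset \partial F} \mathcal E^{F'\to T}\tilde w_1$, each derivative $\|D^r\tilde w_2\|_{\infty,F}$ is controlled by $\|D^rw\|_{\infty,F}$ using the already established edge estimates together with a two-dimensional Poincar\'e-type inequality on $F$ to handle the low-order $r$. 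Assembling the three contributions and using that each $T\in\TGammaplus\setminus\TGamma$ meets only a bounded number of vertices, edges and faces of $\partial\OGamma$ gives \eqref{estext1} and \eqref{estext2}.

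The main technical obstacle is the 3D face step: one has to verify carefully that the preceding edge extensions do not spoil the vanishing-trace property of $\tilde w_2$ on the edges of each $F\in S_2$, and that the derivatives of $w_1^{ext}$ restricted to $F$ are bounded by the original data on $F$ uniformly in $h$ and in the position of $\partial\OGamma$ in the mesh. This bookkeeping is essentially the isoparametric analogue of the argument behind Theorem~6.2 of \cite{bernardi1989optimal}; once it is in place, the three estimates combine to give \eqref{estext1}--\eqref{estext2} in a straightforward manner.
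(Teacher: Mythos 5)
Your proposal is correct and follows essentially the same route as the paper: peel off the piecewise-linear vertex interpolant $I_1^{ext}w$ (bounded via the vertex values, with $D^nI_1^{ext}w=0$ for $n\geq 2$), and control the remaining edge and face corrections by applying Lemma~\ref{lem:lenoirext} to $w-w_0^{ext}$ resp.\ $w-w_1^{ext}$, whose derivatives on each $F$ are in turn bounded by those of $w$. The paper compresses the edge-then-face composition into a single appeal to Lemma~\ref{lem:lenoirext} via the splitting $\mathcal{E}^{\partial\OGamma}=I_1^{ext}+\mathcal{E}^{\partial\OGamma}(\id-I_1^{ext})$ together with the bound \eqref{pll}; your version simply makes that bookkeeping explicit.
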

\begin{proof}
  We note that due to \eqref{proppone} we have that $\mathcal{E}^{\partial \OGamma} = I_1^{ext} + \mathcal{E}^{\partial \OGamma}( \id - I_1^{ext})$. Furthermore, $(w - I_1^{ext}w)(x_i)=0$ for all vertices $x_i \in \partial\OGamma$ and $\mathcal{E}^{\partial \OGamma}( \id - I_1^{ext})$ is a composition of the element-local extension operator $\mathcal{E}^{F \rightarrow T}$ only. Hence, the results in Lemma~\ref{lem:lenoirext} imply corresponding results for $\mathcal{E}^{\partial \OGamma}( \id - I_1^{ext})$. 
For $r \geq 2$ we have $D^rI_1^{ext}w=0$ and furthermore
\begin{equation} \label{pll}
\|D^r( w-I_1^{ext}w)\|_{\infty,F} \lesssim \|D^rw\|_{\infty,F}, \quad \text{for}~0 \leq r \leq k+1, ~~\text{and}~F\in \F(\partial \OGamma).
\end{equation}
From this estimate and the result in 
  Lemma~\ref{lem:lenoirext} the estimate \eqref{estext2} follows. Using
  \[
  \| I_1^{ext}w\|_{\infty,\OGammaplus\setminus\OGamma}+h \|DI_1^{ext}w\|_{\infty,\OGammaplus\setminus\OGamma} \lesssim \max_{x_i \in \V(\partial \OGamma)}|w(x_i)|
  \]
in combination with the  result in 
  Lemma~\ref{lem:lenoirext} and \eqref{pll} yields the estimate \eqref{estext1}.
\end{proof}

%   Both extension operators have been presented in \cite{lenoir1986optimal} and it has been proven that these are bounded operators with
%   $ \Vert \mathcal{E}^E w \Vert_{k,\infty,\partial T} \lesssim \Vert \mathcal{E}^E w \Vert_{k,\infty, T}$
% and
% $ \Vert \mathcal{E}^F w \Vert_{k,\infty,\partial T} \lesssim \Vert \mathcal{E}^E w \Vert_{k,\infty, T}$.
% Corresponding results are given in the appendix.

\subsection{The global mesh transformation $\Psi$}  \label{sec:globaltrafoPsi}
Given the local transformation $\PsiGamma$ and the extension operator $\mathcal{E}^{\partial\OGamma}$  defined in section  \ref{sectextension} we define the following global continuous mapping on $\Omega$:
\begin{equation} \label{eq:globext}
    \mathcal{E} \Psi^\Gamma = \left\{ \begin{array}{rcl}
                              \PsiGamma & \text{ on} & \OGamma, \\
                              \id +\mathcal{E}^{\partial\OGamma}(\PsiGamma-\id) & \text{ on} & \OGammaplus \setminus \OGamma, \\
                              \id & \text{ on} & \Omega \setminus \OGammaplus,
                              \end{array}\right.
  \end{equation}
  and $\Psi:=\mathcal{E} \Psi^\Gamma$. For this global mapping the following bounds on derivatives are easily derived from the results already obtained for $\Psi^\Gamma$ and for the extension operator $\mathcal{E}^{\partial\OGamma}$.
  \begin{theorem} \label{ThmPsi}
   For $\Psi=\mathcal{E} \Psi^\Gamma$ the following holds:
   \begin{align} 
    \|\Psi - \id\|_{\infty,\Omega}+ h\|D\Psi - I\|_{\infty,\Omega} & \lesssim h^2, \label{bpsi1} \\ 
    \max_{T\in\T} \Vert D^l \Psi \Vert_{\infty,T} & \lesssim 1, \quad 0 \leq l \leq k+1. \label{bpsi2}
   \end{align}
  \end{theorem}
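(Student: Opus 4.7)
The plan is to decompose $\Omega$ according to the piecewise definition \eqref{eq:globext} and prove the bounds on each piece separately. On $\Omega\setminus\OGammaplus$ we have $\Psi = \id$, so all claims are trivial. On $\OGamma$ we have $\Psi = \PsiGamma$, so \eqref{bpsi1} and \eqref{bpsi2} follow immediately from Corollary \ref{lem:boundpsig}. The only real work is on the ``extension band'' $\OGammaplus \setminus \OGamma$, where $\Psi - \id = \mathcal{E}^{\partial\OGamma}(\PsiGamma - \id)$, and this is exactly where Theorem \ref{exthm} comes in.

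First I would record the boundary data needed to apply Theorem \ref{exthm}. Since $\PsiGamma - \id = dG$ on $\OGamma$ with $G \in C^{k+1}(U)$ smooth, the per-face bounds
\[
\|D^r(\PsiGamma - \id)\|_{\infty,F} \lesssim h^{\max(2-r,0)}, \quad 0\le r\le k+1,
\]
follow from \eqref{resd4a}, \eqref{resd4b}, \eqref{resd4c} (combined with the product rule and boundedness of derivatives of $G$) for every face $F \in \F(\partial\OGamma)$. For the vertex term in \eqref{estext1}, \eqref{resd4a0} gives $|(\PsiGamma - \id)(x_i)| = |d(x_i)G(x_i)| \lesssim h^{k+1}$.

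Next I would plug these into Theorem \ref{exthm}. For $n=0$ the sum over $r$ becomes $h^2 + h\cdot h + h^2 + \ldots + h^{k+1}\lesssim h^2$, and the vertex contribution is $h^{k+1}\le h^2$, giving $\|\Psi-\id\|_{\infty,\OGammaplus\setminus\OGamma} \lesssim h^2$. For $n=1$ the sum yields $h^0 \cdot h + h^1 \cdot 1 + \ldots + h^k \cdot 1 \lesssim h$, and the vertex contribution is $h^{-1}\cdot h^{k+1} \lesssim h$, establishing $\|D\Psi - I\|_{\infty,\OGammaplus\setminus\OGamma} \lesssim h$. For $2\le n \le k+1$, \eqref{estext2} yields $\|D^n\Psi\|_{\infty,T} \lesssim h^{0}\cdot 1 + h\cdot 1 + \ldots \lesssim 1$ uniformly for $T$ in the extension band. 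Combining the three regions gives \eqref{bpsi1} and \eqref{bpsi2}.

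The only subtlety is consistency at the internal boundaries $\partial\OGamma$ and $\partial\OGammaplus$: one needs $\Psi$ to join continuously with $\PsiGamma$ on $\partial\OGamma$ and with $\id$ on $\partial\OGammaplus$ so that the piecewise estimates genuinely yield global $L^\infty$ bounds. This is built into the construction of $\mathcal{E}^{\partial\OGamma}$ in Section \ref{sectextension} (the interpolation property on $\partial\OGamma$ and the zero-boundary condition on $\partial\OGammaplus$, encoded by \eqref{proponeface}), so no extra argument is needed. Note however that \eqref{bpsi2} for $l \geq 2$ is stated only elementwise ($\max_{T \in \T}$), which is essential since $\Psi$ need not be smooth across inter-element boundaries in the extension band; this matches the elementwise nature of \eqref{estext2}. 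The main potential obstacle I anticipate is simply keeping track of the sharpest power of $h$ that survives after summing contributions from $d$, $G$, and the vertex term — but as the arithmetic above shows, the worst exponents in \eqref{bpsi1} come from the $r=0,1$ terms, while the higher-order terms $r\ge 2$ contribute only lower-order perturbations.
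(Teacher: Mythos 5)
Your proposal is correct and follows essentially the same route as the paper: the same three-region decomposition, Corollary~\ref{lem:boundpsig} on $\OGamma$, and Theorem~\ref{exthm} fed with the face bounds from Lemma~\ref{lem6} (via $\PsiGamma-\id = dG$) and the vertex bound \eqref{resd4a0} on the extension band. The only cosmetic difference is that the paper bounds the $n=0$ and $n=1$ cases in a single combined estimate rather than separately, and leaves the per-face power counting implicit.
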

  \begin{proof}
  On  $\Omega \setminus \OGammaplus$ these results are trivial since $\Psi - I=0 $ . On $\OGamma$ we have $D\Psi - I= D\PsiGamma- I$ and the results are a direct consequence of Corollary~\ref{lem:boundpsig}. On  $\OGammaplus \setminus \OGamma$ the result in \eqref{bpsi2}  for $l=0,1$ follows from \eqref{bpsi1} and for $l \geq 2$ it follows from \eqref{estext2} combined with \eqref{fistderPsiA}. We use the estimate in \eqref{estext1} and thus get
  \begin{align*}
    & \|\Psi - \id \|_{\infty,\OGammaplus\setminus\OGamma} + h\|D\Psi - I\|_{\infty,\OGammaplus\setminus\OGamma} \\
&  =\|\mathcal{E}^{\partial\OGamma}(\PsiGamma-\id)\|_{\infty,\OGammaplus\setminus\OGamma}+ h \|D\mathcal{E}^{\partial\OGamma}(\PsiGamma-\id)\|_{\infty,\OGammaplus\setminus\OGamma} \\
    & \lesssim \sum_{r=0}^{k+1} h^{r} \max_{F \in \F(\partial \OGamma)}\|D^r (\PsiGamma-\id)\|_{\infty,F}+  \max_{x_i \in \V(\partial \OGamma)}|(\PsiGamma-\id)(x_i)| \\
    & \lesssim \|\PsiGamma - \id\|_{\infty,\OGamma}+h  \|D\PsiGamma - I\|_{\infty,\OGamma} + h^2 \max_{2 \leq l \leq k+1} \max_{T \in \TGamma} \|D^l \PsiGamma\|_{\infty,T} + \max_{x_i \in \V(\partial \OGamma)}|d(x_i)|,
    \end{align*}
 and using \eqref{resd4a0} and the results of Corollary~\ref{lem:boundpsig}  completes the proof.
  \end{proof}\\[1ex]
  From \eqref{bpsi1} it follows that, for $h$ sufficiently small, $\Psi$ is a bijection on $\Omega$. Furthermore this mapping induces a family of (curved) finite elements that is regular of order $k$, in the sense as defined in \cite{bernardi1989optimal}. The corresponding  curved finite element space is given by 
  \begin{equation} \label{FEcurvedPsi}
    V_{h,\Psi}:= \{\, v_h \circ \Psi_h^{-1}~|~ v_h \in V_h^k\, \}.
  \end{equation}
Due to the results in Theorem~\ref{ThmPsi} the analysis of the approximation error for this finite element space as developed in \cite{bernardi1989optimal} can be applied. Corollary 4.1 from that paper yields that there exists an interpolation operator $\Pi_h:\, H^{k+1}(\Omega) \to  V_{h,\Psi}$ such that
\begin{equation} \label{errorBernardi}
 \|u- \Pi_h u\|_{L^2(\Omega)}+ h\|u-\Pi_h u\|_{H^1(\Omega)} \lesssim h^{k+1} \|u\|_{H^{k+1}(\Omega)} \quad \text{ for all } u \in H^{k+1}(\Omega).
\end{equation}
This interpolation result will be used in the error analysis of our method in section~\ref{approx}.
  \subsection{The global mesh transformation $\thetah$} \label{sec:globaltrafo}
  We define the extension of $\thetahGamma$ by using the same approach as for $\Psi^\Gamma$ in \eqref{eq:globext}, i.e. $\thetah:=\mathcal{E}\thetahGamma$. This global mapping $\thetah$ is used in the isoparametric unfitted finite element method explained in the next section. Note that $\Psi:=\mathcal{E}\Psi^\Gamma$ is only used in the analysis.
  \begin{remark}\rm The mapping 
    $\thetahGamma$ is a piecewise polynomial function of degree $k$ on $\partial\OGamma$ and thus $\thetah \in V_h^k(\Omega)^d$, cf. \eqref{proppol}. Further we have $\thetahGamma(x_i) = 0$ on all vertices of $\partial \OGamma$. Both lead to simplifications in the extension of $\thetahGamma$.
    In \eqref{eq:lenoirextref} the first term involving $A_k^{\ast}$ vanishes as $\thetahGamma|_F \in \mathcal{P}^k(F)$ for every edge and face $F \in \partial \OGamma$. Furthermore, $I_1^{ext} \thetahGamma = 0$ so that in an implementation only the extensions from edges and faces (with $w_0^{ext} = 0$) have to be considered. We do not address further implementation aspects of the mapping $\thetah$ here. For the local mapping $\thetahGamma$ these are discussed in \cite{lehrenfeld15}. The extension procedure is the same as  in the well-established isoparametric finite element method for a high order boundary approximation. Here we only note that in our implementation of this extension  we use a convenient approach based on a hierarchical/modal basis for the finite element space $V_h^k$, cf. \cite{karniadakis2013spectral}.
\end{remark}\\[1ex]
We discuss \emph{shape regularity} of the mapping $\Theta_h$.
Clearly, the mapping $\Theta_h$ should be a bijection on $\Omega$ and the transformed simplices $\Theta_h(T)$, $T \in \T$, should have some shape regularity property.

It is convenient to relate the transformed simplices $\Theta_h(T)$ to (piecewise) transformations of the unit simplex, denoted by $\hat T$.
The simplicial triangulation $\T$ can be represented by affine transformations $\Phi_T(x)=A_Tx +b_T$, i.e.
$\T=\{\, \Phi_T(\hat T)\,\}$.
This defines a mapping $\hat T \to \tilde{T}:=\Theta_h (\Phi_T(\hat T))$.
This mapping should be bijective and well conditioned.
Note that $\kappa(D(\thetah \circ \Phi_T)) \leq \kappa(D \thetah) \kappa(D \Phi_T)$, where $\kappa(\cdot)$ denotes the spectral condition number.
Since $\Phi_T: \hat T \to T$ is bijective and  well-conditioned 
it suffices to show the bijectivity and well-conditioning of $\Theta_h: T \to \Theta_h(T)$. Note that $\Psi- \thetah= \mathcal{E}(\PsiGamma-\thetahGamma)$ and using the estimate   \eqref{estext1}, with $n=1$, combined with the result in Lemma~\ref{lem4}, we get $\|D(\Psi- \thetah)\|_{\infty,\Omega} \lesssim h$. Thus, combined with the estimate in    \eqref{bpsi1} we get
\begin{equation} \label{estD}
  \|D\thetah - I\|_{\infty,\Omega} \lesssim h.
\end{equation}
This implies that for $h$ sufficiently small (the ``resolved'' case) $D\Theta_h$ is invertible and thus $\Theta_h: T \to \Theta_h(T)= \tilde T$ is a bijection and furthermore $ \kappa(D \Theta_h) = 1+ \mathcal{O}(h)$. Hence, we have shape regularity of $\tilde{\T} = \{\tilde T\}$ for $h$ sufficiently small. In the analysis we consider only the resolved case with $h$ sufficiently small. In practice one  needs suitable   modifications of the method  to guarantee shape regularity also in cases where $h$ is not ``sufficiently small''. We do not discuss this here and instead refer to \cite{lehrenfeld15}.
Further important consequences of the smallness of the deformation $\thetah$ are summarized in the following lemma.
\begin{lemma}\label{lemF}
Let $F := D \thetah$, $J_V := \det(F)$, $ J_{\Gamma} := J_V \Vert F^{-T} n_{\Gamma^{\rm lin}} \Vert_2$. There holds:
\begin{subequations}
\begin{align}
 \max_{T \in \T} \| F - I \|_{\infty,T} & \lesssim h, \label{estF}\\
         \int_{\Omega_i^{\rm lin}} v^2 \, d\tilde x \sim \int_{\Omega_i^{\rm lin}} J_V v^2 \, d\tilde x &= \int_{\thetah(\Omega_i^{\rm lin})} \!\!\!\!\!\!\!\! (v \circ \thetah^{-1})^2 \, d x,   \label{jacV}\\
      \label{jacG}
      \int_{\Gamma^{\rm lin}} v^2 \, d\tilde s \sim \int_{\Gamma^{\rm lin}} J_\Gamma v^2 \, d\tilde s &= \int_{\thetah(\Gamma^{\rm lin})} \!\!\!\!\!\!\!\! (v \circ \thetah^{-1})^2 \, ds, \\
      \label{jacgrad}
   \Vert \nabla (v \circ \thetah^{-1}) \Vert_2  & \sim \Vert \nabla v \Vert_2 \text{ a.e. in } \Omega \text{ for } v \in H^1(\Omega).
    \end{align}
\end{subequations}
\end{lemma}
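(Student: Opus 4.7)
The proof is essentially a sequence of short corollaries of the key bound \eqref{estD}, namely $\|D\Theta_h - I\|_{\infty,\Omega} \lesssim h$, combined with standard transformation rules. I would organize the argument as follows.

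First, the estimate \eqref{estF} is nothing but a restatement of \eqref{estD} with $F := D\Theta_h$. From this I would next extract the pointwise bound $\|F^{-1} - I\|_{\infty,\Omega} \lesssim h$, valid for $h$ sufficiently small. Indeed, writing $F = I + E$ with $\|E\|_{\infty,\Omega} \le ch$, a Neumann-series argument gives invertibility of $F$ and $F^{-1} = I - E + E^2 - \ldots$, hence $\|F^{-1} - I\|_{\infty,\Omega} \lesssim h$. By a similar expansion of $\det(I+E)$ one obtains $J_V = \det F = 1 + \mathcal{O}(h)$, so that $J_V \sim 1$ uniformly. The equivalence $\int_{\Omega_i^{\mathrm{lin}}} v^2\,d\tilde{x} \sim \int_{\Omega_i^{\mathrm{lin}}} J_V v^2\,d\tilde{x}$ in \eqref{jacV} is then immediate, while the equality with $\int_{\Theta_h(\Omega_i^{\mathrm{lin}})}(v\circ \Theta_h^{-1})^2\,dx$ is just the change-of-variables formula applied elementwise (note that $\Theta_h$ is piecewise smooth and globally a bijection for $h$ small by the discussion preceding the lemma, so the substitution is licit).

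For \eqref{jacG} I would use the standard surface transformation formula $J_\Gamma = J_V \|F^{-T} n_{\Gamma^{\mathrm{lin}}}\|_2$ (which is precisely the definition given). Since $F^{-T}$ is close to $I$ and $n_{\Gamma^{\mathrm{lin}}}$ is a unit vector, the triangle inequality yields $\|F^{-T} n_{\Gamma^{\mathrm{lin}}}\|_2 = 1 + \mathcal{O}(h)$, and combined with $J_V \sim 1$ this gives $J_\Gamma \sim 1$. The equivalence and the equality in \eqref{jacG} then follow as in the volume case. Finally, for \eqref{jacgrad} the chain rule yields, a.e. on $\Omega$,
\[
\nabla(v\circ \Theta_h^{-1})(x) = F\bigl(\Theta_h^{-1}(x)\bigr)^{-T}\,\nabla v\bigl(\Theta_h^{-1}(x)\bigr),
\]
and since both $\|F^{-T}\|_2$ and $\|F^T\|_2$ are bounded by $1+\mathcal{O}(h)$, multiplication by $F^{-T}$ preserves Euclidean norms up to constants. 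This gives the two-sided bound $\|\nabla(v\circ\Theta_h^{-1})\|_2 \sim \|\nabla v\|_2$ pointwise almost everywhere (after the implicit identification via $\Theta_h$).

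The only real subtlety is ensuring that $h$ is indeed small enough to apply the Neumann expansion for $F^{-1}$ and to guarantee $J_V$ is bounded away from zero; this is exactly the ``resolved case'' already invoked before the lemma, so no new obstacle arises. Everything else is pure linear algebra plus the standard substitution rules for volume and surface integrals.
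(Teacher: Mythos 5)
Your argument is correct and follows essentially the same route as the paper: all four claims are reduced to $\|D\Theta_h-I\|_{\infty,\Omega}\lesssim h$, giving $F^{-1}=I+\mathcal{O}(h)$, $\det F=1+\mathcal{O}(h)$, hence $J_V\sim 1$ and $J_\Gamma\sim 1$, with the equalities coming from the volume and surface change-of-variables formulas and \eqref{jacgrad} from the chain rule. The only cosmetic difference is that you quote the surface transformation formula $ds=|\det F|\,\|F^{-T}n_{\Gamma^{\rm lin}}\|_2\,d\tilde s$ as standard, whereas the paper derives it (for $d=3$) by parametrizing $\Theta_h(\Gamma^{\rm lin}\cap T)$ and using the cross-product identity $\|Ft_1\times Ft_2\|_2=|\det F|\,\|F^{-T}(t_1\times t_2)\|_2$; the content is identical.
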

\begin{proof} 
From  \eqref{estD}  the result \eqref{estF} and $\det(F)=1+\mathcal{O}(h)$ easily follow. The latter implies the $\sim$ result in  \eqref{jacV}. We now consider the integral transformation result in \eqref{jacG}. We only treat $d=3$ ($d=2$ is very similar). Take $x \in \Gammalin \cap T$ and a local orthonormal system $t_1, t_2 \in n_{\Gammalin}(x)^\perp$ at $x$. The function $g(z_1,z_2):= \thetah(x+z_1t_1+z_2 t_2)$, $z_i \in \Bbb{R}$ parametrizes $\thetah(\Gammalin \cap T)$. For the change in measure we have, with $F=F(x)=D\thetah(x)$, 
\begin{equation} \label{changemeas} \begin{split}
 ds & = \big\| \frac{\partial g}{\partial z_1} \times  \frac{\partial g}{\partial z_2}\big\|_2\, d \tilde s =\|F t_1 \times F t_2\|_2\, d\tilde s \\ & = |\det (F)| \|F^{-T}(t_1 \times t_2)\|_2\, d\tilde s =
\det (F) \|F^{-T}n_{\Gammalin}(x)\|_2 \, d\tilde s = J_\Gamma \, d\tilde s ,  
\end{split} \end{equation}
which proves the equality in \eqref{jacG}.
From  \eqref{estF}  and $\det(F)=1+\mathcal{O}(h)$ we get the $\sim$ result in  \eqref{jacG}. With $\nabla (v \circ \thetah^{-1})= F^{-T} \nabla v$ and \eqref{estF} the result in  \eqref{jacgrad} follows.
\end{proof}

\section{Isoparametric unfitted finite element method} \label{unfittedFEM}
In this section we introduce the isoparametric unfitted finite element method based on the isoparametric mapping $\Theta_h$.
We consider the model elliptic interface problem \eqref{eq:ellmodel}.
 The weak formulation of this problem is as follows: determine $u \in  H_0^1(\Omega)$ such that
\[
 \int_{\Omega} \alpha \nabla u \cdot \nabla v \, dx = \int_\Omega f v \, dx \quad \text{for all}~~v \in H_0^1(\Omega).
\]
We define the isoparametric Nitsche unfitted FEM  as a transformed version of the original Nitsche unfitted FE discretization \cite{hansbo2002unfitted} with respect to the interface approximation $\Gamma_h = \Theta_h(\Gammalin)$.  We introduce some further notation.
The standard unfitted space w.r.t. $\Gammalin$ is denoted by
\[
  V_h^\Gamma:= {V_h^k}_{|\Omegalin_1}\oplus {V_h^k}_{|\Omegalin_2}.  
\]
To simplify the notation we do not explicitly express the polynomial degree $k$ in $V_h^\Gamma$. 
\begin{remark} \rm Note that the polynomial degree $k$ is used in  the finite element space that contains the level set function approximation, $\phi_h \in V_h^k$.  In the definition of the unfitted finite element space $V_h^\Gamma$ above, which is used for the discretization of the interface problem \eqref{eq:ellmodel},   we could also use a polynomial degree $m \neq k$. We restrict to the case that both spaces (for the discrete level set function and for the discretization of the PDE) use the same degree $k$, because this simplifies the presentation and there is no significant improvement of the method if one allows $m \neq k$.
\end{remark}
\ \\[1ex]
The isoparametric unfitted FE space is defined as
\begin{equation}\label{transfspace}
 V_{h,\Theta}^\Gamma:= \{\, v_h \circ \Theta_h^{-1}~|~ v_h \in V_h^\Gamma\, \}= \{\, \tilde v_h~|~\tilde v_h \circ \Theta_h \in  V_h^\Gamma\, \}.
\end{equation}
Based on this space we formulate a discretization of \eqref{eq:ellmodel} using the Nitsche technique \cite{hansbo2002unfitted} with $\Gamma_h = \Theta_h(\Gammalin)$ and $\Omega_{i,h} = \Theta_h(\Omegalin_i)$ as numerical approximation of the geometries: determine $ u_h \in V_{h,\Theta}^\Gamma$ such that
\begin{equation} \label{Nitsche1}
 A_h(u_h,v_h) := a_h(u_h,v_h) + N_h(u_h,v_h) = f_h(v_h) \quad \text{for all } v_h \in V_{h,\Theta}^\Gamma
\end{equation}
with the bilinear forms
\begin{subequations} \label{eq:blfs}
\begin{align}
a_h(u,v) & := \sum_{i=1}^2 \alpha_i \int_{\Omega_{i,h}} \nabla u \cdot \nabla v dx, \\
N_h(u,v) & := N_h^c(u,v) + N_h^c(v,u) + N_h^s(u,v),\\
N_h^c(u,v) & := \int_{\Gamma_h} \average{-\alpha \nabla v} \cdot n \spacejump{u} ds, \quad 
N_h^s(u,v) := \bar \alpha \frac{\lambda}{h} \int_{\Gamma_h} \spacejump{u} \spacejump{v} ds
\end{align}
\end{subequations}
for $u, v \in V_{h,\Theta}^\Gamma + \Vreg$ with $\Vreg := H^1(\Omega) \cap H^2(\Omega_{1,h} \cup \Omega_{1,h})$.

Here, $ n = n_{\Gamma_h}$ denotes the outer normal of $\Omega_{1,h}$ and 
$\bar \alpha = \frac12( \alpha_1 + \alpha_2)$ the mean diffusion coefficient. 
For the averaging operator  $\average{\cdot}$ there are different possibilities. We use 
 $\averageleft w \averageright := \kappa_1 w_{|\Omega_{1,h}} + \kappa_2w_{|\Omega_{2,h}} $ with a ``Heaviside'' choice where $\kappa_i = 1$ if $|T_i| > \frac12 |T|$ and $\kappa_i = 0$ if $|T_i| \leq \frac12 |T|$. Here, $T_i = T \cap \Omegalin_i$, i.e. the cut configuration on the undeformed mesh is used. This choice in the averaging renders the scheme in \eqref{Nitsche1} stable (for sufficiently large $\lambda$) for arbitrary polynomial degrees $k$, independent of the cut position of $\Gamma$, cf. Lemma~\ref{lemcoercive} below. A different choice for the averaging which also results in a stable scheme is $\kappa_i = |T_i|/|T|$.

In order to define  the  right hand side functional $f_h$ we first assume that the source term $f_i :\, \Omega_i \to \Bbb{R}$ in \eqref{eq:ellmodel1} is (smoothly) extended to $\Omega_{i,h}$, such that $f_i= f_{i,h}$ on $\Omega_i$ holds.  This extension is denoted by $f_{i,h}$. We define
\begin{equation} \label{eq:lfs}
  f_h(v) := \sum_{i=1,2} \int_{\Omega_{i,h}} f_{i,h} v dx.
\end{equation}
We define $f_h$ on $\Omega$ by ${f_h}_{|\Omega_{i,h}}:= f_{i,h}$, $i=1,2$.
%for given functions $f_i \in L^2(\Omega)$, $i=1,2$. Note that $f_i$ will only be evaluated on $\Omega_{i,h}$ in the discretization and they define a function $f \in L^2(\Omega)$ with $f|_{\Omega_i} = f_i$, $i=1,2$.

For the implementation of this method, in the integrals we apply a transformation of variables $y:=\Theta_h^{-1}(x)$.
This results in the following representations of the bi- and linear forms:
% For example, the bilinear form $a_h(u,v)$ then results in
\begin{subequations}  \label{atrans}
\begin{align}
 a_h(u,v)  & = \sum_{i=1,2} \alpha_i \int_{\Omegalin_i}  D\Theta_h^{-T} \nabla u \cdot  D\Theta_h^{-T} \nabla v ~ \det (D\Theta_h)\, dy, \\
 f_h(v)  & = \sum_{i=1,2} \int_{\Omegalin_i} (f_{i,h} \circ \Theta_h ) v ~ \det (D\Theta_h)\, dy, \\
N_h^c(u,v) & = \int_{\Gammalin} \det(D \Theta_h) D\Theta_h^{-T} \average{-\alpha \nabla v} \cdot (D\Theta_h^{-T} \cdot n^{\text{lin}}) \spacejump{u} dy, \label{q} \\ 
N_h^s(u,v) & = \bar \alpha \frac{\lambda}{h} \int_{\Gammalin} \mathcal{J}_\Gamma \spacejump{u} \spacejump{v} dy,
\end{align}
\end{subequations}
where $\mathcal{J}_{\Gamma}  = \det (D\Theta_h) \Vert D \Theta_h^{-T} \cdot n^{\text{lin}} \Vert$ is the ratio between the measures on $\Gamma_h$ and $\Gammalin$ and $n^{\text{lin}} = \nabla \hphi / \Vert \nabla \hphi \Vert$ is the normal to $\Gammalin$. We note that in \eqref{q} we exploited that the normalization factor $\Vert D \Theta_h^{-1} \cdot n^{\text{lin}} \Vert$ for the normal direction $n$ of $\Gamma_h$ cancels out with the corresponding term in $\mathcal{J}_{\Gamma}$.
Based on  this transformation the implementation of integrals is carried out as for the case of the piecewise planar interface $\Gammalin$. The additional variable coefficients $D \thetah^{-T}$, $\det(D \thetah)$  are easily and efficiently computable using the property that $\thetah$ is a finite element (vector) function. 
The integrands in \eqref{atrans} are in general not polynomial so that exact integration can typically not be guaranteed. For a discussion of the thereby introduced additional consistency error we refer to Remark \ref{rem:quad} in the analysis.
 
\subsection{Numerical experiment} \label{sec:numex}

In this section we present results of a numerical experiment for the method \eqref{Nitsche1}.
The domain is $\Omega = [-1.5,1.5]^2$, and the interface is given as $\Gamma = \{ \phi(x) = 0 \}$ with the level set function $\phi(x) = \Vert x \Vert_4 - 1$. Here  $\Vert x \Vert_4 := (\sum_{i=1}^d x_i^4)^{\frac14}$. The zero level of $\phi$ describes a ``smoothed square'' and $\phi$ is equivalent to a signed distance function in the sense that $ \sqrt[4]{1/2} \leq \Vert \nabla \phi \Vert_2 \leq 1$ such that $\mathrm{dist}(x,\Gamma) \leq \sqrt[4]{2} \vert \phi(x) \vert\text{ for all }x\in\Omega$.
The level set function $\phi$ is approximated with $\phi_h \in V_h^k$ by interpolation. 
For the problem in \eqref{eq:ellmodel}, we take the diffusion coefficient $(\alpha_1,\alpha_2) = (1,2)$ and Dirichlet boundary conditions and right-hand side $f$ such that the solution is given by 
\begin{equation} \label{eq:solution}
 u(x) = \left\{ \begin{array}{rc} 1 + \frac{\pi}{2} - \sqrt{2} \cdot \cos(\frac{\pi}{4} \Vert x \Vert_4^4), & \ x \in \Omega_1, \\
\frac{\pi}{2} \Vert x \Vert_4\hphantom{)}, & \ x \in \Omega_2.  \end{array} \right.
\end{equation}
We note that $u$ is continuous on $\Omega$ but has a kink across the interface $\Gamma$ such that $\alpha_1 \nabla u_1 \cdot n_\Gamma = \alpha_2 \nabla u_2 \cdot n_\Gamma$, cf. the sketch of the solution in Figure \ref{fig:sketchsol}. 

\begin{figure}
  \begin{center}
    \begin{minipage}{0.32\textwidth}
      \begin{center}
        \begin{tikzpicture}[scale=0.5]
          \begin{axis}[
            xmin=-1.5,xmax=1.5,
            ymin=-1.5,ymax=1.5,
            grid=both,
            xtick={-1.5,-1,0,1,1.5},
            ytick={-1.5,-1,0,1,1.5},
            ]
            \addplot [domain=0:360,samples=200,color=red,very thick,fill=green!40,opacity=1.0,draw opacity=1.0]
            (
            { abs(cos(x))/(cos(x))*sqrt(abs(cos(x))) },
            { abs(sin(x))/(sin(x))*sqrt(abs(sin(x))) }
            ); 

            \node [right,scale=1.5] at (axis cs:  0.7,  -0.2) {\color{red} $\Gamma$};
            \node [below left,scale=1.5] at (axis cs:  0.0,  0.0) { $\Omega_1$};
            \node [below right,scale=1.5] at (axis cs:  1.1,  -1.1) { $\Omega_2$};
            \coordinate (a) at (axis cs:  0.0,  0.0);
            \coordinate (b) at (axis cs:  1.5,  0.0);
            \draw[blue, very thick, ->] (a) -- (b);
          \end{axis}
        \end{tikzpicture}
      \end{center}
    \end{minipage}
    \begin{minipage}{0.28\textwidth}
      \begin{center}
        \tiny $u(x,0)$ \\ 
        \begin{tikzpicture}[scale=0.45]
          \begin{axis}[
            xmin=0,xmax=1.5,
            ymin=0.95,ymax=2.5,
            grid=both,
            axis line style={->},
            xtick={-1.5,-1,0,1,1.5},
           ytick={1,1.5,2,2.5},
            ]
            \addplot [domain=-1.5:1.5,samples=200,color=red,very thick,draw opacity=1.0]
            (
            { x },
            {  (abs(x)<1) * (1+pi/2.0-sqrt(2.0)*cos(45*x*x*x*x)) + (abs(x)>=1) * pi/2 * (abs(x)) }
            ); 
            \coordinate (a) at (axis cs:  0.0,  1.0);
            \coordinate (b) at (axis cs:  1.5,  1.0);
            \draw[blue, ultra thick, ->] (a) -- (b);
          \end{axis}
        \end{tikzpicture}
      \end{center}
    \end{minipage}
    \begin{minipage}{0.34\textwidth}
      \begin{center}
        \includegraphics[width=0.98\textwidth]{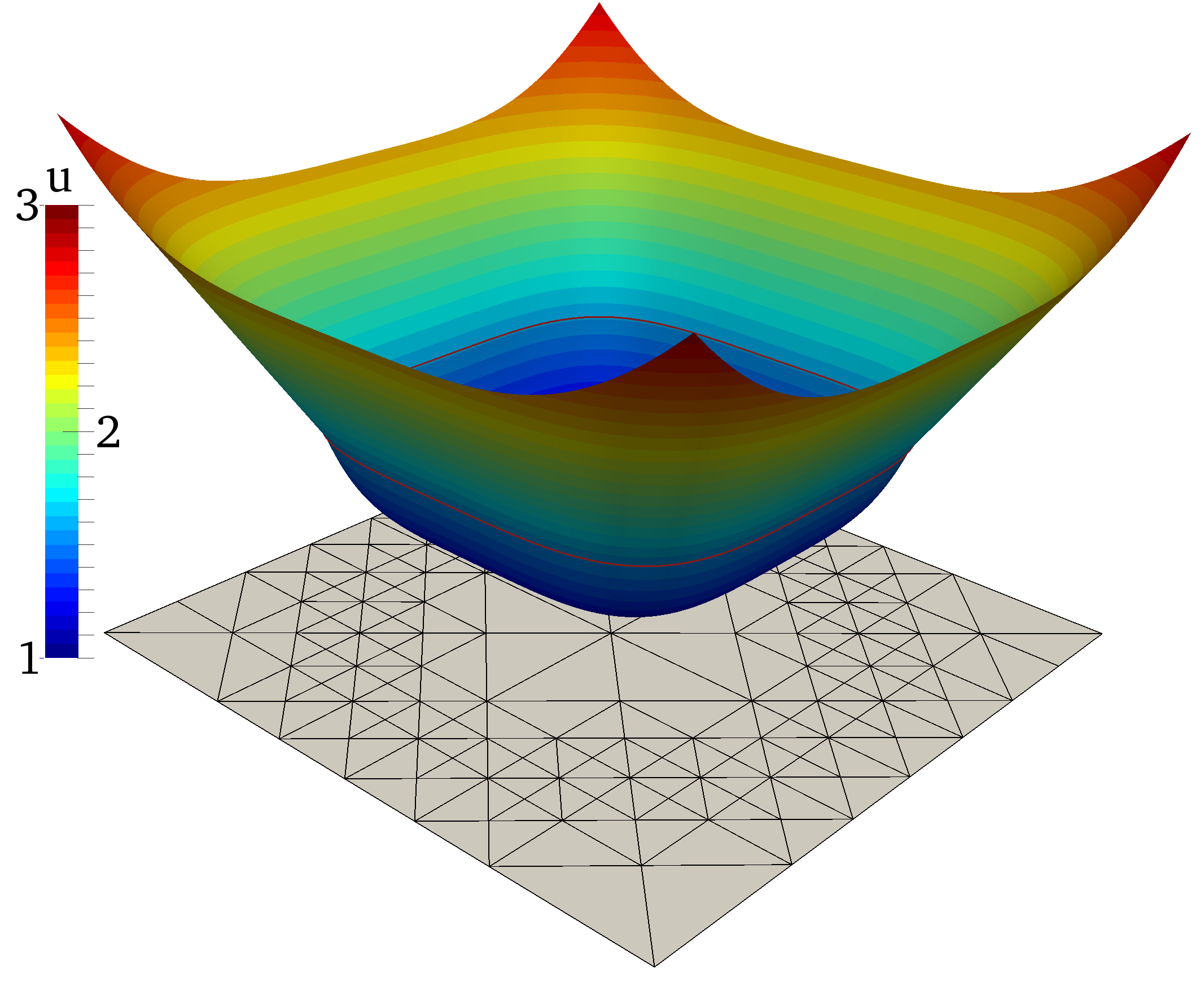}
      \end{center}
    \end{minipage}
  \end{center}
\vspace*{-0.2cm}
\caption{Sketch of domains $\Omega_1$, $\Omega_2$ (left), the solution (middle and right) and the initial mesh ($L=0$, right) for the example in Section \ref{sec:numex}.}
\label{fig:sketchsol}
\end{figure}

For the construction of the mapping $\thetah$, the one-dimensional nonlinear problem which results from \eqref{eq:psihmap} is solved with a Newton-iteration, cf. \cite[section 2.3]{lehrenfeld15}, up to a (relative) tolerance of $10^{-14}$, which required only $2-3$ iterations for each point. For the search direction  we chose $G_h = \nabla \phi_h$, cf.~\eqref{eq:gh}. We also considered $G_h=P_h^\Gamma \nabla \phi_h$, which led to very similar results.

For the computation of the integrals we apply the transformation to the reference domains $\Omegalin_i$ and $\Gammalin$ as in \eqref{atrans}. On each element of the reference domain (e.g. $\Omegalin_i \cap T$, $T \in \T_h$) we apply quadrature rules of exactness degree $2k-2$ for the integrals in $a_h(\cdot,\cdot)$ and $N_h^c(\cdot,\cdot)$ and quadrature rules of exactness degree $2k$ for the integrals in $N_h^s(\cdot,\cdot)$, see also remark \ref{rem:quad} below.

The method has been implemented in the add-on library \texttt{ngsxfem} to the finite element library \texttt{NGSolve} \cite{schoeberl2014cpp11}.

We chose an initial simplicial mesh ($L=0$) with $282$ elements which sufficiently resolves the interface such that shape regularity of the mesh (after transformation) is given without a further limitation step as discussed in \cite[section 2.6]{lehrenfeld15}. 
Starting from this initial triangulation uniform refinements are applied.
The stabilization parameter $\lambda$ in the unfitted Nitsche method is chosen as $\lambda = 20 \cdot k^2$.

With the discrete solution to \eqref{Nitsche1} $u_h \in V_{h,\Theta}^\Gamma$, and $u$ as in \eqref{eq:solution} we define the following error quantities which we can evaluate for the numerical solutions:
\begin{align*}
d_{\Gamma_h}\! :=\!  \Vert \phi \Vert_{\infty,\Gamma_h}, \quad e_{\jump{\cdot}}:= \Vert \jump{E_i u -  u_h} \Vert_{L^2(\Gamma_h)} \\
e_{L^2}^2\! :=\! \sum_{i=1,2} \Vert E_i u -  u_h\Vert_{L^2(\Omega_{i,h})}^2, \quad
e_{H^1}^2\! :=\! \sum_{i=1,2} \Vert \nabla(E_i u -  u_h) \Vert_{L^2(\Omega_{i,h})}^2.
\end{align*}
Here, $E_i:\Omega_i \rightarrow \Omega_{i,h}=\thetah(\Omegalin_i)$ denotes the canonical extension operator for the solution (using the representation in \eqref{eq:solution}). 
Due to the equivalence of $\phi$ to a signed distance function we have $\mathrm{dist}(\Gamma_h,\Gamma) \leq \sqrt[4]{2}~d_{\Gamma_h}$ and thus $d_{\Gamma_h}$ yields a (sharp) bound for the error in the geometry approximation.  The error quantities $e_{H^1}$, $d_{\Gamma_h}$ and $e_{\jump{\cdot}}$ are also used in the error analysis, cf.~\eqref{defn}.  The results in the experiment confirm the theoretical error bounds. We also include the $L^2$-error   $e_{L^2}$, although in the analysis in this paper we do not derive $L^2$-error bounds, cf.~Section~\ref{sectoutlook}.

\begin{table}[h!]
\small
\begin{center}
\begin{tabular}{r@{\ \ }r@{\ \ \ }
  r@{\ (}c@{)\ \ \ }
  r@{\ (}c@{)\ \ \ }
  r@{\ (}c@{)\ \ \ }
  r@{\ (}c@{)\ \ \ }
}
$k$ & $L$ & $d_{\Gamma_h}\qquad$  & eoc & $e_{L^2}\qquad$ & eoc & $e_{H^1}\qquad$ & eoc & $e_{\jump{\cdot}}\qquad$ & eoc\\
\toprule
1 & 0 & \num{  0.0050221} &  -  & \num{   0.0850176} &   - & \num{   0.687233} &   - & \num{ 0.00373823} &   - \\
  & 1 & \num{ 0.00223861} & 1.2 & \num{   0.0245449} & 1.8 & \num{    0.40476} & 0.8 & \num{ 0.00165237} & 1.2 \\
  & 2 & \num{ 0.00053897} & 2.0 & \num{  0.00611226} & 2.0 & \num{   0.209966} & 0.9 & \num{ 0.00038622} & 2.1 \\
  & 3 & \num{ 0.00012375} & 2.1 & \num{  0.00150264} & 2.0 & \num{   0.105699} & 1.0 & \num{0.000113794} & 1.8 \\
  & 4 & \num{3.36362e-05} & 1.9 & \num{ 0.000373006} & 2.0 & \num{  0.0534224} & 1.0 & \num{2.74341e-05} & 2.1 \\
  & 5 & \num{8.86244e-06} & 1.9 & \num{  9.3496e-05} & 2.0 & \num{  0.0267841} & 1.0 & \num{6.95582e-06} & 2.0 \\
  & 6 & \num{2.25885e-06} & 2.0 & \num{ 2.36659e-05} & 2.0 & \num{  0.0134293} & 1.0 & \num{1.75559e-06} & 2.0 \\
\midrule
2 & 0 & \num{6.993e-4} &  -  & \num{3.021E-3} &   - & \num{1.522E-1} &   - & \num{0.000113392} &   - \\
  & 1 & \num{6.514e-5} & 3.4 & \num{5.893E-4} & 2.4 & \num{5.152E-2} & 1.6 & \num{3.10019e-05} & 1.9 \\
  & 2 & \num{1.197e-5} & 2.4 & \num{7.899E-5} & 2.9 & \num{1.369E-2} & 1.9 & \num{3.65325e-06} & 3.1 \\
  & 3 & \num{2.111e-6} & 2.5 & \num{9.976E-6} & 3.0 & \num{3.446E-3} & 2.0 & \num{4.82193e-07} & 2.9 \\
  & 4 & \num{2.447e-7} & 3.1 & \num{1.265E-6} & 3.0 & \num{8.704E-4} & 2.0 & \num{5.48117e-08} & 3.1 \\
\midrule
3 & 0 & \num{  8.6985e-05} &   - & \num{0.001665610} &   - & \num{  0.0275621} &   - & \num{5.28899e-06} &   - \\
  & 1 & \num{ 1.96038e-05} & 2.1 & \num{0.000196828} & 3.1 & \num{ 0.00369957} & 2.0 & \num{5.32893e-07} & 3.3 \\
  & 2 & \num{ 1.25282e-06} & 4.0 & \num{1.17898e-05} & 4.1 & \num{0.000442553} & 3.1 & \num{2.00381e-08} & 4.7 \\
  & 3 & \num{ 6.39185e-08} & 4.3 & \num{7.14402e-07} & 4.0 & \num{5.27611e-05} & 3.1 & \num{ 1.4139e-09} & 3.8 \\
\midrule
4 & 0 & \num{ 2.01259e-05} &   - & \num{9.66239e-05} &   - & \num{ 0.00782523} &   - & \num{5.74363e-07} &   - \\
  & 1 & \num{ 1.02828e-06} & 4.3 & \num{2.55431e-06} & 5.2 & \num{0.000621062} & 3.7 & \num{3.22889e-08} & 4.2 \\
  & 2 & \num{ 2.51851e-08} & 5.4 & \num{9.79816e-08} & 4.7 & \num{ 3.5234e-05} & 4.1 & \num{1.06596e-09} & 4.9 \\
\midrule
5 & 0 & \num{ 2.47328e-06} &   - & \num{0.000106408} &   - & \num{ 0.00103943} &   - & \num{3.01411e-08} &   - \\
  & 1 & \num{  2.0333e-07} & 3.6 & \num{1.58967e-06} & 6.1 & \num{2.88465e-05} & 5.2 & \num{1.80272e-09} & 4.1 \\
\midrule
6 & 0 & \num{ 9.32775e-07} &   - & \num{1.49125e-06} &   - & \num{0.000254924} &   - & \num{ 7.1063e-09} &  -  \\
\bottomrule
\end{tabular}
\end{center}
\caption{Discretization errors and estimated orders of convergence (eoc) for the example in Section \ref{sec:numex}.}
\label{tab:numex2d}
\end{table}

Results are listed in Table \ref{tab:numex2d}. We observe  $d_{\Gamma_h} \sim \mathcal{O}(h^{k+1})$, $e_{L^2} \sim \mathcal{O}(h^{k+1})$, $e_{H^1} \sim \mathcal{O}(h^{k})$, i.e., optimal order of convergence in these  quantities. The observed convergence rate for the jump across the interface, $e_{\jump{\cdot}} \sim \mathcal{O}(h^{k+1})$, is better than predicted in the analysis below (by half an order). 
For a fixed mesh ($L=const$) we observe that increasing the polynomial degree $k$ dramatically decreases the error in all four error quantities. 
The discretization with $k=6$ on the coarsest mesh $L=0$ with only $6835$ unknowns (last row in  the table) is much more accurate in all four quantities than the discretization with $k=1$ after $6$ additional mesh refinements and $583401$ unknowns (row $k=1$, $L=6$ in the table). 
\begin{remark} \label{linsys}
\rm 
We note that the arising linear systems depend on the position of the interface within the computational mesh and can become arbitrarily ill-conditioned. To circumvent the impact of this we used a sparse direct solver for the solution of linear systems .
In our experiment the very poor  conditioning of the  stiffness matrix for high order discretizations limited the achievable accuracy to $e_{L^2} \approx \num{1e-6}$ and $e_{H^1} \approx \num{1e-6}$. Hence, we stopped the refinement if discretization errors in this order of magnitude have been reached. The solution of linear systems is an important issue for high order unfitted discretizations and requires further attention. To deal with the ill-conditioning one may consider (new) preconditioning strategies or further stabilization mechanisms in the weak formulation. We leave this as a topic for future research.
\end{remark}

\section{Error Analysis}\label{sec:erroranalysis}
The main new contribution of this paper is an error analysis of the method presented in Section~\ref{unfittedFEM}. 
We start with the definition of norms and derive properties of the discrete variational formulation in Section \ref{sec:normsplus}.
These results are obtained using the fact that the isoparametric method can be seen as a small perturbation of the standard unfitted Nitsche-XFEM method. To bound the error terms we use the bijective mapping on $\Omega$ given by $\Phi_h:= \Psi \circ \thetah^{-1}$. This mapping has the property $\Phi_h(\Gamma_h)=\Gamma$ and is close to the identity. Some further relevant properties of this mapping are treated in Section~\ref{sectmappsi}.
Using this mapping we  derive a Strang lemma type result which relates the discretization error to consistency errors in the bilinear form and right-hand side functional and to approximation properties of the finite element space, cf. Section~\ref{sectStrang}. 
In the Sections~\ref{sec:integrals} and ~\ref{approx} we prove bounds for the consistency and approximation error, respectively.  Finally, in Section~\ref{sectH1} an  optimal-order $H^1$-error bound is given.

\subsection{Norms and properties of the bilinear forms} \label{sec:normsplus}\ \\
In the error analysis we use the norm
\begin{align}
 \Vert v \Vert_{h}^2 & := \vert v \vert_{1}^2 + \Vert \jump{v} \Vert_{\frac12,h,\Gamma_h}^2 + \Vert \average{\alpha \nabla v} \Vert_{-\frac12,h,\Gamma_h}^2, \label{defn} \\
\text{ with } \Vert v \Vert_{\pm \frac12,h,\Gamma_h}^2 & := \left(\bar{\alpha} / h \right)^{\pm 1} \Vert v \Vert_{L^2(\Gamma_h)}^2 \text{ and } |v|_1^2:= \sum_{i=1,2} \alpha_i \Vert \nabla v \Vert_{L^2(\Omega_{i,h})}^2. \label{ppp}
\end{align}
Note that the norms are formulated with respect to $\Omega_{i,h} = \thetah(\Omegalin_i)$ and  $\Gamma_{h} = \thetah(\Gammalin)$ and include a scaling depending on $\alpha$.
\begin{remark} \rm
For simplicity we restrict to the setting of a quasi-uniform family of triangulations. In the more general case of a shape regular (not necessarily quasi-uniform) family of triangulations one has to replace the  interface norm in \eqref{ppp} by one with an element wise scaling:
\begin{equation} \label{defr} \Vert v \Vert_{\pm \frac12,h,\Gamma_h}^2  := \sum_{T \in \T^\Gamma} \left(\bar{\alpha} / h_T \right)^{\pm 1} \Vert v \Vert_{L^2(\Gamma_{h,T})}^2 \text{ with } \Gamma_{h,T} := \thetah(T\cap \Gammalin)
\end{equation}
\end{remark}
 
We further define the space of sufficiently smooth functions which allow for the evaluation of normal gradients at the interface $\Gamma_h$: 
\begin{equation} \label{defVreg}
\Vreg := H^1(\Omega) \cap H^2(\Omega_{1,h} \cup \Omega_{2,h}).
\end{equation}
Note that the norm $\|\cdot\|_h$ and the bilinear forms in \eqref{eq:blfs} are well-defined on $\Vreg+V_{h,\Theta}^\Gamma$. 
The stability of the method relies on an appropriate choice for the weighting operator $\average{u} = \kappa_1 u_1 + \kappa_2 u_2$ in \eqref{Nitsche1}. For the stability analysis of the weighting operator we introduce the criterion
\begin{equation} \label{eq:crit}
\kappa_i^2 \leq c_{\kappa} |T_i|/|T|,~i=1,2,
\end{equation}
with a constant $c_{\kappa}$ independent of $\alpha$, $h$ and the cut configuration. We note that this criterion is fulfilled for the Heaviside choice, cf. Section~\ref{unfittedFEM}, with $c_{\kappa}=2$ and the weighting $\kappa_i =|T_i|/|T|$ proposed in \cite{hansbo2002unfitted} with $c_{\kappa}=1$.
Using this criterion we derive the following inverse estimate, which is an important ingredient in the stability analysis of the unfitted Nitsche method.
\begin{lemma}\label{leminv}
On a shape regular (not necessarily quasi-uniform) family of triangulations 
with an averaging operator $\average{u}$ satisfying \eqref{eq:crit} there holds
\begin{equation} \label{inverse}
 \Vert \average{\alpha \nabla u} \Vert_{-\frac12,h,\Gamma_h} 
\lesssim  \vert u \vert_{1} \quad \text{for all}~ u \in V_h^\Gamma.
\end{equation}
\end{lemma}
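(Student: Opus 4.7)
The plan is to localize the estimate to cut elements $T\in\T^\Gamma$ and reduce it to a classical inverse trace inequality for polynomials on a simplex cut by a hyperplane, after first transforming from the deformed geometry back to the piecewise planar reference configuration.

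First I would use Lemma~\ref{lemF} to pass from integrals on $\Gamma_h$ and $\Omega_{i,h}$ to integrals on $\Gammalin$ and $\Omegalin_i$. Because $J_V,J_\Gamma$ are uniformly equivalent to one and $D\thetah - I$ is of order $h$, both the surface and volume measures and the gradients (via $\nabla u = F^{-T}\nabla(u\circ\thetah)$) transform with constants independent of $h$ and of how $\Gamma$ cuts $\T$. So it is enough to prove an analogous bound on the undeformed reference configuration, where each piece $u_i := u|_{\Omegalin_i}$ is genuinely polynomial of degree $k$ on every $T\in\T^\Gamma$.

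The second ingredient is an element-local inverse trace estimate. On each cut $T\in\T^\Gamma$ the polynomial $u_i$ admits a polynomial extension to all of $T$, and $\Gammalin\cap T$ is a planar piece of $T$. The classical polynomial inverse trace inequality (as used, e.g., in \cite{hansbo2002unfitted}) gives
\[
\|\nabla u_i\|_{L^2(\Gammalin\cap T)}^2 \;\lesssim\; h^{-1}\|\nabla u_i\|_{L^2(T)}^2,
\]
with constant depending only on $k$ and the shape regularity of $T$, crucially \emph{independent} of how $\Gammalin$ cuts $T$.

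The third and delicate step is to combine this with the weighting criterion \eqref{eq:crit} in order to turn $\|\nabla u_i\|_{L^2(T)}$ into $\|\nabla u_i\|_{L^2(T_i)}$, where $T_i := T\cap\Omegalin_i$. Multiplying the trace bound by $(h/\bar\alpha)\kappa_i^2\alpha_i^2$ and using $\alpha_i/\bar\alpha\lesssim 1$ reduces the whole task to showing that $\kappa_i^2\|\nabla u_i\|_{L^2(T)}^2 \lesssim \|\nabla u_i\|_{L^2(T_i)}^2$. Under the Heaviside choice, $\kappa_i\ne 0$ forces $|T_i|/|T|\ge 1/2$, and on such subregions polynomials of bounded degree satisfy $\|\nabla u_i\|_{L^2(T)}\sim\|\nabla u_i\|_{L^2(T_i)}$ by the standard equivalence of $L^2$-norms of polynomials on simplicial subregions of comparable volume; \eqref{eq:crit} absorbs the remaining constant. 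After this, summing over $T\in\T^\Gamma$ and $i=1,2$, and applying a triangle inequality to $\average{\cdot}$, yields the claim. I expect this third step to be the main obstacle: the polynomial estimate $\kappa_i^2\|\nabla u_i\|_{L^2(T)}^2\lesssim\|\nabla u_i\|_{L^2(T_i)}^2$ is elementary for the Heaviside weighting but requires additional care under general weightings satisfying \eqref{eq:crit}, since a naive application of \eqref{eq:crit} alone fails for higher polynomial degrees when $T_i$ is a very thin sliver.
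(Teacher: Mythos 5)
Your first two steps (transforming back to the undeformed configuration via Lemma~\ref{lemF}, and localizing to a single cut element) coincide with the paper's. The gap is in your third step, which is precisely the heart of the lemma, and you correctly sense it: you route the estimate as $\Gammalin\cap T \to T \to T_i$, i.e.\ a trace inequality onto the \emph{whole} element followed by the polynomial norm equivalence $\kappa_i^2\Vert q\Vert_{L^2(T)}^2\lesssim \Vert q\Vert_{L^2(T_i)}^2$. That last inequality is simply false for a general weighting satisfying \eqref{eq:crit} once $k\ge 2$: take $T_i$ a sliver with $|T_i|/|T|=\epsilon$, $\kappa_i^2=\epsilon$ (the weighting $\kappa_i=|T_i|/|T|$ is explicitly admitted by \eqref{eq:crit} with $c_\kappa=1$), and $q$ a component of $\nabla u_i$ vanishing to first order on the sliver; then $\Vert q\Vert_{L^2(T_i)}^2\sim\epsilon^3\Vert q\Vert_{L^2(T)}^2$ while you need it to be $\gtrsim\epsilon\,\Vert q\Vert_{L^2(T)}^2$. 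So your argument only covers the Heaviside choice (where $\kappa_i\neq0$ forces $|T_i|\ge|T|/2$ and a Remez-type equivalence applies), not the lemma as stated. You name this obstacle but do not resolve it, and no amount of "additional care" along the route through the full element $T$ will fix it, because the information that $\nabla u_i$ is controlled only on $T_i$ is discarded at the first step.

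The paper avoids the detour through $T$ entirely. On the reference simplex $\hat T$ it uses \eqref{eq:crit} in the form $|\hat T_i|\ge(\kappa_i^2/c_\kappa)|\hat T|$ to deduce, from $\hat T_i\subset T_{R,D}$ (a cylinder over the projection of $\hat T_i$ onto the cutting hyperplane $S$), that some vertex $x_V$ of $\hat T$ lies in $\hat T_i$ at distance $D\gtrsim\kappa_i^2/c_\kappa$ from $S$. The cone $\mathrm{conv}(\hat\Gamma,x_V)$ is contained in $\hat T_i$; decomposing $\hat\Gamma$ into $(d-1)$-simplices $G_l$ and the cone into the simplices $T_l=\mathrm{conv}(G_l,x_V)$, the sharp inverse trace inequality of Warburton--Hesthaven gives $\Vert p\Vert_{L^2(G_l)}^2\le\frac{(k+d)(k+1)}{d}\frac{|G_l|}{|T_l|}\Vert p\Vert_{L^2(T_l)}^2$ with $|G_l|/|T_l|=d/D\lesssim c_\kappa/\kappa_i^2$. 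The factor $\kappa_i^2$ in the target estimate is thus cancelled \emph{by the geometry of the trace constant itself}, uniformly in the cut position and for every admissible weighting. If you want to salvage your own outline, you should replace your third step by this cone construction; alternatively, restrict the lemma to the Heaviside weighting and supply a reference for the $L^2$ norm equivalence of polynomials on convex subsets of at least half the volume, which is not as "standard" as your sketch suggests.
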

\begin{proof}
It suffices to derive the localized estimate for an arbitrary element $T \in \T^\Gamma$. 
Note that for the general case of a not necessarily quasi-uniform family of triangulations the norm used on the left-hand side in \eqref{inverse} is as in \eqref{defr}. 
We only have to show
$$
h_T \kappa_i^2 \Vert p \Vert_{L^2(\Gamma_{h,T})}^2  \lesssim \Vert p \Vert_{L^2(\thetah(T_i))}^2 \quad \text{for all}~ p \circ \thetah \in \mathcal{P}^k(T)
$$
with $\Gamma_{h,T} := \thetah(\Gammalin_T)$, $\Gammalin_T := \Gammalin \cap T$ and $T_i := \Omegalin_i \cap T$ for $i=1,2$. With Lemma \ref{lemF} this is equivalent to the corresponding result on the undeformed domains:
$$
h_T \kappa_i^2 \Vert p \Vert_{L^2(\Gammalin_T)}^2  \lesssim \Vert p \Vert_{L^2(T_i)}^2 \quad \text{for all}~ p \in \mathcal{P}^k(T).
$$
We transform the problem to the reference element $\hat{T} := \{ x \in \rr^d | \sum_{i=1}^d x_i \leq 1, x_i \geq 0, i=1,..,d \}$ with the corresponding affine linear transformation $\Phi_T : \hat{T} \rightarrow T$, such that $T = \Phi_T(\hat{T})$. 
We introduce $J_V := |\det(D \Phi_T)|$, $J_\Gamma := J_V \Vert (D \Phi_T)^{-T}  n_{\hat{\Gamma}} \Vert_2$, $\hat{\Gamma} = \Phi_T^{-1} (\Gammalin_T)$ and $\hat{T}_i := \Phi_T^{-1} (T_i)$ and then have  
$
 \int_{T_i} p^2 \, dx = J_V \int_{\hat{T}_i} (p \circ \Phi_T)^2 \, dx,
$ and with the same arguments as in \eqref{changemeas} one gets $
 \int_{\Gammalin_T} p^2 \, ds = J_\Gamma \int_{\hat{\Gamma}} (p \circ \Phi_T)^2 \, ds. 
$
Note that
\[
  \|D \Phi_T\|_2^{-1} \leq \Vert (D \Phi_T)^{-T}  n_{\hat{\Gamma}} \Vert_2 \leq \|D \Phi_T^{-1}\|_2.
\]
Due to the assumption of shape regularity we have $ \|D \Phi_T\|_2 \lesssim h_T$, $\|D \Phi_T^{-1}\|_2 \lesssim h_T^{-1}$ and thus we get  $J_V \sim J_\Gamma h_T$. Hence,  it suffices to show
\begin{equation}\label{investhat}
\kappa_i^2 \Vert p \Vert_{\hat{\Gamma}}^2  \leq c_{k,d} \Vert p \Vert_{\hat{T}_i}^2 \quad \text{for all}~\, p \in \mathcal{P}^k(\hat{T}),~i=1,2
\end{equation}
with a constant $c_{k,d}$ depending only on the polynomial degree $k$, the dimension $d$ and the constant $c_{\kappa}$. We now prove the estimate \eqref{investhat}.

 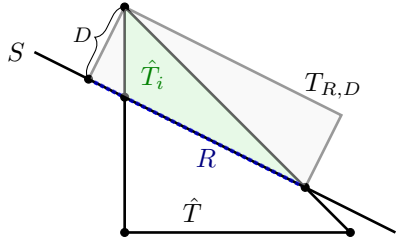
\begin{figure}
   \begin{center}
     \begin{tikzpicture}[scale=3.0]
       \coordinate (V0) at (0.0,0.0);
       \coordinate (V1) at (1.0,0.0);
       \coordinate (V2) at (0.0,1.0);
       \coordinate (V01) at (0.8,0.2);
       \coordinate (V20) at (0.0,0.6);
       \coordinate (Q1) at (-0.4,0.8);
       \coordinate (Q2) at (1.2,0.0);
       \coordinate (Q3) at (-0.16,0.68);
       \coordinate (P1) at (0.96,0.52);
       \draw[line width=1pt] (V0) -- (V1) -- (V2) --cycle;
       \draw[fill=green,opacity=0.12] (V01) -- (V20) -- (V2) --cycle;
       \draw[line width=1pt] (Q1) -- (Q2);
       \draw[line width=1.5pt,blue,densely dotted] (Q3) -- (V01);
       \filldraw (V0) circle (0.5pt);
       \filldraw (V1) circle (0.5pt);
       \filldraw (V2) circle (0.5pt);
       \filldraw (V01) circle (0.5pt);
       \filldraw (V20) circle (0.5pt);
       \filldraw (Q3) circle (0.5pt);
       \draw[fill=gray!12,line width=1pt,opacity=0.4] (Q3) -- (V2) -- (P1) -- (V01) --cycle;
       \node at (-0.4,0.8) [left] {$S$};
       \node at (0.36,0.33) {\color{blue!50!black}$R$};
       \node at (0.3,0.1) {$\hat{T}$};
       \node at (0.125,0.7) {\color{green!50!black}$\hat{T}_i$};
       \node at (0.925,0.65) {$T_{R,D}$};
       \draw [decorate,decoration={brace,amplitude=5pt},xshift=-9pt,yshift=4pt] (Q3) -- (V2) node [black,midway,above left,xshift=-2pt,yshift=-3pt] {\footnotesize $D$};
     \end{tikzpicture}
   \end{center}
   \caption{Sketch of geometries used in the proof of Lemma \ref{leminv}}
\label{proofsketch}
 \end{figure}

As $\hat{\Gamma}$ is planar $\hat{T}_i$ is a convex polytope.
Let $S$ be the hyperplane, with normal denoted by $n_S$, that contains $\hat{\Gamma}$ and $R = \{ x \in S~|~x + \alpha n_S \in \hat{T}_i, \alpha \in \rr\}$ be the normal projection of $\hat{T}_i$ into $S$, cf. Fig.~\ref{proofsketch}. 
 With $\I_i$ the subset of  those vertices of $\hat T$ that are vertices of $\hat{T}_i$ we set $D := \max_{x_V \in \I_i}\mathrm{dist}(S,x_V)$. We define the cylinder $T_{R,D} = \{ x \in \rr^d ~|~ x = y + \alpha n_S, y \in R, \alpha \in [0,D]\}$. Note that  $\hat{T}_i \subset T_{R,D}$ holds.
We can bound the volume of $T_{R,D}$ by $ |T_{R,D}| \leq |R| D \leq l^{d-1} D $, where $l$ is the maximum distance of two points in $R$ which can be bounded by the length of the longest edge of $\hat{T}$, i.e., $l \leq \sqrt{2}$. 
Combining this with  $|\hat{T}_i| \geq \frac{\kappa_i^2}{c_{\kappa}} |\hat{T}|$ from \eqref{eq:crit} we get
$$
\frac{\kappa_i^2}{c_{\kappa} d!} = \frac{\kappa_i^2}{c_{\kappa}} |\hat{T}| \leq |\hat{T}_i| \leq |T_{R,D}| \leq l^{d-1}  D ,
$$
and thus
$ D \geq c_d \frac{\kappa_i^2}{c_{\kappa}}$ with $c_d := (d!  2^{\frac{d-1}{2}})^{-1}$. From this it follows that there  exists a vertex $x_V$ of the reference simplex $\hat T$ such that  $x_V \in \hat{T}_i$ and $\mathrm{dist}(S,x_V) \geq  c_d \frac{\kappa_i^2}{c_{\kappa}}$. 

Note that $\mathrm{conv}(\hat{\Gamma},x_V) \subset \hat{T}_i$. Based on a decomposition of $\hat{\Gamma}$ into $(d-1)$-simplices $\{G_l\}_{l=1,..,L}$ we consider the (non-overlapping) decomposition of $\mathrm{conv}(\hat{\Gamma},x_V)$ into $d$-simplices  $T_l:=\mathrm{conv}(G_l,x_V)$, $l=1,..,L$. 
On $T_l$ we apply known inverse trace  estimates for polynomials, c.f. \cite{warburtonhesthaven03}:
$
  \Vert p \Vert_{L^2(G_l)}^2 \leq \frac{(k+d)(k+1)}{d} \frac{|G_l|}{|T_l|} \Vert p \Vert_{L^2(T_l)}^2
$.
With $|T_l| = \frac{D}{d} |G_l| \geq \frac{\kappa_i^2}{c_{\kappa}} \frac{c_d}{d} |G_l| $  we obtain
$\kappa_i^2 | G_l | / |T_l| \leq \frac{d}{c_d} c_\kappa$ and thus
$$
  \kappa_i^2 \Vert p \Vert_{L^2(G_l)}^2 \leq c_{\kappa} \frac{(k+d)(k+1)}{c_d} \Vert p \Vert_{L^2(T_l)}^2.
$$
By summing over $l=1,..,L$ this proves \eqref{investhat} with $c_{k,d} = c_{\kappa} d! 2^{\frac{d-1}{2}} (k+d)(k+1)$.
\end{proof}\\
\begin{remark} \rm
The condition \eqref{eq:crit} on the averaging operator is crucial for the estimate \eqref{inverse} to hold. Furthermore, the fact that the interface is piecewise planar w.r.t. the reference simplex is used in the analysis. 
We briefly comment on similar results from the literature. 
For $k=1$ and the weighting $\kappa_i =|T_i|/|T|$ the inverse estimate has been shown in \cite{hansbo2002unfitted}. 
In \cite{massjung12} the result in Lemma \ref{leminv} has been proven for the Heaviside choice $\kappa_i \in \{ 0,1 \}$ for the case of a higher order unfitted discontinuous Galerkin discretization for smooth interfaces in two dimensions. A variant of this method has been addressed in \cite{wuxiao10}. We are not aware of any literature in which such a result for higher order discretizations and dimension $d \geq 3$ has been derived.
\end{remark}
\\[1ex]
A large part of the analysis of the usual unfitted finite element methods can be carried over using the following result:
\begin{lemma} \label{lemcoercive}
For $\lambda $ sufficiently large the estimates
\begin{align}
  A_h(u,u) &\gtrsim \Vert u \Vert_h^2 && \hspace*{-2cm} \text{for all}~\, u \in V_{h,\Theta}^\Gamma, \label{eq:coerc} \\
  A_h(u,v) &\lesssim \Vert u \Vert_h \Vert v \Vert_h && \hspace*{-2cm} \text{for all}~\, u,v \in \Vreg + V_{h,\Theta}^\Gamma \label{eq:bound}
\end{align}
hold.
\end{lemma}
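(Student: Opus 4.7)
The plan is to follow the standard Nitsche-type argument, the only non-standard ingredient being the inverse trace inequality of Lemma~\ref{leminv} applied in the isoparametric setting. The fact that $V_{h,\Theta}^\Gamma$ is obtained from $V_h^\Gamma$ via composition with $\Theta_h^{-1}$, combined with the norm equivalences \eqref{jacV}--\eqref{jacgrad} in Lemma~\ref{lemF}, allows us to transfer that inverse estimate from the undeformed reference configuration to the relevant quantities on $\Gamma_h$ and $\Omega_{i,h}$.

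For the coercivity bound \eqref{eq:coerc} I would first expand, for $u\in V_{h,\Theta}^\Gamma$,
\begin{equation*}
A_h(u,u) \;=\; |u|_1^2 \;+\; 2\,N_h^c(u,u) \;+\; N_h^s(u,u)
\;=\; |u|_1^2 \;-\; 2\!\int_{\Gamma_h}\!\average{\alpha \nabla u}\!\cdot\! n\,\spacejump{u}\,ds \;+\; \frac{\bar\alpha\lambda}{h}\|\spacejump{u}\|_{L^2(\Gamma_h)}^2.
\end{equation*}
The mixed boundary term is controlled by a weighted Cauchy--Schwarz and Young inequality in the dual norms \eqref{defn},
\begin{equation*}
2\,|N_h^c(u,u)| \;\leq\; \delta\,\Vert \average{\alpha \nabla u} \Vert_{-\frac12,h,\Gamma_h}^2 \;+\; \delta^{-1}\Vert \spacejump{u} \Vert_{\frac12,h,\Gamma_h}^2,
\end{equation*}
for any $\delta>0$. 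Applying Lemma~\ref{leminv} (whose hypothesis \eqref{eq:crit} is satisfied for the Heaviside weighting with $c_\kappa=2$, and whose statement transfers to $V_{h,\Theta}^\Gamma$ by composing with $\Theta_h$ and invoking Lemma~\ref{lemF}) yields $\Vert \average{\alpha \nabla u} \Vert_{-\frac12,h,\Gamma_h}^2 \leq C_{\rm inv}\,|u|_1^2$. Choosing $\delta$ small (say $\delta=1/(2C_{\rm inv})$) and then $\lambda$ large enough so that $\lambda-\delta^{-1}\geq \lambda/2$ gives
\begin{equation*}
A_h(u,u) \;\gtrsim\; |u|_1^2 \;+\; \Vert \spacejump{u} \Vert_{\frac12,h,\Gamma_h}^2.
\end{equation*}
A final application of Lemma~\ref{leminv} shows that the missing $\Vert \average{\alpha \nabla u} \Vert_{-\frac12,h,\Gamma_h}^2$ term in $\Vert u \Vert_h^2$ is already dominated by $|u|_1^2$, which closes the argument.

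The continuity bound \eqref{eq:bound} for $u,v\in \Vreg+V_{h,\Theta}^\Gamma$ is more direct: apply Cauchy--Schwarz elementwise to $a_h(u,v)$, and to each $N_h^c$ and $N_h^s$ contribution use the pairing between $\|\spacejump{\cdot}\|_{\frac12,h,\Gamma_h}$ and $\|\average{\alpha\nabla\cdot}\|_{-\frac12,h,\Gamma_h}$. All three resulting pieces are, by definition of $\|\cdot\|_h$, bounded by $\|u\|_h\|v\|_h$; no inverse estimate is required here, which is exactly why continuity extends to the larger space $\Vreg+V_{h,\Theta}^\Gamma$ while coercivity is restricted to the discrete space.

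The main obstacle I anticipate is ensuring that Lemma~\ref{leminv}, formally stated on $V_h^\Gamma$ and the piecewise planar $\Gammalin$, can be invoked for functions in $V_{h,\Theta}^\Gamma$ whose norms involve the curved interface $\Gamma_h$ and the deformed subdomains $\Omega_{i,h}$. This is handled by pulling back with $\Theta_h$: by \eqref{jacV}, \eqref{jacG} and \eqref{jacgrad} the $L^2$-norms on $\Gamma_h$ and $\Omega_{i,h}$, and the $H^1$-seminorm on $\Omega_{i,h}$, are equivalent (with constants independent of $h$ and of the cut position) to the analogous quantities on $\Gammalin$ and $\Omegalin_i$, so the reference inverse estimate on $V_h^\Gamma$ transfers with a constant of the form $C_{\rm inv}=O(1)$ uniformly in the cut geometry. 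Everything else is bookkeeping.
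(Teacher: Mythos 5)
Your proposal is correct and follows essentially the same route as the paper: expand $A_h(u,u)$, absorb the Nitsche consistency term via Cauchy--Schwarz/Young together with the inverse estimate of Lemma~\ref{leminv} (which, as you note, is proved on the deformed configuration precisely by pulling back with $\Theta_h$ and using Lemma~\ref{lemF}), take $\lambda$ large, and recover the $\Vert \average{\alpha\nabla u}\Vert_{-\frac12,h,\Gamma_h}$ part of the norm by one more application of Lemma~\ref{leminv}; continuity is plain Cauchy--Schwarz in the dual pair of interface norms. No gaps.
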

\begin{proof}
We have
\begin{equation} \label{r1}
  a_h(u,u) \sim |u|_1^2 \quad \text{for}~u \in  \Vreg + V_{h,\Theta}^\Gamma.
\end{equation}
Furthermore, 
\begin{equation} \label{r2}
 \begin{split}
 |N_h^s(u,v)|  & \lesssim \lambda \Vert \jump{u} \Vert_{\frac12,h,\Gamma_h} \Vert \jump{v} \Vert_{\frac12,h,\Gamma_h}, \quad u,v \in H^1(\Omega),\\  
 N_h^s(u,u)  & \sim \lambda \Vert \jump{u} \Vert_{\frac12,h,\Gamma_h}^2, \quad u \in H^1(\Omega).
\end{split} \end{equation}
For the Nitsche consistency term we have, for $u,v \in \Vreg + V_{h,\Theta}^\Gamma$:
\begin{equation} \label{r3}
  |N_h^c(u,v)| \lesssim  \Vert \average{\alpha \nabla u} \Vert_{-\frac12,h,\Gammalin}\Vert \jump{v} \Vert_{\frac12,h,\Gammalin}.
\end{equation}
The result in \eqref{eq:bound} follows from the definition of $A_h(\cdot,\cdot)$ and \eqref{r1}, \eqref{r2}, \eqref{r3}.  
Using \eqref{inverse} and the results in \eqref{r1}, \eqref{r2}, \eqref{r3} we get, for $u \in V_{h,\Theta}^\Gamma$,
\begin{align*}
  A_h(u,u) & \gtrsim \vert u \vert_{1}^2 + \lambda \Vert \spacejump{u} \Vert_{\frac12,h,\Gamma_h}^2 -  \vert u \vert_{1}  \, \Vert \spacejump{u} \Vert_{\frac12,h,\Gamma_h} \\
& \gtrsim  \vert u \vert_{1}^2 + \Vert \spacejump{u} \Vert_{\frac12,h,\Gamma_h}^2,
\end{align*}
provided $\lambda$ is chosen sufficiently large. Using \eqref{inverse} again we obtain the estimate
 \eqref{eq:coerc}.
\end{proof}
\ \\[1ex]
In the remainder we assume that $\lambda$ is taken sufficiently large such that the results in Lemma~\ref{lemcoercive} hold. Then the discrete problem \eqref{Nitsche1} has a unique solution. 

\subsection{The bijective mapping $\Phi_h= \Psi \circ \thetah^{-1}$} \label{sectmappsi}
From the properties derived in the sections~\ref{sec:globaltrafoPsi} and \ref{sec:globaltrafo} it follows that, for $h$ sufficiently small, the mapping $\Phi_h= \Psi \circ \thetah^{-1}$ is bijection on $\Omega$ and has the property $\Phi_h(\Gamma_h)=\Gamma$. In the remainder we assume that $h$ is sufficiently small such that $\Phi_h$ is a bijection. It has the smoothness property $\Phi_h \in C(\Omega)^d \cap C^{k+1}(\T)^d$.  In the following lemma we derive  further properties of $\Phi_h$ that will be needed in the error analysis.
\begin{lemma} \label{lem:phihbounds}
 The following holds:
  \begin{align}
    \|\thetah - \Psi\|_{\infty,\Omega}+ h\|D(\thetah - \Psi)\|_{\infty,\Omega} & \lesssim h^{k+1} \label{estt1} \\
     \|\Phi_h - \id\|_{\infty,\Omega}+ h\|D\Phi_h - I\|_{\infty,\Omega} & \lesssim h^{k+1}.\label{estt2} 
  \end{align}
\end{lemma}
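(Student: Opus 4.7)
The plan is to prove the two estimates separately, decomposing $\Omega$ into the three regions on which $\thetah$ and $\Psi$ are defined by different formulas, namely $\OGamma$, $\OGammaplus \setminus \OGamma$, and $\Omega \setminus \OGammaplus$. On $\Omega \setminus \OGammaplus$ both mappings equal the identity, so there is nothing to check. On $\OGamma$ we have $\thetah - \Psi = \thetahGamma - \PsiGamma$, and Lemma~\ref{lem4} gives the required bound, specifically the $r=0$ and $r=1$ terms of the sum there.

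The main work is the estimate on $\OGammaplus \setminus \OGamma$. Here, by linearity of the extension operator,
\[
\thetah - \Psi = \mathcal{E}^{\partial\OGamma}(\thetahGamma - \id) - \mathcal{E}^{\partial\OGamma}(\PsiGamma - \id) = \mathcal{E}^{\partial\OGamma}(\thetahGamma - \PsiGamma),
\]
so I will apply Theorem~\ref{exthm} with $w = \thetahGamma - \PsiGamma$. The derivative bounds $h^r \|D^r(\thetahGamma - \PsiGamma)\|_{\infty,F} \lesssim h^{k+1}$ for $r=0,\ldots,k+1$ come from Lemma~\ref{lem4}. For the pointwise vertex values in the right-hand side of \eqref{estext1}, note that $\thetahGamma(x_i) = x_i$ for every vertex of $\partial\OGamma$ since $d_h(x_i)=0$ by \eqref{resd1}, whereas $\PsiGamma(x_i) - x_i = d(x_i)G(x_i)$ with $|d(x_i)| \lesssim h^{k+1}$ by \eqref{resd4a0}. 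Substituting these ingredients into Theorem~\ref{exthm} with $n=0$ and $n=1$ yields $\|\thetah - \Psi\|_{\infty,\OGammaplus\setminus \OGamma} \lesssim h^{k+1}$ and $\|D(\thetah - \Psi)\|_{\infty,\OGammaplus\setminus \OGamma} \lesssim h^{k}$, which combine to give \eqref{estt1}.

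For \eqref{estt2} I use the identity $\Phi_h - \id = (\Psi - \thetah) \circ \thetah^{-1}$, which immediately gives $\|\Phi_h - \id\|_{\infty,\Omega} = \|\Psi - \thetah\|_{\infty,\Omega} \lesssim h^{k+1}$ from the bound just proved. For the derivative, the chain rule and $D(\thetah^{-1}) = (D\thetah)^{-1}\circ \thetah^{-1}$ yield
\[
D\Phi_h - I = \bigl(D\Psi - D\thetah\bigr)\circ \thetah^{-1} \cdot (D\thetah)^{-1}\circ \thetah^{-1}.
\]
By \eqref{estD}, for $h$ small enough $\|(D\thetah)^{-1}\|_{\infty,\Omega} \lesssim 1$, so $\|D\Phi_h - I\|_{\infty,\Omega} \lesssim \|D(\Psi - \thetah)\|_{\infty,\Omega} \lesssim h^{k}$, and multiplying by $h$ completes the proof.

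The main obstacle is the treatment of the extension region $\OGammaplus \setminus \OGamma$: one has to correctly track the different origins of the two contributions in Theorem~\ref{exthm}, namely the derivative seminorms on faces (which inherit their decay rate from Lemma~\ref{lem4}) and the vertex term (which inherits its decay rate from the mismatch $\PsiGamma(x_i) - \thetahGamma(x_i) = d(x_i)G(x_i)$ through \eqref{resd4a0}). All other steps are short consequences of linearity, the chain rule, and results established in earlier sections.
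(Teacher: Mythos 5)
Your proof is correct and follows essentially the same route as the paper: the same three-region decomposition, the same application of Theorem~\ref{exthm} to $w=\thetahGamma-\PsiGamma$ combined with Lemma~\ref{lem4}, and the same identity $\Phi_h-\id=(\Psi-\thetah)\circ\thetah^{-1}$ for \eqref{estt2}. The only (harmless) deviation is that you bound the vertex term via $d_h(x_i)=0$ and \eqref{resd4a0}, whereas the paper simply absorbs $\max_{x_i}|(\thetahGamma-\PsiGamma)(x_i)|$ into $\max_{T\in\TGamma}\Vert\thetahGamma-\PsiGamma\Vert_{\infty,T}\lesssim h^{k+1}$ from Lemma~\ref{lem4}.
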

\begin{proof}
  Using the definition $\Psi=\mathcal{E}\PsiGamma$, $\thetah=\mathcal{E}\thetahGamma$ and the results in Theorem~\ref{exthm} and Lemma~\ref{lem4} we get the result in \eqref{estt1} with
  \begin{align*}
\|\thetah - \Psi\|_{\infty,\Omega} & + h\|D(\thetah - \Psi)\|_{\infty,\Omega} \\ & \lesssim \|\thetahGamma - \PsiGamma\|_{\infty,\OGamma} + h \|D(\thetahGamma - \PsiGamma)\|_{\infty,\OGamma}  \\ & \qquad +\max_{F \in \F(\partial \OGamma)} \sum_{r=0}^{k+1} h^{r} \|D^r (\thetahGamma - \PsiGamma)\|_{\infty,F} +\max_{x_i \in \V(\partial \OGamma)}|(\thetahGamma-\PsiGamma)(x_i)| \\
  & \lesssim \sum_{r=0}^{k+1} h^r \max_{T \in \TGamma}\Vert D^r( \thetahGamma - \Psi^\Gamma) \Vert_{\infty,T} 
\lesssim h^{k+1}.
 \end{align*}
%   \begin{align*}
%    \|\thetah - \Psi\|_{\infty,\Omega}  & \lesssim \|\thetahGamma - \PsiGamma\|_{\infty,\OGamma} \\ & \quad  +\max_{F \in \F(\partial \OGamma)} \sum_{r=0}^{k+1} h^{r} \|D^r (\thetahGamma - \PsiGamma)\|_{\infty,F} +\max_{x_i \in \V(\partial \OGamma)}|(\thetahGamma-\PsiGamma)(x_i)| \\
%   & \lesssim \sum_{r=0}^{k+1} h^r \max_{T \in \TGamma}\Vert D^r( \thetahGamma - \Psi^\Gamma) \Vert_{\infty,T} 
% \lesssim h^{k+1},
%   \end{align*}
%   and similarly,
%   \begin{align*}
%     h\|D(\thetah - \Psi)\|_{\infty,\Omega}  & \lesssim h \|D(\thetahGamma - \PsiGamma)\|_{\infty,\OGamma}  \\ &  +\max_{F \in \F(\partial \OGamma)} \sum_{r=1}^{k+1} h^{r} \|D^r (\thetahGamma - \PsiGamma)\|_{\infty,F} +\max_{x_i \in \V(\partial \OGamma)}|(\thetahGamma-\PsiGamma)(x_i)| \\
%   & \lesssim \sum_{r=0}^{k+1} h^r \max_{T \in \TGamma}\Vert D^r( \thetahGamma - \Psi^\Gamma) \Vert_{\infty,T} 
% \lesssim h^{k+1}.
%  \end{align*}
Note that $\Phi_h-\id=(\Psi-\thetah)\thetah^{-1}$. From  the results in  \eqref{bpsi1} and \eqref{estt1} it follows that $\|\thetah^{-1}\|_{\infty,\Omega} \lesssim 1$, $\|D\thetah^{-1}\|_{\infty,\Omega} \lesssim 1$. Hence, the estimate in \eqref{estt2} follows from the one in \eqref{estt1}.
\end{proof}
\\[1ex]
\begin{remark} \rm With similar arguments as used in the proof above one can also derive the bound
\[
 \max_{T \in \T} \Vert D^l (\Phi_h - \id) \Vert_{\infty,T} = \max_{T \in \T} \Vert D^l \Phi_h  \Vert_{\infty,T}  \lesssim h^{k+1-l}, \qquad T \in \T, ~ l=2,..,k+1,
\]
but we do not need this in the error analysis.
\end{remark}

\subsection{Strang lemma} \label{sectStrang}
In this section we derive a  Strang lemma in which the discretization error is related to approximation and geometry errors. We use the homeomorphism $\Phi_h: \Omega \to \Omega $ with the property $\Phi_h(\Gamma_h)=\Gamma$. 

We define $\Vregphi :=H^1(\Omega)\cap  H^2(\Omega_1 \cup \Omega_2) $, cf. \eqref{defVreg}.
Related to  $\Phi_h$ we define 
\[ V_{h,\Phi}^\Gamma := \{ v \circ \Phi_h^{-1}, v \in V_{h,\Theta}^\Gamma\} \subset H^1(\Omega_1 \cup \Omega_2),
\]
and the linear and bilinear forms
\begin{equation}
A(u,v) := a(u,v) + N(u,v), \quad f(v) := \sum_{i=1,2} \int_{\Omega_i} f_i v \, dx, \quad u,v \in V_{h,\Phi}^\Gamma + \Vregphi,
\end{equation}
with the bilinear forms $a(\cdot,\cdot)$, $N(\cdot,\cdot)$ as in \eqref{eq:blfs} with $\Omega_{i,h}$ and $\Gamma_h$ replaced with $\Omega_i$ and $\Gamma$, respectively. In the following lemma a consistency result is given.
\begin{lemma} \label{lemconsis}
Let $u\in \Vregphi$ be a solution of \eqref{eq:ellmodel}.  The following holds:
\begin{equation} \label{eq:cons1}
  A(u,v) = f(v) \quad \text{for all}~ \, v \in V_{h,\Phi}^\Gamma + \Vregphi.
\end{equation}
\end{lemma}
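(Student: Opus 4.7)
The plan is to follow the standard consistency calculation for symmetric Nitsche methods, using the fact that the exact solution $u$ inherits both interface conditions in \eqref{eq:ellmodel2} and extra smoothness from the assumption $u\in V_{\text{reg}}=H^1(\Omega)\cap H^2(\Omega_1\cup\Omega_2)$.

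First I would dispose of the two terms in $A(u,v)$ in which $u$ appears under a jump. Since $V_{\text{reg}}\subset H^1(\Omega)$ the trace of $u$ from $\Omega_1$ agrees with the trace from $\Omega_2$ on $\Gamma$, so $\jumpleft u\jumpright_\Gamma=0$ pointwise a.e. on $\Gamma$. This immediately gives $N^c(v,u)=0$ and $N^s(u,v)=0$ for every admissible $v$, so $A(u,v)=a(u,v)+N^c(u,v)$ is all that has to be matched with $f(v)$.

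Next I would integrate by parts in each subdomain separately — this is legitimate because $u\in H^2(\Omega_i)$ and $v\in H^1(\Omega_i)$ (both $V_{h,\Phi}^\Gamma$ and $V_{\text{reg}}$ have this regularity on each $\Omega_i$, with possible jumps only across $\Gamma$). Using $-\divergence(\alpha_i\nabla u)=f_i$ in $\Omega_i$ and the fact that $v=0$ on $\partial\Omega$ (so the outer boundary contributions vanish), I obtain
\begin{equation*}
a(u,v)=f(v)+\sum_{i=1,2}\int_{\Gamma}\alpha_i\nabla u_i\cdot n_i\, v_i\,ds,
\end{equation*}
where $n_i$ denotes the outer normal of $\Omega_i$ on $\Gamma$, so $n_1=n_\Gamma=-n_2$. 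Combining the two boundary integrals into jump form yields
\begin{equation*}
\sum_{i=1,2}\int_{\Gamma}\alpha_i\nabla u_i\cdot n_i\, v_i\,ds=\int_{\Gamma}\jumpleft \alpha\nabla u\cdot n_\Gamma\, v\jumpright_\Gamma\,ds.
\end{equation*}

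Now I would apply the product identity $\jumpleft ab\jumpright=\jumpleft a\jumpright\averageleft b\averageright+\averageleft a\averageright\jumpleft b\jumpright$, which holds whenever the averaging weights satisfy $\kappa_1+\kappa_2=1$; both the Heaviside choice and $\kappa_i=|T_i|/|T|$ used in the paper meet this condition. Invoking the flux condition $\jumpleft \alpha\nabla u\jumpright_\Gamma\cdot n_\Gamma=0$ from \eqref{eq:ellmodel2} to kill the first summand, and $\jumpleft u\jumpright_\Gamma=0$ in the average (so only the $\averageleft \alpha\nabla u\averageright\cdot n_\Gamma\jumpleft v\jumpright$ term survives), I arrive at
\begin{equation*}
a(u,v)=f(v)+\int_{\Gamma}\averageleft \alpha\nabla u\averageright\cdot n_\Gamma\,\jumpleft v\jumpright_\Gamma\,ds=f(v)-N^c(v,u).
\end{equation*}
Since $N^c(v,u)$ appears with a plus sign in $N(u,v)$, adding it to $a(u,v)$ gives $A(u,v)=f(v)$, which is \eqref{eq:cons1}.

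I expect no serious obstacle here: the only points that require a moment of care are checking that the traces needed in the integration by parts are well defined (this is ensured by $V_{\text{reg}}$ having $H^2$-regularity on each subdomain and $V_{h,\Phi}^\Gamma\subset H^1(\Omega_1\cup\Omega_2)$ by construction), and verifying the weight condition $\kappa_1+\kappa_2=1$ so that the jump-product identity is applicable. Both are already in place from Section~\ref{unfittedFEM}.
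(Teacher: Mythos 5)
Your proof is correct and follows essentially the same route as the paper's: kill the two Nitsche terms containing $\jump{u}$, integrate by parts subdomain-wise using \eqref{eq:ellmodel1}, and use the flux continuity to reduce the interface term to $\average{\alpha\nabla u}\cdot n_\Gamma\,\jump{v}$, which cancels the remaining consistency term (the paper runs the identical computation in the opposite direction, starting from $A(u,v)-f(v)$ and converting the average--jump product into a single jump before integrating by parts). One small imprecision: the identity $\jump{ab}=\jump{a}\average{b}+\average{a}\jump{b}$ is false for non-symmetric weights $\kappa_1\neq\kappa_2$ (the second average must carry the complementary weights), but this is harmless here because $\jump{\alpha\nabla u\cdot n_\Gamma}=0$ already gives $\jump{ab}=\average{a}\jump{b}$ for any weights summing to one.
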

\begin{proof}
Take $v \in V_{h,\Phi}^\Gamma + \Vregphi$. Since $\spacejump{u}_{|\Gamma}=0$ a.e. on $\Gamma$, we get $N^s(u, v)= N^c(v,u)=0$. Thus we obtain
\begin{align*}
  A(u,v)- f(v) &  = a(u,v)+ N^c(u, v) - \sum_{i=1,2} \int_{\Omega_i} f_i v \, dx \\
& = \sum_{i=1}^2 \int_{\Omega_i} \alpha_i \nabla u \cdot \nabla v - f_i v \, dx +\int_{\Gamma}  \average{-\alpha \nabla  u \cdot n} \spacejump{ v} \, ds\\
& = \sum_{i=1}^2 \int_{\Omega_i} \alpha_i \nabla u \cdot \nabla v - f v_i \, dx+ \int_{\Gamma} \spacejump{-\alpha \nabla  u \cdot n v} \, ds,
\end{align*}
where in the last equality we used the flux continuity in \eqref{eq:ellmodel2}. Applying partial integration on $\Omega_i$ and using \eqref{eq:ellmodel1} results in
\[
  A(u, v)- f(v)=\sum_{i=1}^2 \int_{\Omega_i}- \div (\alpha_i \nabla u) v - f_i v \, dx =0,
\]
which completes the proof.
\end{proof}
\ \\
\begin{lemma}\label{lem:strang}
Let $u\in \Vregphi$ be a solution of \eqref{eq:ellmodel} and $u_h \in V_{h,\Theta}^\Gamma$ the solution of \eqref{Nitsche1}. The following holds:
\begin{equation}\label{eq:strang}
\begin{split}
\Vert u \circ \Phi_h - u_h \Vert_h \lesssim &  \inf_{v_h \in V_{h,\Theta}^\Gamma} \Vert u\circ \Phi_h - v_h \Vert_h \\ 
 & + \sup_{w_h \in V_{h,\Theta}^\Gamma} \frac{ | f_h(w_h) - f(w_h \circ \Phi_h^{-1}) | }{\Vert w_h \Vert_h} \\
 & + \sup_{w_h \in V_{h,\Theta}^\Gamma} \frac{ | A_h(u\circ \Phi_h,w_h) - A(u,w_h \circ \Phi_h^{-1}) | }{\Vert w_h \Vert_h}.
\end{split}
\end{equation}
\end{lemma}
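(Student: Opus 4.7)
\bigskip
\noindent\textbf{Proof plan.}
The estimate has the standard Strang form, so the plan is to follow the classical Strang route: reduce to a discrete error, apply coercivity on the discrete space, and then insert the exact solution so that the consistency defect (bilinear form and right hand side) appears explicitly. The one nonstandard aspect is that the continuous problem is posed on $\Omega_1 \cup \Omega_2$ while the discrete problem lives on $\Omega_{1,h}\cup\Omega_{2,h}$; the bijection $\Phi_h$ with $\Phi_h(\Gamma_h)=\Gamma$ is used as the bridge, and the whole argument will be carried out for the pulled-back function $u\circ\Phi_h$ on the discrete geometry.

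First, I would fix an arbitrary $v_h \in V_{h,\Theta}^\Gamma$ and split
\[
u\circ\Phi_h - u_h \;=\; (u\circ\Phi_h - v_h) \;+\; (v_h - u_h),
\]
set $e_h := v_h - u_h \in V_{h,\Theta}^\Gamma$, and estimate the two pieces in $\|\cdot\|_h$ separately. The first piece is harmless: by the triangle inequality it is bounded by $\|u\circ\Phi_h - v_h\|_h$, and taking the infimum over $v_h$ eventually produces the approximation term on the right hand side of \eqref{eq:strang}. For this to make sense I would first check that $u\circ\Phi_h$ lies in $\Vreg$ (so that $\|\cdot\|_h$ is well-defined on it): continuity of $u$ across $\Gamma$ together with $\Phi_h(\Gamma_h)=\Gamma$ gives $\spacejump{u\circ\Phi_h}_{\Gamma_h}=0$, while the piecewise $C^{k+1}$-smoothness of $\Phi_h$ (Section~\ref{sectmappsi}) together with $u\in\Vregphi$ yields $u\circ\Phi_h \in H^1(\Omega)\cap H^2(\Omega_{1,h}\cup\Omega_{2,h}) = \Vreg$.

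For the discrete piece I would apply coercivity and then Galerkin orthogonality in the form of \eqref{Nitsche1}: since $e_h \in V_{h,\Theta}^\Gamma$, Lemma~\ref{lemcoercive} gives
\[
\|e_h\|_h^2 \;\lesssim\; A_h(e_h,e_h) \;=\; A_h(v_h,e_h) - A_h(u_h,e_h) \;=\; A_h(v_h,e_h) - f_h(e_h).
\]
Inserting $\pm\, A_h(u\circ\Phi_h,e_h)$ and using continuity \eqref{eq:bound} for the first difference produces
\[
\|e_h\|_h^2 \;\lesssim\; \|u\circ\Phi_h - v_h\|_h\,\|e_h\|_h \;+\; \bigl|A_h(u\circ\Phi_h,e_h) - f_h(e_h)\bigr|.
\]
The remaining consistency defect is then treated by adding and subtracting the continuous bilinear form and right hand side evaluated at the pull-back $e_h\circ\Phi_h^{-1}\in V_{h,\Phi}^\Gamma$. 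Because $u\in\Vregphi$ and $e_h\circ\Phi_h^{-1}\in V_{h,\Phi}^\Gamma + \Vregphi$, Lemma~\ref{lemconsis} yields $A(u,e_h\circ\Phi_h^{-1}) = f(e_h\circ\Phi_h^{-1})$, so
\[
A_h(u\circ\Phi_h,e_h) - f_h(e_h) \;=\; \bigl[A_h(u\circ\Phi_h,e_h) - A(u,e_h\circ\Phi_h^{-1})\bigr] \;+\; \bigl[f(e_h\circ\Phi_h^{-1}) - f_h(e_h)\bigr].
\]

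Dividing through by $\|e_h\|_h$, taking the supremum over $w_h = e_h\in V_{h,\Theta}^\Gamma$ in the last two terms, and then combining with $\|u\circ\Phi_h - v_h\|_h$ via a final triangle inequality $\|u\circ\Phi_h - u_h\|_h \le \|u\circ\Phi_h - v_h\|_h + \|e_h\|_h$ and taking the infimum over $v_h$ yields \eqref{eq:strang}. The only subtle point is checking that all manipulations take place in the correct spaces, i.e.\ that $u\circ\Phi_h\in\Vreg$ and $e_h\circ\Phi_h^{-1}\in V_{h,\Phi}^\Gamma\subset \Vregphi + V_{h,\Phi}^\Gamma$, so that both $A_h(\cdot,\cdot)$ (with its Nitsche consistency term containing $\nabla u\circ\Phi_h$ on $\Gamma_h$) and $A(\cdot,\cdot)$ (with $\nabla u$ on $\Gamma$) and the associated consistency relation of Lemma~\ref{lemconsis} are all legitimate; this is where the regularity $\Phi_h\in C(\Omega)^d\cap C^{k+1}(\T)^d$ from Section~\ref{sectmappsi} is used. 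Beyond that verification, the argument is a routine coercivity-plus-consistency manipulation, so I do not expect a serious obstacle here.
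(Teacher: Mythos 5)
Your proposal is correct and follows essentially the same route as the paper's proof: triangle inequality with an arbitrary $v_h$, coercivity \eqref{eq:coerc} on the discrete error, continuity \eqref{eq:bound}, and then insertion of the exact consistency relation from Lemma~\ref{lemconsis} via the pull-back $w_h\circ\Phi_h^{-1}$ to split the defect into the two consistency terms. The only difference is cosmetic (you work with $e_h=v_h-u_h$ instead of $w_h=u_h-v_h$) plus your explicit check that $u\circ\Phi_h\in\Vreg$, which the paper leaves implicit.
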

\begin{proof}
The proof is along the same lines as in the well-known Strang Lemma. We use the notation $\tilde u =u \circ \Phi_h$ and start with the triangle inequality, where we use an arbitrary $v_h \in V_{h,\Theta}^\Gamma$:
\begin{equation*}
\Vert \tilde u - u_h \Vert_h \leq \Vert \tilde u - v_h \Vert_h  + \Vert v_h - u_h \Vert_h.
\end{equation*}
With $V_{h,\Theta}^\Gamma$-coercivity, cf. \eqref{eq:coerc}, we have
\begin{equation} \label{eq:strang2}
\begin{split}
\Vert u_h - v_h \Vert_h^2 & \lesssim A_h(u_h - v_h,u_h - v_h) \stackrel{w_h=u_h-v_h}{=} A_h(u_h-v_h,w_h) \\
& = \left| A_h(\tilde u-v_h,w_h) \right| + \left| A_h(\tilde u,w_h) - f_h(w_h) \right|.
\end{split}
\end{equation}
Using continuity, \eqref{eq:bound}, and dividing by $\Vert w_h \Vert_h = \Vert u_h - v_h \Vert_h$ results in
\begin{equation*}
\Vert \tilde u - u_h \Vert_h \lesssim \inf_{v_h \in V_{h,\Theta}^\Gamma} \Vert \tilde u - v_h \Vert_h + \sup_{w_h \in V_{h,\Theta}^\Gamma} \frac{ | A_h(\tilde u,w_h) - f_h(w_h) | }{\Vert w_h \Vert_h}.
\end{equation*}
Using the consistency property of Lemma~\ref{lemconsis} yields
\begin{align*}
& |A_h(\tilde u,w_h) - f_h( w_h)| \\
& \qquad = |A_h(\tilde u,w_h) - f_h( w_h) - \underbrace{\left(A(u,w_h \circ \Phi_h^{-1}) - f(w_h\circ \Phi_h^{-1})\right)}_{=0}| \\ 
& \qquad \leq |A_h( \tilde u, w_h) - A(u, w_h \circ \Phi_h^{-1})| + |f_h( w_h) - f(w_h \circ \Phi_h^{-1})|
\end{align*}
which completes the proof.
\end{proof}
\\
\begin{remark} \rm
\label{rem:quad}
In practice we will commit an additional variational crime due to the numerical integration of integrands as in \eqref{atrans} which are in general not polynomial. We briefly discuss how to incorporate the related additional error terms in the error analysis without going into all the technical details. The procedure is essentially the same as in Sect. 25-29 in \cite{CiarletHandbook}.
We consider $A_h^q$ and $f_h^q$ the bilinear and linear forms which approximate $A_h$ and $f_h$ with numerical integration. 
The solution obtained by quadrature is denoted as $u_h^q \in V_{h,\Theta}^\Gamma$ which satisfies $A_h^q(u_h^q,v_h) = f_h^q(v_h)$ for all $v_h \in V_{h,\Theta}^\Gamma$.
For the numerical integration we consider quadrature rules that have a certain exactness degree  on the (cut) elements in  the reference geometries $\Omega^{\text{lin}}_i$ and $\Gamma^{\text{lin}}$. For the integral in \eqref{atrans} this would be a quadrature of exactness degree $2k-2$. 
The additional terms $D\thetah^{-T}$ and $\mathrm{det}(D\thetah)$ stemming from the transformation are close to $\id$ and $1$ respectively, cf. the estimates in Lemma \ref{lemF}. As a consequence we have that $A_h(u,u) \sim A_h^q(u,u)$ on $V_{h,\Theta}^\Gamma$ and thus we get coercivity of $A_h^q(\cdot,\cdot)$ w.r.t. $\|\cdot\|_h$ on $V_{h,\Theta}^\Gamma$.
The arguments in the proof of the Strang Lemma~\ref{lem:strang} apply with $A_h(\cdot,\cdot)$ replaced by  $ A_h^q(\cdot,\cdot)$. Hence  we have to bound consistency terms as in \eqref{eq:strang} with $A_h(\cdot,\cdot)$, $f_h$ replaced by $A_h^q(\cdot,\cdot)$ and $f_h^q$, respectively. This can be done by combining the techniques used in the proof of Lemma~\ref{lemconsist} below with the ones from \cite{CiarletHandbook}. 
\end{remark}

\subsection{Consistency errors} \label{sec:integrals} 
We derive bounds for the consistency error terms on the right-hand side in the Strang estimate \eqref{eq:strang}.
\begin{lemma} \label{lemconsist}
Let $u \in  \Vregphi$ be a solution of \eqref{eq:ellmodel}. We assume $f \in H^{1,\infty}(\Omega_1 \cup \Omega_2)$ and the data extension source term $f_h$ in \eqref{eq:lfs} satisfies $\|f_h\|_{H^{1,\infty}(\Omega_{1,h}\cup \Omega_{2,h})} \lesssim \|f\|_{H^{1,\infty}(\Omega_1 \cup \Omega_2)}$. The following estimates hold for $w_h \in V_{h,\Theta}^\Gamma$:
\begin{subequations}
\begin{align}
 |A(u,w_h\circ \Phi_h^{-1}) - A_h(u\circ \Phi_h,w_h)| & \lesssim h^k \Vert u \Vert_{H^2(\Omega_1 \cup \Omega_2)} \Vert w_h \Vert_h \label{p1}
\\
 |f(w_h\circ \Phi_h^{-1})-f_h(w_h)| & \lesssim h^{k} \|f\|_{H^{1,\infty}(\Omega_1 \cup \Omega_2)} \| w_h \Vert_h \label{p2}.
\end{align}
\end{subequations}
\end{lemma}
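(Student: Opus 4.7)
The plan is to compare the exact and discrete forms by pulling every integral over $\Omega_i$ or $\Gamma$ back to $\Omega_{i,h}$ or $\Gamma_h$ via the change of variables $x = \Phi_h(y)$. Since $\Phi_h$ is a bijection on $\Omega$ with $\Phi_h(\Omega_{i,h}) = \Omega_i$ and $\Phi_h(\Gamma_h) = \Gamma$, each pulled-back integrand equals the corresponding integrand in $A_h$ or $f_h$ multiplied by geometric factors involving $\det D\Phi_h$, $D\Phi_h^{-T}$ and, on the interface, $\det D\Phi_h\,\|D\Phi_h^{-T} n^{\mathrm{lin}}\|$. By Lemma~\ref{lem:phihbounds} all of these factors are $I + O(h^k)$ in $L^\infty$, which is the source of the $h^k$ rate in both bounds.

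For \eqref{p2}, the change of variables produces
\[
f(w_h\circ\Phi_h^{-1}) - f_h(w_h) = \sum_{i=1,2}\int_{\Omega_{i,h}} \bigl(\det D\Phi_h\,(f_i\circ\Phi_h) - f_{i,h}\bigr)\, w_h\, dy.
\]
I would bound the integrand pointwise by $C h^k\|f\|_{H^{1,\infty}}$: the estimate $|\det D\Phi_h-1|\lesssim h^k$ handles one piece, while the difference $|f_i(\Phi_h(y))-f_{i,h}(y)| = |f_{i,h}(\Phi_h(y))-f_{i,h}(y)| \lesssim h^{k+1}\|f\|_{H^{1,\infty}}$ follows from $f_{i,h}|_{\Omega_i}=f_i$ (so that $f_i\circ\Phi_h = f_{i,h}\circ\Phi_h$) combined with Lipschitz continuity of $f_{i,h}$ on $\Omega_{i,h}$ and $|y-\Phi_h(y)|\lesssim h^{k+1}$. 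Cauchy--Schwarz and a Poincar\'e estimate $\|w_h\|_{L^2(\Omega)}\lesssim \|w_h\|_h$ then close \eqref{p2}.

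For \eqref{p1}, I would use $A=a+N$ with $N(u,v)=N^c(u,v)+N^c(v,u)+N^s(u,v)$. Because $\spacejump{u}_\Gamma = 0$ and, since $\Phi_h(\Gamma_h)=\Gamma$, also $\spacejump{u\circ\Phi_h}_{\Gamma_h}=0$, the two pieces $N^c(u,\cdot)$ and $N^s(u,\cdot)$ vanish identically both in $A(u,\cdot)$ and in $A_h(u\circ\Phi_h,\cdot)$; only $a$ and $N^c(\,\cdot\,,u)$ survive. Pulling back the bulk term gives
\[
a(u,w_h\circ\Phi_h^{-1}) = \sum_{i=1,2} \alpha_i \int_{\Omega_{i,h}} \nabla(u\circ\Phi_h)^T M_V \nabla w_h\, dy, \quad M_V := \det D\Phi_h\,D\Phi_h^{-1}D\Phi_h^{-T},
\]
with $\|M_V-I\|_{\infty}\lesssim h^k$; Cauchy--Schwarz together with $\|\nabla(u\circ\Phi_h)\|_{L^2(\Omega_{i,h})}\lesssim \|\nabla u\|_{L^2(\Omega_i)}$ then yields $\lesssim h^k \|u\|_{H^1}\|w_h\|_h$. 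The surface pair is treated identically: using the pulled-back normal $n\circ\Phi_h = D\Phi_h^{-T} n^{\mathrm{lin}}/\|D\Phi_h^{-T} n^{\mathrm{lin}}\|$ together with exactly the same cancellation of the factor $\|D\Phi_h^{-T} n^{\mathrm{lin}}\|$ between surface measure and normalization that is already exhibited in \eqref{q}, the difference reduces to $\int_{\Gamma_h}\average{-\alpha (M_V-I)\nabla(u\circ\Phi_h)}\cdot n^{\mathrm{lin}}\spacejump{w_h}\,ds$. Cauchy--Schwarz in the scaled norms $\|\cdot\|_{\pm\frac12,h,\Gamma_h}$ combined with the standard trace estimate $\|\nabla u\|_{L^2(\Gamma)}\lesssim \|u\|_{H^2(\Omega_1\cup\Omega_2)}$ gives a bound $\lesssim h^{k+\frac12}\|u\|_{H^2}\|w_h\|_h$, which is actually half an order sharper than claimed in \eqref{p1}.

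The main technical obstacle I anticipate is the bookkeeping on the interface: one has to verify that after the pull-back the residual perturbation is literally the same symmetric matrix $M_V-I$ already controlled in the bulk estimate. This relies on the Heaviside weights $\kappa_i$ being defined via the undeformed cut configuration, so that the weights in $\average{\cdot}$ are identical in $N$ and $N_h$ and the average commutes with the pull-back. Once this cancellation (an extension of the one already exhibited in \eqref{q}) is in place, the remaining steps are routine Cauchy--Schwarz, trace inequalities, and change-of-variables estimates on the smooth domains $\Omega_i$.
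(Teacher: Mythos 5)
Your proposal follows essentially the same route as the paper's proof: pull everything back to $\Omega_{i,h}$ and $\Gamma_h$ via $\Phi_h$, reduce both the bulk and interface discrepancies to the single perturbation matrix $\det(D\Phi_h)\,D\Phi_h^{-1}D\Phi_h^{-T}-I$ bounded by $h^k$ via Lemma~\ref{lem:phihbounds}, exploit $\spacejump{u}_\Gamma=\spacejump{u\circ\Phi_h}_{\Gamma_h}=0$ to kill the penalty and one consistency term, use the normal-transformation/measure cancellation for the surviving Nitsche term (obtaining the same $h^{k+\frac12}$ gain the paper records in its proof), and handle $f$ through $|\det D\Phi_h-1|\lesssim h^k$ plus the Lipschitz bound $\|f_{i,h}\circ\Phi_h-f_{i,h}\|_\infty\lesssim h^{k+1}\|f\|_{H^{1,\infty}}$. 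The only cosmetic slip is writing $n^{\rm lin}$ where the relevant normal is $n_{\Gamma_h}$ (the pull-back here is from $\Gamma$ to $\Gamma_h$, not to $\Gamma^{\rm lin}$); otherwise the argument matches the paper's.
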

\begin{proof} The proof, which is elementary, is given in the Appendix.
\end{proof}

\subsection{Approximation error}\label{approx}
In this section we derive a bound for the approximation error, i.e., the first term on the right-hand side in the Strang estimate \eqref{eq:strang}.
%First, we recall a famous result from \cite{CiarletHandbook}.
%\begin{theorem}\label{thm:interpoloptimal}
%  Let $\T$ be a shape regular family of triangulations and $\Vert \Phi_h - \id \Vert \stackrel{h \rightarrow 0}{\longrightarrow} 0 $ and $\Vert D^l \Phi_h \Vert_{\infty,T} \lesssim 1,~\forall T \in \T, ~l=0,..,k+1$. Then there holds for $\mathcal{I}_k$ a suitable interpolation operator of order $k$ on $T \in T$ and  $l \leq k+1$ 
%  \begin{equation}
%    \vert v - (\mathcal{I}_k(v\circ \Phi_h)) \circ \Phi_h^{-1} \vert_{l,\Phi_h(T)} \lesssim h^{k+1-l} |v|_{k+1,\Phi_h(T)}, \quad \forall v \in %H^{k+1}(\Phi_h(T)).
%  \end{equation}
%\end{theorem}
For this we use the curved finite element space $V_{h,\Psi}=\{\, v_h \circ \Psi_h^{-1}~|~v_h \in V_h^k\,\}$ introduced in \eqref{FEcurvedPsi} and the corresponding optimal interpolation operator $\Pi_h$, cf.~\eqref{errorBernardi}. We also use the corresponding unfitted finite element space $V_{h,\Psi}^\Gamma:=\{\, v_h \circ \Psi_h^{-1}~|~v_h \in V_h^\Gamma\,\}$, cf.\eqref{transfspace}.
%Clearly, for optimal convergence orders in the discretization error bounds (using piecewise poynomials of degree $k$) we need sufficient smoothness  of the solution $u$ in  the subdomains $\Omega_i$. A reasonable assumption is $u \in H^{k+1}(\Omega_1 \cup \Omega_2)$.
% For $k=2$ we need a slightly stronger assumption $u \in H^{3,\infty}(\Omega_1 \cup \Omega_2)$. This is caused by the fact that in the proof of Lemma~\ref{lemapprox}  we need a smooth extension $u_i^e$ of $u_{|\Omega_i}$ with the regularity property $u_i^e\in H^{2,\infty}(\Omega) \cap H^{k+1}(\Omega)$.

\begin{lemma} \label{lemapprox} For
  % $u$ be given with $u \in H^{3,\infty}(\Omega_1 \cup \Omega_2)$ if $k=2$, and
  $u \in H^{k+1}(\Omega_1 \cup \Omega_2)$
  % if $k \geq 3$.
  the following holds:
\begin{equation}\label{approxerrro}
  \inf_{v_h \in V_{h,\Theta}^\Gamma} \Vert u\circ \Phi_h - v_h \Vert_h =\inf_{v_h \in V_{h,\Psi}^\Gamma} \Vert (u- v_h)\circ \Phi_h \Vert_h  \lesssim h^k  \|u\|_{H^{k+1}(\Omega_1 \cup \Omega_2)}.
%   \begin{cases}
%   \|u\|_{H^{3,\infty}(\Omega_1 \cup \Omega_2)} &~~\text{if}~~k=2, \\                                                                                
%     \|u\|_{H^{k+1}(\Omega_1 \cup \Omega_2)} & ~~\text{if}~~k \geq 3. 
% \end{cases}
\end{equation}
\end{lemma}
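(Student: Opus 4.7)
\noindent\textbf{Proof plan for Lemma~\ref{lemapprox}.}
The identity of the two infima is a direct consequence of the relation $\Phi_h = \Psi \circ \Theta_h^{-1}$: for $v_h = w_h \circ \Theta_h^{-1} \in V_{h,\Theta}^\Gamma$ with $w_h \in V_h^\Gamma$, setting $\tilde v_h := w_h \circ \Psi^{-1} \in V_{h,\Psi}^\Gamma$ gives $\tilde v_h \circ \Phi_h = w_h \circ \Theta_h^{-1} = v_h$, and this correspondence is a bijection between $V_{h,\Theta}^\Gamma$ and $V_{h,\Psi}^\Gamma$. So it suffices to construct a good $\tilde v_h \in V_{h,\Psi}^\Gamma$ and bound $\|(u-\tilde v_h)\circ \Phi_h\|_h$.

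The construction proceeds by extension plus interpolation on each subdomain separately. Using a Stein extension one obtains $u_i^e \in H^{k+1}(\Omega)$ with $u_i^e|_{\Omega_i}=u|_{\Omega_i}$ and $\|u_i^e\|_{H^{k+1}(\Omega)}\lesssim \|u\|_{H^{k+1}(\Omega_i)}$ for $i=1,2$. Applying the interpolation operator $\Pi_h$ of~\eqref{errorBernardi} to each $u_i^e$ and splitting according to $\Omega_i=\Psi(\Omega_i^{\mathrm{lin}})$, one defines $\tilde v_h\in V_{h,\Psi}^\Gamma$ by $\tilde v_h|_{\Omega_i}:=(\Pi_h u_i^e)|_{\Omega_i}$, so that \eqref{errorBernardi} yields
\begin{equation*}
\|u-\tilde v_h\|_{L^2(\Omega_i)}+h\|u-\tilde v_h\|_{H^1(\Omega_i)}\lesssim h^{k+1}\|u\|_{H^{k+1}(\Omega_i)}, \qquad i=1,2.
\end{equation*}

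The three contributions in $\|(u-\tilde v_h)\circ \Phi_h\|_h$ are then estimated by passing from the discrete geometry to the exact one via $\Phi_h$. By Lemma~\ref{lem:phihbounds} we have $\|D\Phi_h-I\|_{\infty,\Omega}\lesssim h^{k+1}$, whence $\det(D\Phi_h)\sim 1$ uniformly and the change of variables $y=\Phi_h(x)$ (with $\Phi_h(\Omega_{i,h})=\Omega_i$, $\Phi_h(\Gamma_h)=\Gamma$) gives $|(u-\tilde v_h)\circ\Phi_h|_1\sim |u-\tilde v_h|_{H^1(\Omega_1\cup\Omega_2)}$ and analogous equivalences on $\Gamma_h$ vs.\ $\Gamma$ for the boundary terms; the $H^1$-contribution is then bounded by $h^k\|u\|_{H^{k+1}(\Omega_1\cup\Omega_2)}$ directly from the interpolation estimate above. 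For the jump term, one uses that $u$ is continuous across $\Gamma$ so that $\jump{u-\tilde v_h}|_\Gamma=(u_1^e-\Pi_h u_1^e)|_\Gamma-(u_2^e-\Pi_h u_2^e)|_\Gamma$, and applies the standard (shape-regular) trace inequality
\begin{equation*}
\|w\|_{L^2(\Gamma)}^2\lesssim h^{-1}\|w\|_{L^2(\Omega)}^2+h\|\nabla w\|_{L^2(\Omega)}^2
\end{equation*}
to $w=u_i^e-\Pi_h u_i^e$, yielding $\|w\|_{L^2(\Gamma)}^2\lesssim h^{2k+1}\|u\|_{H^{k+1}(\Omega_i)}^2$ and hence $(\bar\alpha/h)\|\jump{u-\tilde v_h}\|_{L^2(\Gamma)}^2\lesssim h^{2k}\|u\|_{H^{k+1}(\Omega_1\cup\Omega_2)}^2$. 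For the flux term, the same trace inequality applied to $\nabla(u_i^e-\Pi_h u_i^e)$, together with element-wise interpolation bounds $\|D^2(u_i^e-\Pi_h u_i^e)\|_{L^2(\Omega)}\lesssim h^{k-1}\|u_i^e\|_{H^{k+1}(\Omega)}$ inherited from the isoparametric interpolation theory of~\cite{bernardi1989optimal}, gives $\|\nabla(u_i^e-\Pi_h u_i^e)\|_{L^2(\Gamma)}^2\lesssim h^{2k-1}\|u\|_{H^{k+1}(\Omega_i)}^2$ and thus $(h/\bar\alpha)\|\average{\alpha\nabla(u-\tilde v_h)}\|_{L^2(\Gamma)}^2\lesssim h^{2k}\|u\|_{H^{k+1}(\Omega_1\cup\Omega_2)}^2$.

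The main obstacle is the flux term, since the Bernardi interpolation estimate~\eqref{errorBernardi} is stated only for the $L^2$- and $H^1$-norms, whereas controlling $\|\nabla(u-\tilde v_h)\|_{L^2(\Gamma)}$ requires (piecewise) $H^2$-information on the interpolation error on a strip around $\Gamma$. This is the place where one must invoke the \emph{element-wise} interpolation bounds on the curved elements in $V_{h,\Psi}$, which follow from the regularity of $\Psi$ established in Theorem~\ref{ThmPsi} together with standard scaling arguments on the reference simplex; all other steps are routine combinations of the change-of-variables estimates in Lemma~\ref{lem:phihbounds} and Lemma~\ref{lemF} with elementary trace inequalities.
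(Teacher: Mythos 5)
Your proposal follows essentially the same route as the paper's proof: Stein extension of $u_i$ to all of $\Omega$, application of the interpolation operator $\Pi_h$ from \eqref{errorBernardi}, restriction to the subdomains to build $v_h \in V_{h,\Psi}^\Gamma$, and trace inequalities of the form $\|w\|_{L^2(\Gamma)}^2 \lesssim h^{-1}\|w\|_{L^2(\Omega)}^2 + h\|w\|_{H^1(\Omega)}^2$ for the jump and flux terms, with the passage between $\Gamma_h$, $\Omega_{i,h}$ and $\Gamma$, $\Omega_i$ handled via $\Phi_h$. One small slip: Lemma~\ref{lem:phihbounds} gives $\|D\Phi_h - I\|_{\infty,\Omega} \lesssim h^{k}$, not $h^{k+1}$, though this still yields $\det(D\Phi_h)\sim 1$ as you need; your explicit treatment of the flux term via element-wise $H^2$ interpolation bounds on the curved elements is in fact more detailed than the paper, which at that point only refers to ``very similar arguments'' in \cite{hansbo2002unfitted}.
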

\vspace*{-0.5cm}
\begin{proof}
The identity in \eqref{approxerrro} follows from the definitions of the spaces $V_{h,\Theta}^\Gamma$, $V_{h,\Psi}^\Gamma$. 
The analysis is along the same lines as known in the literature, e.g. \cite{hansbo2002unfitted,reusken08}. 
Let $\Pi_h$ be the  interpolation operator in $V_{h,\Psi}$ and $R_i$ the restriction operator $R_iv:=v_{|\Omega_{i}}$.
We use the bounded linear extension operators $E_i:\, H^{k+1}(\Omega_i) \to H^{k+1}(\Omega)$, cf., for example, Theorem II.3.3 in \cite{GaldibookNS} and define $u_i^e := E_i u_i,~i=1,2$.
% We define
% \[
%   u^e_i:= E_i(u_{|\Omega_i}) \in H^{k+1}(\Omega),\text{and}~~ u^e~~\text{by}~~u^e_{|\Omega_{i,h}}:=(u^e_i)_{|\Omega_{i,h}}, ~~i=1,2.
% \]
% These extensions have the property
We define $v_h := \sum_{i=1}^2 R_i \Pi_h u_i^e  \in V_{h,\Psi}^\Gamma$. 
Note that due to  \eqref{errorBernardi} we have a optimal interpolation error bounds for $u_i^e  \in H^{k+1}(\Omega)$.
We get:
\begin{align*}
  |(u - v_h)\circ \Phi_h|_1^2& \lesssim  \sum_{i=1}^2 \alpha_i \|\nabla (u_i  - R_i \Pi_h u_i^e )\|_{L^2(\Omega_{i})}^2 \\
                         %=  \sum_{i=1}^2 \alpha_i \|\nabla (u_i^e - I_k u_i^e)\|_{L^2(\Omega_{i,h})}^2 \\
 & \lesssim \sum_{i=1}^2 \|\nabla ( u_i^e  - \Pi_h u_i^e)\|_{L^2(\Omega)}^2
 \lesssim h^{2k} \sum_{i=1}^2 \| u_i^e \|_{H^{k+1}(\Omega)}^2.
\end{align*}
% where in the last inequality we used \eqref{io}.
This yields the desired bound for the $|\cdot|_1$ part of the norm $\|\cdot\|_h$. As in \cite{hansbo2002unfitted}, we obtain
\begin{align*}
  \Vert \jump{(u- v_h)\circ \Phi_h} \Vert_{\frac12,h,\Gamma_h} &= \left( \bar \alpha / h \right)^\frac12 \|\jump{(u - v_h)\circ \Phi_h}\|_{L^2(\Gamma_h)}  \lesssim h^{- \frac12} \sum_{i=1}^2 \| u_i^e  - \Pi_h u_i^e \|_{L^2(\Gamma)} \\
& \lesssim \sum_{i=1}^2\Big( h^{-1} \| u_i^e - \Pi_h u_i^e \|_{L^2(\Omega)} +\| u_i^e  - \Pi_h u_i^e \|_{H^1(\Omega)}\Big) \\
& \lesssim h^k \sum_{i=1}^2 \| u_i^e \|_{H^{k+1}(\Omega)}  \lesssim h^k \sum_{i=1}^2 \| u_i \|_{H^{k+1}(\Omega_i)}. 
\end{align*}
Using very similar arguments, cf.~ \cite{hansbo2002unfitted}, the same bound can be derived for the  term $\Vert \average{(u- v_h)\circ \Phi_h}\Vert_{-\frac12,h,\Gamma_h}$. Thus we get $\|(u-v_h)\circ \Phi_h\|_h \lesssim h^k  \| u\|_{H^{k+1}(\Omega_1 \cup \Omega_2)} $ which completes the proof. % and combining this with the result in \eqref{kkk} completes the proof.
\end{proof}
\subsection{Discretization error bounds} \label{sectH1}
Based on the Strang estimate and the results derived in the subsections above we immediately obtain the following theorem, which is the main result of this paper.
% We use the same notation as in \eqref{io}:
% \[
%  S(u):= \begin{cases}
%   \|u\|_{H^{3,\infty}(\Omega_1 \cup \Omega_2)} &~~\text{if}~~k=2, \\                                                                                
%     \|u\|_{H^{k+1}(\Omega_1 \cup \Omega_2)} & ~~\text{if}~~k \geq 3. \end{cases} 
% \]

\begin{theorem} \label{mainthm}
  Let $u$ be the solution of \eqref{eq:ellmodel} and $u_h \in V_{h,\Theta}^\Gamma$  the solution of \eqref{Nitsche1}.
  % We assume that $u \in H^{3,\infty}(\Omega_1 \cup \Omega_2)$ if $k=2$,
  % $u \in H^{k+1}(\Omega_1 \cup \Omega_2)$ if $k \geq 3$, and $f\in H^{1,\infty}(\Omega_1\cup\Omega_2)$.
We assume that  $u\in H^{k+1}(\Omega_1 \cup \Omega_2)$ and the data extension $f_h$ satisfies the condition in Lemma~\ref{lemconsist}.  Then the following holds:
\begin{equation}\label{main1}
\Vert u\circ \Phi_h - u_h \Vert_h \lesssim h^k( \| u \|_{H^{k+1}(\Omega_1 \cup \Omega_2)} + \|f\|_{H^{1,\infty}(\Omega_1\cup\Omega_2)}).
\end{equation}
% |u- u_h \circ \Phi_h^{-1}|_{H^1(\Omega_1 \cup \Omega_2)} &\lesssim h^k(S(u) + \|f\|_{H^{1,\infty}(\Omega_1\cup\Omega_2)}) \label{cor1} \\

\end{theorem}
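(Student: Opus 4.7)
The proof is essentially a direct synthesis of the three preceding results: the Strang-type lemma, the consistency bounds, and the approximation estimate. The plan is to apply Lemma~\ref{lem:strang} to the exact solution $u\in H^{k+1}(\Omega_1\cup\Omega_2)\subset \Vregphi$ and the discrete solution $u_h$, which gives the three-term decomposition
\[
 \Vert u\circ\Phi_h-u_h\Vert_h \lesssim \inf_{v_h\in V_{h,\Theta}^\Gamma}\Vert u\circ\Phi_h-v_h\Vert_h + \sup_{w_h}\tfrac{|f_h(w_h)-f(w_h\circ\Phi_h^{-1})|}{\Vert w_h\Vert_h}+\sup_{w_h}\tfrac{|A_h(u\circ\Phi_h,w_h)-A(u,w_h\circ\Phi_h^{-1})|}{\Vert w_h\Vert_h}.
\]

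Next I would handle each term in turn. For the infimum (approximation error) I invoke Lemma~\ref{lemapprox}, which yields the bound $h^k\Vert u\Vert_{H^{k+1}(\Omega_1\cup\Omega_2)}$. For the two consistency suprema I invoke Lemma~\ref{lemconsist}: the bilinear-form consistency gives $h^k\Vert u\Vert_{H^2(\Omega_1\cup\Omega_2)}\Vert w_h\Vert_h$, and the right-hand side consistency gives $h^k\Vert f\Vert_{H^{1,\infty}(\Omega_1\cup\Omega_2)}\Vert w_h\Vert_h$. Dividing by $\Vert w_h\Vert_h$ and taking the supremum absorbs the norm. Noting that $\Vert u\Vert_{H^2(\Omega_1\cup\Omega_2)}\leq\Vert u\Vert_{H^{k+1}(\Omega_1\cup\Omega_2)}$ for $k\geq 1$, all three contributions sum to the right-hand side of \eqref{main1}.

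The only prerequisites that must be verified before quoting the three lemmas are (i)~that $u\in\Vregphi$, which follows from the regularity assumption $u\in H^{k+1}(\Omega_1\cup\Omega_2)$ combined with $u\in H_0^1(\Omega)$ as guaranteed by the weak formulation; (ii)~that $\lambda$ has been chosen sufficiently large so that the coercivity and continuity in Lemma~\ref{lemcoercive} hold (this is standing assumption after that lemma); and (iii)~that $h$ is small enough for $\Phi_h$ to be a bijection on $\Omega$ with $\Phi_h(\Gamma_h)=\Gamma$, which is the standing assumption of Section~\ref{sectmappsi}. No new estimate is needed; the argument is a one-line combination of the cited results.

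There is no real obstacle here — all the hard work has been moved into the previously proved lemmas (construction of $\Phi_h$, Strang lemma, consistency analysis on the mapped geometry, and the unfitted interpolation estimate lifted via $\Psi$). I would therefore keep the proof to a single short paragraph stating the Strang decomposition and citing the three preceding lemmas.
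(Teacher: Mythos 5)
Your proposal is correct and follows exactly the paper's own (one-line) proof: apply the Strang Lemma~\ref{lem:strang} and bound the three resulting terms with Lemma~\ref{lemapprox} and Lemma~\ref{lemconsist}. The extra care you take in checking the prerequisites ($u\in\Vregphi$, $\lambda$ large enough, $h$ small enough for $\Phi_h$ to be a bijection) is consistent with the standing assumptions the paper makes before the theorem.
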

\begin{proof}
The result directly follows from the Strang Lemma~\ref{lem:strang} and the bounds derived in Lemma~\ref{lemconsist} and Lemma~\ref{lemapprox}. 
\end{proof}
% \ \\[1ex]
% In the following corollary we present two closely related $H^1$ error bounds.
% \begin{corollary}
% Under the assumptions of Theorem \ref{mainthm} the following holds:
% \begin{align}
% |u- u_h \circ \Phi_h^{-1}|_{H^1(\Omega_1 \cup \Omega_2)} &\lesssim h^k(S(u) + \|f\|_{H^{1,\infty}(\Omega_1\cup\Omega_2)}) \label{cor1} \\
% |u^e - u_h |_{H^1(\Omega_{i,h} \cup \Omega_{i,h})} &\lesssim h^k(S(u) + \|f\|_{H^{1,\infty}(\Omega_1\cup\Omega_2)}),
% \label{cor2}
% \end{align}
% where $u^e$ is the extension of $u$ introduced in the proof of Lemma~\ref{lemapprox}.
% \end{corollary}
% \begin{proof}
% With \eqref{eq:phihinv1} we have 
% \begin{equation*}
% | (u-u_h \circ \Phi_h^{-1})|_{H^1(\Omega_i)} 
% = | (u \circ \Phi_h -u_h) \circ \Phi_h^{-1}|_{H^1(\Omega_i)}
% \sim | (u \circ \Phi_h -u_h)|_{H^1(\Omega_{i,h})}
% \end{equation*}
% for $i=1,2$ and thus  the estimate \eqref{cor1} follows from \eqref{main1}. Further, with
% $$
% | u^e - u_h |_{H^1(\Omega_{i,h})} \leq | u^e - u \circ \Phi_h|_{H^1(\Omega_{i,h})}  + | u \circ \Phi_h - u_h|_{H^1(\Omega_{i,h})} 
% $$
% and the estimates in \eqref{kkk} and \eqref{main1} the result \eqref{cor2} follows.
% \end{proof}\ \\[1ex]

\section{Discussion and outlook} \label{sectoutlook}
We presented a rigorous error analysis of a high order unfitted finite element method applied to a model interface problem. The key component in the method is a parametric mapping $\thetah$, which transforms the piecewise linear planar interface approximation $\Gammalin$ to a higher order approximation $\Gamma_h$ of the exact interface $\Gamma$. This mapping is based on a new local level set based mesh transformation combined with an extension technique known from isoparametric finite elements. The corresponding isoparametric
 unfitted finite element space is used for discretization and combined with a standard Nitsche technique for enforcing continuity of the solution across the interface in a weak sense. The discretization error analysis is based on a Strang Lemma. 
For handling the geometric error a suitable bijective mapping $\Phi_h$ on $\Omega$ is constructed which maps the numerical interface $\Gamma_h$  to the exact interface $\Gamma$ and is sufficiently close to the identity. The construction of this mapping is based on the same approach as used in the construction of $\thetah$. The main results of the paper are the optimal high order ($H^1$-norm) error bounds given in Section~\ref{sectH1}. 

The derivation of an optimal order $L^2$-norm error bound, based on a duality argument, will be presented in a forthcoming paper. For this we need an improvement of  the consistency error bound in \eqref{p1}, which is  of order  $\mathcal{O}(h^k)$, to a bound of order $\mathcal{O}(h^{k+1})$. 

The methodology presented in this paper, especially the parametric mapping and the error analysis of the geometric errors, can also be applied in other settings with unfitted finite element methods, for example, fictitious domain methods \cite{L_ARXIV_2016}, unfitted FEM for  Stokes interface problems \cite{LPWL_PAMM_2016} and trace finite element methods for surface PDEs \cite{GLR16}.

\section{Appendix} \ \\
\subsection{Proof of Lemma~\ref{propertiesdh}} \label{proof:propertiesdh}
%  \todo[inline]{Include proof part from itracefem paper}
First we prove existence of a unique solution $d_h(x)$ of \eqref{eq:psihmap}. 
For $|\alpha| \leq \alpha_0 h$, with $\alpha_0 >0$ fixed, we introduce the polynomial
\[
  p(\alpha):=  \mathcal{E}_T \phi_h(x + \alpha G_h(x)) - \hat \phi_h (x)
\]
for a fixed $x\in  T \in \TGamma$. From \eqref{err1} it follows that 
\begin{equation} \label{estt} 
| \hat \phi_h (x) - \phi(x)| \lesssim h^2.
\end{equation}
 Furthermore, with $y:=x +\alpha G_h(x)$, we have $\|x-y\|_2 \lesssim h$, and using a Taylor expansion and \eqref{err2} we obtain (for $\xi \in (0,1)$)
\begin{equation} \label{Taylor}
 \begin{split}
  \big|  (\mathcal{E}_T \phi_{h})(y)-\phi(y)\big| = & \ \big| \sum_{m=0}^k \frac{1}{m!} D^m( \mathcal{E}_T \phi_{h}-\phi)(x)(y-x, \ldots,y-x) \\ & \ + \frac{1}{(k+1)!}D^{k+1}\phi(x+\xi(y-x))(y-x,\ldots,y-x)\big|
\\
 \lesssim & \sum_{m=0}^k |\phi_h -\phi|_{m, \infty,T} \|y-x\|_2^m + \|y-x\|_2^{k+1} \lesssim h^{k+1}.
\end{split}
\end{equation} 
Thus we get $p(\alpha)= \phi(x+ \alpha G_h(x))- \phi(x) +\mathcal{O}(h^2)$, where the constant in $\mathcal{O}(\cdot)$ is independent of $x$. Using  Lemma~\ref{lem2} and \eqref{eq1} we obtain 
\begin{equation} \label{lk} 
p(\alpha)=  \alpha \|\nabla \phi(x)\|_2^2 + \mathcal{O}(h^2).
\end{equation}
 Hence, there exists $h_0 >0$ such that for all $0< h\leq h_0$ and all $x\in  T \in \TGamma$ the equation $p(\alpha)=0$ has a unique solution $\alpha =:d_h(x)$ in $[-\alpha_0 h,\alpha_0 h]$. The result in \eqref{resd4} follows from \eqref{lk}. The smoothness property $d_h \in C^\infty(T)$, $T \in \TGamma$, follows from the fact that $G_h$ and $\hat \phi_h$ are polynomials on $T$ and $\mathcal{E}_T \phi_h$ is a global polynomial. \\
We note that for each vertex $x_i$ of $T \in \T$ we have $\mathcal{E}_T \phi_h(x_i) = \phi_h(x_i) = \hphi(x_i)$ and it follows that $d_h(x_i)=0$ solves \eqref{eq:psihmap} and hence \eqref{resd1} holds. 
% On the FE nodes $x_i \in N(\TGammaplus)\setminus N(\TGamma)$ we have $ \hphi(x_i) = \phi_h(x_i) = \mathcal{E}_T\phi_h(x_i+0)$ and hence \eqref{resd2}. Finally, we note that $ \hphi |_{\partial \OGammaplus}$ is a continuous piecewise polynomial function and uniquely defined by the values at the FE nodes in $\partial \OGammaplus$, given by $\phi_h(x_i), x_i \in N(\OGammaplus) \cap \partial \OGammaplus$. Hence, for $T \in \TGammaplus$ there holds $I(\hphi,\phi_h)|_{T \cap \partial \OGammaplus} = \phi_h |_{T \cap \partial \OGammaplus} = \mathcal{E}_T \phi_h |_{T \cap \partial \OGammaplus}$ which implies \eqref{resd3}. 
We finally consider \eqref{resd6}. We differentiate the relation \eqref{eq:psihmap} on $T$. We skip the argument $x$ in the notation and use $y_h:=x+d_h(x) G_h(x)$:
\begin{equation} \label{kjj} 
 \nabla(\mathcal{E}_T\phi_h)(y_h) + \nabla d_h \nabla (\mathcal{E}_T\phi_h)(y_h)^TG_h + d_h DG_h^T \nabla (\mathcal{E}_T\phi_h)(y_h) = \nabla \hat \phi_h.
\end{equation}
Using a Taylor expansion as in \eqref{Taylor} we get $
  |\nabla(\mathcal{E}_T\phi_h)(y_h)-\nabla \phi(y_h)| \lesssim h^k $. Using \eqref{resd4}, \eqref{eqlem2} and rearranging terms in \eqref{kjj} we get (for $k \geq 2$):
\[
  \nabla d_h \big(\|\nabla \phi\|_2^2 + \mathcal{O}(h^2)\big)= \nabla \hat \phi_h - \nabla(\mathcal{E}_T\phi_h)(y_h)-d_h DG_h^T \nabla (\mathcal{E}_T\phi_h)(y_h).
\]
From the estimates derived above and the smoothness assumption on $\phi$ it easily follows that the second and third term on the right-hand side are uniformly bounded by a constant and by $ch^2$, respectively. 
We further have $\nabla \hphi = \nabla \phi + \nabla \hphi - \nabla \phi$ with 
$\Vert \nabla \hphi - \nabla \phi \Vert_{\infty} \lesssim h$ and $\Vert \nabla \phi \Vert_{\infty} \lesssim 1$. Combining these results completes the proof. 
% We consider the term $\nabla I(\hat \phi_h,\phi_h)$. We introduce, for $x \in T$, the sets of nodes $N_1(x):=N(T) \cap N(\TGamma)$, $N_2(x):=N(T)\cap \big(N(\TGammaplus)\setminus N(\TGamma)\big)$ (the latter set may be empty). Note that $N_1(x)\cup N_2(x)=N(T)$ and
% \begin{align*}
%  \nabla I(\hat \phi_h,\phi_h)(x) &= \sum_{x_i \in N_1(x)} \hphi(x_i)\nabla  \psi_i(x) + \sum_{x_i \in N_2(x)} \phi_h(x_i)\nabla  \psi_i(x) \\
%   & = \sum_{x_i \in N(T)} \phi(x_i) \nabla \psi_i(x) +  \sum_{x_i \in N_1(x)} \big(\hphi(x_i)- \phi(x_i)\big)\nabla  \psi_i(x)  \\ & + \sum_{x_i \in N_2(x)}\big( \phi_h(x_i)- \phi(x_i)\big)\nabla  \psi_i(x). 
% \end{align*} 
% The first term can be rewritten as $\nabla (I_k \phi)(x)$, hence is uniformly bounded by a constant. For the other two terms we use $\|\hat \phi_h - \phi\|_{\infty,U} \lesssim h^2,~\|\phi_h - \phi\|_{\infty,U} \lesssim h^{k+1}$, $\|\nabla \psi_i\|_{\infty, U} \lesssim h^{-1}$, hence these terms are bounded by $ch$ and $ch^{k}$, respectively. This completes the proof of \eqref{resd6}. 
%\end{proof}

\subsection{Proof of Lemma~\ref{lem3}} \label{proof:lem3}
Recall that $\Psi^\Gamma(x)= x+ d(x) G(x)$, $\Psi_h^\Gamma(x)=x+d_h(x)G_h(x)$. 
 We have $\|d\|_{\infty,\OGamma} \lesssim h^2,~ \max_{T \in \TGamma} \|d_h\|_{\infty,T} \lesssim h^2$, cf.~\eqref{resd4a}, \eqref{resd4}. Take $x \in T \in \TGamma$.
 We start with the triangle inequality
\begin{equation} \label{ttt} \begin{split}
  \|(\Psi^\Gamma - \Psi_h^\Gamma)(x)\|_2 & = \| d(x) G(x)- d_h(x)G_h(x)\|_2 \\
 & \leq |d(x)-d_h(x)| \|G\|_{\infty,\OGamma} + |d_h(x)| \, \|G_h(x)-G(x)\|_2.
\end{split}
\end{equation}
The second term on the right-hand side is uniformly  bounded by $h^{k+2}$ due to the result in  Lemma~\ref{lem2}.
We have by construction
\begin{equation}\label{phiphih}
  \phi(x+ d(x) G(x))= \hat\phi_h(x)=(\mathcal{E}_T\phi_{h}) (x+ d_h(x) G_h(x)), ~~x \in  T \in \TGamma.
\end{equation}
Using this and \eqref{eq1} we get
\begin{align*}
 |d(x)-d_h(x)| & \sim \big| \phi(x+ d(x) G(x)) -\phi(x+ d_h(x) G(x))\big| \\ & =\big| (\mathcal{E}_T \phi_{h})(x+ d_h(x) G_h(x))-\phi(x+ d_h(x) G(x))\big| \\
& \leq \big|  (\mathcal{E}_T \phi_{h}) (x+ d_h(x) G_h(x))-\phi(x+ d_h(x)  G_h(x))\big| \\ & \quad +\big|\phi(x+ d_h(x) G_h(x))- \phi(x+ d_h(x) G(x))\big|.
\end{align*}
Using the estimate in \eqref{Taylor} it follows that the first term on the right-hand side is uniformly bounded by $h^{k+1}$.
The second term on the right-hand side is uniformly bounded by
\[
  \big|\phi(x+ d_h(x)  G_h(x))- \phi(x+ d_h(x) G(x))\big| \lesssim |d_h(x)| \, \|G_h(x)- G(x)\|_2 \lesssim h^{k+2}.
\]
Collecting these results we obtain
\begin{equation} \label{NN}
 \max_{T \in \TGamma} \|d-d_h\|_{\infty,T} \lesssim h^{k+1}, 
\end{equation}
which yields the desired bound for the first term on the right-hand side in \eqref{ttt}. Hence the bound for the first term in \eqref{estPsi} is proven.
 It remains to bound the derivatives. 
We take  the derivative in the equation \eqref{phiphih}. To simpifly the notation we drop the argument $x$ and set $y:=x+d(x)G(x)$, $y_h:=x+d_h(x)G_h(x)$. This yields
\begin{equation} \label{kj} \begin{split}
& \nabla \phi(y) + \nabla d  \nabla \phi(y)^T \nabla \phi  + d DG^T \nabla \phi(y) \\ & = \nabla(\mathcal{E}_T\phi_h)(y_h) + \nabla d_h \nabla (\mathcal{E}_T\phi_h)(y_h)^T G_h + d_h DG_h^T \nabla (\mathcal{E}_T\phi_h)(y_h).
\end{split}
\end{equation}
Using a Taylor expansion as in \eqref{Taylor} we get
\begin{equation} \label{Tay}
  \|\nabla(\mathcal{E}_T\phi_h)(y_h)-\nabla \phi(y_h)\|_2 \lesssim h^k.
\end{equation}
Furthermore, $\|\nabla \phi(y)- \nabla \phi(x)\|_2 \lesssim h^2$, $ \|\nabla \phi(y_h)- \nabla \phi(x)\|_2 \lesssim h^2$ and $\|G_h(x)-\nabla \phi(x)\|_2 \lesssim h$.
Using these results and \eqref{eqlem2} and rearranging terms in \eqref{kj} we get
\begin{align*}
  & \big( \nabla d - \nabla d_h\big)\big(\|\nabla \phi\|_2^2 + \mathcal{O}(h)\big) \\ &  = \big( \nabla (\mathcal{E}_T\phi_h)(y_h)- \nabla \phi(y)\big) +\big( d_h DG_h^T \nabla(\mathcal{E}_T\phi_h)(y_h) - d DG^T \nabla \phi(y) \big) .
\end{align*}
Using  \eqref{eqlem2} and \eqref{NN} we get $|y-y_h| \lesssim h^{k+1}$. Using this and \eqref{Tay} we obtain for the first term on the right-hand side the uniform bound $c h^k$.  For the second term we obtain the same uniform bound  by using \eqref{Tay}, \eqref{NN}, \eqref{eqlem2} and $\max_{T \in \T}\|d_h\|_{\infty,T} \lesssim h^2$, $\|d\|_{\infty,\OGamma} \lesssim h^2$. This yields
\begin{equation} \label{re1}
 \max_{T \in \TGamma} \|\nabla d - \nabla d_h\|_{\infty,T} \lesssim h^k.
\end{equation}

Using this, \eqref{NN} and the results in  Lemma~\ref{lem2} and Lemma~\ref{propertiesdh} we obtain the estimates
\begin{align*}
 & \Vert D  (\Psi^\Gamma - \Psi_h^\Gamma) \Vert_{\infty,T}  = \Vert D (d G - d_h G_h) \Vert_{\infty,T} \\
& \lesssim \Vert \nabla (d-d_h) \Vert_{\infty, T} \Vert G \Vert_{\infty,T} + \Vert \nabla d_h \Vert_{\infty, T} \Vert G - G_h \Vert_{\infty, T} \\
 & + \|d-d_h\|_{\infty, T} \Vert D G \Vert_{\infty,T} + \|d_h\|_{\infty, T} \Vert D(G-G_h) \Vert_{\infty, T}  \lesssim h^k,
\end{align*}
which are uniform in $T \in \TGamma$. 
%\end{proof}

\subsection{Proof of Lemma~\ref{lem:lenoirext}} \label{proof:lenoirext}
%  We prove the estimate for each part in the extension, cf. \eqref{eq:lenoirextref}, $l=2,..,k$, i.e. we prove 
%  $$ h^n \Vert D^n  \mathcal{E}^{F\rightarrow T}_l w \Vert_{\infty,T} \lesssim \sum_{r=n}^{k+1} h^r \Vert D^r w \Vert_{\infty, F},$$
Recall that $ \mathcal{E}^{F\rightarrow T} w := \mathcal{E}^{\hat F\rightarrow \hat T} (w \circ \hat \Phi_F^{-1}) \circ \Phi_T^{-1}$, $\hat{w} = w \circ \hat \Phi_F^{-1}$. We transform from $T$,$F$ to the corresponding reference element and face, and use 
$$
  \Vert D^n \mathcal{E}^{F \rightarrow T} w \Vert_{\infty, T} \lesssim
h^{-n}  \Vert D^n \mathcal{E}^{\hat F \rightarrow \hat T}  \hat w \Vert_{\infty, \hat{T}}, \quad 
  \Vert D^r \hat{w} \Vert_{\infty, \hat{F}} \lesssim
  h^r \Vert D^r w \Vert_{\infty, F}.
  $$
From this it follows that it suffices to prove
\begin{equation} \label{transfeq}
 \Vert D^n \mathcal{E}^{\hat F\rightarrow \hat T} \hat{w}_T \Vert_{\infty,\hat{T}} \lesssim \sum_{r=n}^{k+1} \Vert D^r \hat{w} \Vert_{\infty, \hat{F}}, \quad \forall \hat w \in C_0^{k+1}(\hat F), ~n=0,\ldots,k+1.
\end{equation}
Recall that
\begin{equation} \label{defE}
  \mathcal{E}^{\hat{F}\rightarrow \hat{T}} \hat w :=\omega^{k+1} A_k^\ast(\hat w)\circ Z+ \sum_{l=2}^k \omega^l  A_l( \hat{w}) \circ Z,
\end{equation} 
with $A_l^\ast := \id - \Lambda_l, \quad  A_l := \Lambda_l - \Lambda_{l-1} = - A_{l}^\ast + A_{l-1}^\ast$, $\Lambda_l : C(\hat{F}) \rightarrow \mathcal{P}^l(\hat{F})$ the interpolation operator, and $\omega$,$Z$ as in \eqref{defomega}.  
%where $ \mathcal{E}^{\hat F\rightarrow \hat T}_l \hat w := \omega^l \cdot A_l (\hat w) \circ Z$ and 
%$ \mathcal{E}^{F\rightarrow T}_l w := \mathcal{E}^{\hat F\rightarrow \hat T}_l (w \circ \hat \Phi_F^{-1}) \circ \Phi_T^{-1}$, $\hat{w} = w \circ \hat \Phi_F^{-1}$. We note that $D^n \mathcal{E}_l^{F \rightarrow T} w = 0$ for $n > l$, i.e. we consider only $l \geq n$.
The proof uses  techniques also used in \cite{bernardi1989optimal}. 
% $(\omega^l \cdot A_l (w \circ \Phi_F^{-1}) \circ Z ) \circ \hat \Phi_T^{-1})$.
 We apply a standard Bramble-Hilbert argument, which yields
 \begin{equation}\label{alstar}
    \Vert D^r (A_l^\ast \hat{w}) \Vert_{\infty,\hat{F}}  \lesssim  \Vert D^{l+1} \hat{w} \Vert_{\infty,\hat{F}}, \quad 0\leq r \leq l+1 \leq k+1,
  \end{equation}
and thus, by a triangle inequality,
\begin{equation}\label{lstar}
    \Vert D^r (A_l \hat{w}) \Vert_{\infty,\hat{F}}  \lesssim  \Vert D^{l+1} \hat{w} \Vert_{\infty,\hat{F}} +  \Vert D^{l} \hat{w} \Vert_{\infty,\hat{F}}, \quad 0\leq r \leq l+1 \leq k+1,~~l \geq 2.
  \end{equation}
  For $Z$ we have $\Vert D^m Z \Vert_{\infty,\hat{T}} \lesssim \omega^{-m},~m \geq 1$, cf. \cite[Lemma 6.1]{bernardi1989optimal}.
  With a higher order multivariate chain rule, cf. \cite{ciarlet1972interpolation}, we further have
% \begin{align}
%   \Vert D^m (\hat w \circ Z) \Vert_{\infty,\hat{T}} & \lesssim \sum_{r=1}^m \underbrace{ \Vert D^r \hat w \Vert_{\infty,\hat{F}}}_{\lesssim h^r} \sum_{\alpha \in \mathcal{J}(r,m)} \underbrace{\prod_{q=1}^m \Vert D^q Z \Vert^{\alpha_q} }_{\lesssim \omega^{-m}} \lesssim h \omega^{-m} \\
%     \text{ where } \quad \mathcal{J}(r,m) & := \{ \alpha = (\alpha_1,..,\alpha_r) \mid \sum_{q=1}^m \alpha_q = r, \sum_{q=1}^m q \alpha_q = m\}. \nonumber
%  \end{align}
% Similarly, we obtain
\begin{align}
  \Vert D^m A_l^\ast(\hat{w}) \circ Z \Vert_{\infty,\hat{T}}
    & \lesssim
    \sum_{r=1}^m \underbrace{ \Vert D^r A_l^\ast(\hat{w}) \Vert_{\infty,\hat{F}}}_{ \lesssim \Vert D^{l+1} \hat{w} \Vert_{\infty, \hat{F}}} \sum_{\alpha \in \mathcal{J}(r,m)} \underbrace{\prod_{q=1}^m \Vert D^q Z \Vert^{\alpha_q} }_{\omega^{-m}} \\
  & \lesssim \Vert D^{l+1}\hat w \Vert_{\infty,\hat{F}} \, \omega^{-m}, \quad 1 \leq m \leq l+1\leq k+1, \label{kll} \\
     \text{ where } \quad \mathcal{J}(r,m) & := \{ \alpha = (\alpha_1,..,\alpha_r) \mid \sum_{q=1}^m \alpha_q = r, \sum_{q=1}^m q \alpha_q = m\}, \nonumber    
\end{align}
and for $m=0$ 
$$
  \Vert A_l^\ast(\hat{w}) \circ Z \Vert_{\infty,\hat{T}}
 =   \Vert A_l^\ast(\hat{w}) \Vert_{\infty,\hat{F}}
 \lesssim   \Vert D^{l+1} \hat{w} \Vert_{\infty,\hat{F}}.
$$
Similarly, using $D^mA_l(\hat w)=0$ for $m >l$, 
\[
 \Vert D^m A_l (\hat{w}) \circ Z \Vert_{\infty,\hat{T}} \lesssim \left( \Vert D^{l+1} \hat{w} \Vert_{\infty, \hat{F}} + \Vert D^{l} \hat{w} \Vert_{\infty, \hat{F}} \right) \omega^{-m},~ 0 \leq m, l\leq k+1, ~l \geq 2. %\text{cf. \cite[Lemma 6.3 and Lemma 6.5]{bernardi1989optimal}.}
\]
Using a Bramble-Hilbert argument, Leibniz formula and the bound derived in \eqref{kll} we get, for $n \leq k+1$,
\begin{align*}
 \|D^n(\omega^{k+1} A_k^\ast(\hat w)\circ Z)\|_{\infty,\hat F} 
 & \lesssim  \sum_{r=0}^{n} \omega^{k+1-r} \underbrace{|D \omega|^r}_{\lesssim 1} \Vert D^{n-r} (A_k^\ast(\hat w \circ Z)) \Vert_{\infty,\hat T} \\
& \lesssim  \sum_{r=0}^{k+1} \omega^{k+1-r}  \|D^{k+1}\hat w\|_{\infty,\hat F}\,\omega^{-(n-r)} \lesssim \|D^{k+1}\hat w\|_{\infty,\hat F},
\end{align*}
which yields the desired bound for the first term on the right hand-side in \eqref{defE}. We now consider one term $\omega^l A_l(\hat w)\circ Z$, $2 \leq l \leq k$, from the second term on the right hand-side in \eqref{defE}. This is a polynomial of degree $l$, hence $D^n(\omega^l A_l(\hat w)\circ Z) =0$ for $n >l$. We have to consider only $n \leq l \leq k$. 
With similar arguments as above we get:
\begin{align}
  \Vert D^n (\omega^l A_l(\hat w)\circ Z) \Vert_{\infty,\hat T} &
\lesssim \sum_{r=0}^n \omega^{l-r} |D \omega|^r \Vert D^{n-r} (A_l(\hat w \circ Z)) \Vert_{\infty,\hat T} \\ &\lesssim \sum_{r=0}^{k+1} \underbrace{\omega^{l-r} \omega^{-(n-r)}}_{\lesssim 1~~ (l \geq n)} ( \Vert D^l \hat w \Vert_{\infty,\hat F} +  \Vert D^{l+1} \hat w \Vert_{\infty,\hat F} ).
\end{align}
% For $l < n$ we have 
% With this bound we have $^n \mathcal{E}^{\hat F \rightarrow \hat T}_l w = 0$ which concludes the proof.
% \begin{equation}
%   \Vert D^m \mathcal{E}^{\hat F\rightarrow \hat T} \hat{w} \Vert_{\infty,\hat{T}} \lesssim \sum_{l=\max\{2,m\}}^k h^l \lesssim h^m, 
% \end{equation}
% with which we conclude the proof.
%\end{proof}
Summing over $l=2, \ldots, k$ completes the proof.

\subsection{Proof of Lemma~\ref{lemconsist}} \label{proof:lemconsist}
 We first consider the terms $a_h(\cdot,\cdot)$, $a(\cdot,\cdot)$ in $A_h(\cdot,\cdot)$ and $A(\cdot,\cdot)$, respectively, 
 split the bilinear forms into its contributions from the two subdomains, cf.~\eqref{atrans}, for instance $a_h(\cdot,\cdot) = a_h^1(\cdot,\cdot) + a_h^2(\cdot,\cdot)$, and consider one part $a_h^i(\cdot,\cdot)$, $i=1,2$.
We introduce $\tilde w_h:= w_h \circ \Phi_h^{-1}$ and $\tilde u:= u \circ \Phi_h$ and compare the reference formulation without geometrical errors
\begin{align*}
a^i(u, \tilde w_h) &= \alpha_i \int_{\Omega_i} \nabla u \cdot \nabla \tilde w_h \, dx  = \alpha_i \int_{\Omega_{i,h}} \det(D \Phi_h)  
D \Phi_h^{-T}  \nabla \tilde u \cdot
D \Phi_h^{-T}  \nabla w_h \, dy
\end{align*}
with the discrete variant 
\begin{align*}
a_h^i(\tilde u ,w_h)
& = \alpha_i \int_{\Omega_{i,h}} 
\nabla \tilde u \cdot
\nabla w_h \, dy.
\end{align*}
From this we get
\[
 |a^i(u, \tilde w_h) - a_h^i(\tilde u ,w_h)| = \alpha_i \big| \int_{\Omega_{i,h}} \nabla \tilde u^T \cdot C \nabla w_h \, dy\big| \lesssim \|C\|_{\infty,\Omega} \|\tilde u\|_{H^1(\Omega_{i,h})}\|w_h\|_h,
\]
with $C:= \det(A) (A^TA)^{-1}-I$, $A:=D\Phi_h$. From Lemma ~\ref{lem:phihbounds} we have
$
 \|A-I\|_{\infty,\Omega} \lesssim h^k \text{ and } \|A^{-1}-I\|_{\infty,\Omega} \lesssim h^k, 
$
which implies $\Vert \mathrm{det}(A) - 1 \Vert_{\infty,\Omega} \lesssim h^k$. We thus obtain  the bound
$$
 \Vert C \Vert_{\infty,\Omega} \lesssim \Vert \mathrm{det}(A) - 1 \Vert_{\infty,\Omega} + \Vert A - I \Vert_{\infty,\Omega} + \Vert A^{-1} - I \Vert_{\infty,\Omega} \lesssim h^k.
$$
Combining these estimates results in
\begin{equation} \label{est1} 
 |a^i(u, \tilde w_h) - a_h^i(\tilde u ,w_h)| \lesssim h^k \Vert \tilde u \Vert_{H^1(\Omega_{i,h})} \Vert w_h \Vert_h.
\end{equation}
With similar arguments we  obtain the estimate 
$$\Vert \tilde u \Vert_{H^{1}(\Omega_{i,h})} \sim \Vert u \Vert_{H^{1}(\Omega_i)}.$$

We now consider the terms $N_h^c(\tilde u, w_h)$ and $N^c(\tilde u, w_h)$. 
We use a relation between $ n_{\Gamma_h}(x)=:n_{\Gamma_h}$ and $n_\Gamma(\Phi_h(x))$, $x \in \Gamma_h \cap \thetah(T)=\thetah(\Gammalin_T)$. Let $t_1, \ldots, t_{d-1}$ be an orthonormal basis of $n_{\Gamma_h}^\perp$. The tangent space to $\Gamma= \Phi_h(\Gamma_h)$ at $\Phi_h(x)$ is given by ${\rm span}\{\, D\Phi_h(x)t_j~|~1 \leq j \leq d-1\,\}=n_\Gamma(\Phi_h(x))^\perp$. Hence,
$n_\Gamma(\Phi_h(x)) \in {\rm span}\{\, D\Phi_h(x)^{-T}n_{\Gamma_h}\,\}$ holds. This yields
\begin{equation} \label{traans} n_\Gamma \circ \Phi_h = \frac{D\Phi_h^{-T} n_{\Gamma_h}}{\Vert D\Phi_h^{-T} n_{\Gamma_h} \Vert_2}.
\end{equation}
In the transformation from $\Gamma_h$ to $\Gamma$ we get the factor $\det(D\Phi_h) \Vert D\Phi_h^{-T} n_{\Gamma_h} \Vert_2$ from the change in the measure, cf.~\eqref{changemeas}. The term  $\Vert D\Phi_h^{-T} n_{\Gamma_h} \Vert_2$ is the same as  the denominator in the  change of normal directions \eqref{traans}. Therefore these terms cancel and simplify the formulas below. With the same matrix $C$ as used above, we get 
\begin{align}
  & \big|  N^c(u, \tilde w_h) - N_h^c(\tilde u, w_h) \big| \nonumber \\
 & = \big|\int_{\Gamma_h} \det (D \Phi_h) \, D\Phi_h^{-T} \average{-\alpha \nabla \tilde u } \cdot D\Phi_h^{-T} n_{\Gamma_h} \  \spacejump{w_h} \,  ds -
 \int_{\Gamma_h} \average{-\alpha \nabla \tilde u } \cdot n_{\Gamma_h} \  \spacejump{w_h} \,  ds\big| \nonumber \\
 & = \big| \int_{\Gamma_h} n_{\Gamma_h}^T \cdot C  \average{-\alpha \nabla \tilde u } \spacejump{w_h} \,  ds\big| \lesssim  \|C\|_{\infty,\Omega} \|\average{-\alpha \nabla \tilde u }\|_{-\frac12,h,\Gamma_h} \|\spacejump{w_h}\|_{\frac12,h,\Gamma_h}  \nonumber \\
& \lesssim \|C\|_{\infty,\Omega} 
\|\average{-\alpha \nabla u }\|_{-\frac12,h,\Gamma}
 \|w_h\|_h \lesssim h^{k+\frac12} \|u\|_{H^2(\Omega_1 \cup \Omega_2)}\|w_h\|_h. \label{pl}
\end{align} 
Finally we use $\spacejump{\tilde u} = 0$ at $\Gamma_h$ to note that $N_h^c(w_h, \tilde{u})=N_h^s(\tilde{u},w_h)=N^c(\tilde w_h, u)=N^s(u, \tilde w_h)=0$. Combining this with the bounds in \eqref{est1}, \eqref{pl} and the definition of $A_h(\cdot,\cdot)$ and $A(\cdot,\cdot)$ we obtain the bound in \eqref{p1}.

For the other consistency term in the Strang estimate we obtain:
\begin{align*}
 |f(\tilde w_h)-f_h(w_h)| 
& \leq  \sum_{i=1,2} \left| \int_{\Omega_{i}}  f_i (w_h\circ \Phi_h^{-1}) \, dx - \int_{\Omega_{i,h}} f_{i,h} w_h\, dx \right|,
\end{align*}
and for $i=1,2$ we get, using $f_{i,h}=f_i$ on $\Omega_i$,
\begin{align*}
\hspace*{2cm} & \hspace*{-2cm} \left| \int_{\Omega_{i}}  f_i (w_h\circ \Phi_h^{-1}) \, dx - \int_{\Omega_{i,h}} f_{i,h} w_h\, dx \right| \\
& \leq |\int_{\Omega_{i,h}} (\det(D\Phi_h) (f_{i,h} \circ \Phi_h)  - f_{i,h}) w_h \, dx | \\
& \lesssim \|f_{i,h} \circ \Phi_h - f_{i,h} \|_{\infty,\Omega_{i,h}} \|w_h\|_h + \Vert \det(D\Phi_h) -1 \Vert_{\infty,\Omega_{i,h}} \|f_i\|_{\infty,\Omega_i} \|w_h\|_h.
\end{align*}
Finally note that $ \Vert \det(D\Phi_h) -1 \Vert_{\infty,\Omega} \lesssim h^k$ and $\|f_h\|_{H^{1,\infty}(\Omega_{1,h}\cup \Omega_{2,h})} \lesssim \|f\|_{H^{1,\infty}(\Omega_1 \cup \Omega_2)}$, and thus we get:
\[
   \|f_{i,h} \circ \Phi_h - f_{i,h}\|_{\infty,\Omega_{i,h}} \leq \|f_{i,h}\|_{H^{1,\infty}(\Omega_{i,h})} \|\Phi_h - \id\|_{\infty, \Omega} \lesssim \|f\|_{H^{1,\infty}(\Omega_1\cup\Omega_2)} h^{k+1}
\]
which completes the proof of \eqref{p2}.

\bibliographystyle{siam}
\bibliography{literature}

\end{document}